\documentclass[a4paper]{amsart}
\usepackage{a4}
\usepackage{amsmath}
\usepackage{amssymb}
\usepackage[applemac]{inputenc}
\usepackage{enumerate}
\usepackage{graphicx}
\usepackage{color}
\usepackage{multirow}
\usepackage{subcaption}
\usepackage{array}
\topmargin -1.0cm
\oddsidemargin 0in
\evensidemargin 0in
\textwidth 6.3 truein
\textheight 9.1 truein






\newcommand{\Tr}{ \mbox{{\rm Tr}}}



\newcommand{\uo}{{\overline u}}
\newcommand{\uu}{{\underline u}}

\newcommand{\PP}{ \mathbb{P}}




\newcommand{\EE}{{\mathbb E}}

\newcommand{\ds}{\displaystyle}

\newcommand{\T}{{\mathbb{T}}}

\PassOptionsToPackage{normalem}{ulem}
\usepackage{ulem}


\newtheorem{remark}{\textbf{Remark}}[section]

\newtheorem{lemma}{\textbf{Lemma}}[section]
\newtheorem{theorem}{\textbf{Theorem}}[section]

\newtheorem{proposition}{\textbf{Proposition}}[section]

\newtheorem{definition}{\textbf{Definition}}[section]

\numberwithin{equation}{section}

\title[A semi-Lagrangian scheme for HJB equations with oblique boundary conditions]{A semi-Lagrangian scheme for Hamilton-Jacobi-Bellman equations with oblique  boundary conditions} 

\author{Elisa Calzola \and Elisabetta Carlini \and Xavier Dupuis \and Francisco J. Silva}

\thanks{Dipartimento di Matematica  Guido Castelnuovo, Sapienza  Universit{\`a} di Roma, piazza Aldo Moro}
 
\thanks{Institut de Math\'ematiques de Bourgogne, UMR 5584 CNRS, Universit\'e Bourgogne Franche-Comt\'e, 21000 Dijon, France. 
              \email{xavier.dupuis@u-bourgogne.fr}}
              
\thanks{Institut de recherche XLIM-DMI, UMR-CNRS 7252, Facult\'e des Sciences et Techniques, 
Universit\'e de Limoges, 87060 Limoges, France \email{francisco.silva@unilim.fr}}
\newcommand{\Dt}{{\Delta  t}} 
\newcommand{\Dx}{{\Delta  x}}




\newcommand{\ov}[1]{\overline{#1}}

\def\weight(#1,#2){c_{#1,#2}}

\def\ra{{\rm ra}}





%
 \if {
  


 } \fi

\if {

}{{\caption}}
} \fi
















\def\G{\mathcal{G}}
\def\H{\mathcal{H}}
\def\I{\mathcal{I}}
\def\J{\mathcal{J}}

\def\L{\mathcal{L}}

\def\SS{\mathcal{S}}
\def\T{\mathcal{T}}

\def\V{\mathcal{V}}




%

\def\eps{\varepsilon}


\newcommand{\floor}[1]{\lfloor #1 \rfloor}


\def\half{\mbox{$\frac{1}{2}$}}

\def\1B{{\bf  1}}

\newcommand{\NN}{\mathbb{N}}

\newcommand{\OO}{\mathcal{O}}
\newcommand{\RR}{\mathbb{R}}

\def\II{\mathbb{I}}
\def\EE{\mathbb{E}}
\def\PP{\mathbb{P}}


\newcommand\be{\begin{equation}}
\newcommand\ee{\end{equation}}
\newcommand\ba{\begin{array}}
\newcommand\ea{\end{array}}
\newcommand{\bean}{\begin{eqnarray*}}
\newcommand{\eean}{\end{eqnarray*}}




\def\ds{\displaystyle}

\def\la{\langle}
\def\ra{\rangle}


\begin{document}

\begin{abstract}
We investigate in this work a fully-discrete semi-Lagrangian approximation of second order possibly degenerate Hamilton-Jacobi-Bellman (HJB) equations on a  bounded domain $\OO\subset \RR^N$ with oblique boundary conditions. These equations appear naturally in the study of optimal control of diffusion processes with oblique reflection at the boundary of the domain. 

The proposed scheme is shown to satisfy a consistency type property, it is monotone and stable. Our main result is the convergence of the numerical solution towards the unique viscosity solution of the HJB equation. The convergence result holds under the same asymptotic relation between the time and space discretization steps as in the classical setting for semi-Lagrangian schemes  on $\OO=\RR^{N}$. We present  some numerical results that confirm the numerical convergence of the scheme. \\[0.5ex]

{\small
\noindent {\bf AMS-Subject Classification:}  49L25 \and 65M12 \and  35K55 \and 49L20 \\[0.5ex]

\noindent {\bf Keywords:} HJB equations, oblique boundary conditions \and semi-Lagrangian scheme \and consistency \and stability \and convergence and numerical results.
}

\end{abstract}

\maketitle
\section{Introduction} \label{Sect_Intro} 

In this work we deal  with the numerical approximation of the following parabolic Hamilton-Jacobi-Bellman (HJB) equation 
\be\label{HJB_principal_equation}\ba{rcl}  \partial_t u  + H\left(t,x, Du , D^{2}u \right)&=&0 \hspace{0.3cm} \mbox{in } \; (0,T]\times \OO, \\[6pt]
L(t, x, D u)&=&0  \hspace{0.3cm} \mbox{on } \; (0,T] \times \partial \OO,\\[6pt]
u(0,x)&=&\Psi(x)  \hspace{0.4cm} \mbox{in  } \ov{\OO}.
\ea 
\ee
In the system above,  $T>0$, $\OO\subset \RR^N$ is a nonempty smooth bounded open set and $H$ and $L$ are nonlinear functions having the form   
\begin{eqnarray}
H(t,x,p,M)&=&\sup_{a\in A}\left\{-\frac{1}{2} \Tr\left( \sigma(t,x,a)\sigma(t,x,a)^\top M \right) - \langle \mu (t,x,a),p \rangle - f(t,x,a)\right\},\label{H_sup}\\
L(t,x,p) &=&\sup_{b\in B}\left\{\langle \gamma(x,b),p \rangle - g(t,x,b)\right\}, \label{B_sup}
\end{eqnarray} 
where $A\subset \RR^{N_{A}}$ and $B\subset \RR^{N_{B}}$ are nonempty compact sets, $\sigma: [0,T]\times \ov{\OO} \times A \to \RR^{N \times N_{\sigma}}$, with $1\leq N_{\sigma}\leq N$,  $\mu:  [0,T]\times \ov{\OO} \times A \to \RR^{N}$, $f:  [0,T]\times \ov{\OO} \times A \to \RR$, $\gamma: \partial \OO \times \V \to \RR^N$, with $\V\subseteq \RR^{N_{B}}$ being an open set containing $B$,  $g: [0,T]\times \partial{\OO}\times B \to \RR$, and $\Psi:\ov{\OO}\to \RR$.

If $A=\{a\}$ and $B=\{b\}$, for some $a\in \RR^{N_{A}}$ and $b\in \RR^{N_{B}}$, and $\gamma(x,b)=n(x)$, with $n(x)$ being the unit outward normal vector  to $\ov{\OO}$ at $x\in \partial \OO$, then \eqref{HJB_principal_equation} reduces to a standard linear parabolic equation with Neumann boundary conditions. In the general case, and after a simple change of the time variable in order to write \eqref{HJB_principal_equation} in backward form, the HJB equation \eqref{HJB_principal_equation} appears in the study of optimal control of diffusion processes with controlled reflection on the boundary $\partial \OO$ (see e.g. \cite{Lions1985} for the first order case, i.e. $\sigma\equiv 0$, and \cite{MR709164,MR2421330} for the general  case). Since the HJB equation  \eqref{HJB_principal_equation} is possibly degenerate parabolic,  one cannot expect the existence of classical solutions and we have to rely on the notion of viscosity solution (see e.g. \cite{CrandallIshiiLions92}). Moreover, as it has been noticed in \cite{MR667669,Lions1985}, in general the boundary condition in \eqref{HJB_principal_equation}  does not hold in the pointwise sense and we have to consider a suitable weak formulation of it. We refer the reader to \cite{Lions1985,MR1090787} and \cite{CrandallIshiiLions92,Barles1993,MR1685618,MR2070626,MR2399437}, respectively, for well-posedness results for HJB equations with oblique boundary condition in the first and second order cases. 

The study of the numerical approximation of solutions to HJB and, more generally, fully nonlinear second order Partial Differential Equations (PDEs), has made important progress over the last few decades. Most of the related literature consider the case where $\OO=\RR^N$, or where a Dirichlet boundary condition is imposed on the boundary $\partial \OO$.   We refer the reader to \cite{falconeferretilibro,MR3049920,MR3653852}  and the references therein for  the state of the art on this topic. By contrast, the numerical approximation of solutions to \eqref{HJB_principal_equation} has been much less explored. Indeed, to the best of our knowledge only the methods in \cite{MR1181342,MR2034614} can be applied to approximate \eqref{HJB_principal_equation} in the particular  first order case ($\sigma\equiv 0$). Moreover, in \cite{MR1181342}, where a finite difference scheme is proposed,  the function defining the boundary condition has the particular form $L(t,x,p,b)= \langle n(x), p\rangle$. On the other hand, both references consider Hamiltonians which are not necessarily convex with respect to $p$. Let us also mention the reference \cite{AF12}, where, in the context of mean curvature motion with nonlinear Neumann boundary conditions, the authors propose a discretization that combines a  Semi-Lagrangian (SL) scheme in the main part of the domain with a finite difference scheme near the boundary.

The main purpose of this article is to provide a consistent, stable, monotone and convergent SL scheme to approximate the unique viscosity solution to \eqref{HJB_principal_equation}. By the results in \cite{Barles1993}, the latter is well-posed in $C([0,T]\times\ov{\OO})$ under the assumptions in Sect.~\ref{prelim_results} below. Semi-Lagrangian schemes to approximate the solution  to \eqref{HJB_principal_equation} when  $\OO=\RR^N$ (see e.g. \cite{CamFal95,MR3042570}) can be derived from the optimal control interpretation of \eqref{HJB_principal_equation} and a suitable discretization of the underlying controlled trajectories. These schemes enjoy the feature that they are explicit and stable under an inverse Courant-Friedrichs-Lewy (CFL) condition and, consequentely, they allow large time steps. A second important feature is that they permit a simple treatement of the possibly degenerate second order term in $H$.  The scheme that we propose for $\OO\neq \RR^N$ preserves these two properties and seems to be the first convergent scheme to approximate \eqref{HJB_principal_equation} with the  rather general  asumptions  in Sect.~\ref{prelim_results}.  In particular, our results cover the stochastic and degenerate case. Consequently, from the stochastic control point of view, our scheme allows to approximate the so-called value function of the optimal control of a controlled diffusion process with possibly oblique reflection on the boundary $\partial \OO$ (see \cite{MR2421330}). The main difficulty in devising such a scheme is to be able to obtain a consistency type property   at points in the space grid which are near the boundary $\partial \OO$ while maintaining the stability. This is achieved by considering a discretization of the underlying controlled diffusion which suitably emulates its reflection at the boundary in the continuous case. We refer the reader to \cite{Milstein96} for a related construction of a semi-discrete in time approximation of a second order non-degenerate linear parabolic equation.

The remainder of this paper is structured as follows. In Sect.~\ref{prelim_results} we state our assumptions, we recall the notion of viscosity solution to  \eqref{HJB_principal_equation} and the well-posedness result. In Sect.~\ref{fully_discrete_scheme} we provide the SL scheme as well as its probabilistic interpretation (in the spirit of \cite{Milstein96}). The latter will play an important role in Sect.~\ref{properties_fully_discrete_scheme}, which is devoted to show a consistency type property and  the stability of the SL scheme. By using the half-relaxed limits technique introduced in \cite{BS91}, we show in Sect.~\ref{convergence_analysis} our main result, which is the convergence of solutions to the SL scheme towards the unique viscosity solution to \eqref{HJB_principal_equation}. The convergence is uniform in $[0,T]\times\ov{\OO}$ and holds under the same asymptotic condition between the space and time steps than in the case $\OO=\RR^N$. Next, in Sect.~\ref{numerical_results} we first illustrate the numerical convergence of the SL scheme in the case of a one-dimensional linear equation with  homogeneous Neumann boundary conditions. In this case the numerical results confirm  that the boundary condition in \eqref{HJB_principal_equation} is not satisfied at every $x\in \partial \OO$, but it is satisfied in the viscosity sense recalled in Sect.~\ref{prelim_results} below. In a second example, we consider a two dimensional degenerate second order nonlinear equation on a circular domain with non-homogeneous Neumann and oblique boundary conditions. In the last example, we consider a two-dimensional non-degenerate nonlinear equation on a non-smooth domain. Due to the lack of regularity of $\partial \OO$, our convergence result does not apply. However, the SL scheme can be successfully applied, which suggests that our theoretical findings could hold for more general domains. This extension as well as the corresponding study in the stationary framework remain as interesting subjects of future research. Finally, we provide in the Appendix of this work some theoretical results concerning oblique projections and the regularity of the distance to $\partial \OO$, which play a key  role in the definition of the scheme and in the proof of its main properties.\medskip

\section*{Acknowledgements}The first two authors would like to thank the Italian Ministry of Instruction, University and Research (MIUR) for supporting this research with funds coming from the PRIN Project $2017$ ($2017$KKJP$4$X entitled ``Innovative numerical methods for evolutionary partial differential equations and applications''). Xavier Dupuis thanks the support by the EIPHI Graduate School (contract
ANR-17-EURE-0002). Elisa Calzola, Elisabetta Carlini and Francisco J. Silva were partially supported by KAUST through the subaward agreement OSR-$2017$-CRG$6$-$3452$.$04$. 

\section{Preliminaries} \label{prelim_results}
As mentioned in the introduction, it will be simpler to describe our approximation scheme when \eqref{HJB_principal_equation} is written in backward form. This can be done by a simple change of the time variable and a possible modification of the time dependency of $H$. Let us set $\OO_T:=[0,T) \times \OO$ and $\ov{\OO}_{T}=[0,T]\times \ov{\OO}$. We consider the HJB equation 
\begin{equation}\label{hjb_continuous}\ba{rcl} -\partial_t u  + H\left(t,x, Du , D^{2}u \right)&=&0 \hspace{0.3cm} \mbox{in } \; \OO_T, \\[6pt]
L(t, x, D u)&=&0  \hspace{0.3cm} \mbox{on } \;  [0,T) \times \partial \OO,\\[6pt]
u(T,x)&=&\Psi(x)  \hspace{0.4cm} \mbox{in  } \ov{\OO},
\ea \tag{\rm HJB}
\end{equation}
where $H$ and $L$ are respectively given by \eqref{H_sup} and \eqref{B_sup}.

For notational convenience, throughout this article, we will write $\gamma_b(x)=\gamma(x,b)$ for all $x\in \partial \OO$ and $b\in B$.  Our standing assumptions for the data in {\rm(HJB)} are the following.
\begin{itemize}
\item[{\bf(H1)}] $\OO\subseteq \RR^{N}$ ($1\leq N\leq 3$) is a nonemtpy, bounded domain with   boundary $\partial \OO$  of class $C^3$.\vspace{0.2cm}
\item[\bf{(H2)}]  The functions $\sigma$,  $\mu$, $f$, $g$ and $\Psi$ are continuous. Moreover, for every $a\in A$, the functions $\sigma(\cdot, \cdot,a)$ and $\mu(\cdot, \cdot, a)$ are Lipschitz continuous, with Lipschitz constants independent of $a\in A$. \vspace{0.2cm}
\item[{\bf(H3)}] The function  $\gamma$ is of class $C^1$. We also assume that
$$(\forall \; (x,b) \in \partial \OO \times B) \quad |\gamma_b(x)|=1 \quad \mbox{and} \quad  \langle n(x), \gamma_b(x)\rangle > 0,
$$
where, for every $x\in \partial \OO$, we recall that $n(x)$ denotes the  unit outward normal vector  to $\ov{\OO}$ at $x$.
\end{itemize}

%
We now recall the notion of viscosity solution to ${\rm(HJB)}$ (see \cite{Barles1993}). We need first to introduce some notation.   Given a bounded function $z: \ov{\OO}_{T} \to \RR$, its upper semicontinuous (resp.  lower semicontinuous) envelope is defined by  
\be\label{relaxed_limits}
(\forall \, (t,x) \in\overline{\OO}_T) \quad z^{*}(t,x):= \underset{ \substack{(s,y)\in \ov{\OO}_T, \\    (s,y) \to (t,x)}}{\limsup} z(s,y) \hspace{0.3cm} \left( \mbox{resp. } \; z_{*}(t,x):=  \underset{ \substack{(s,y)\in \ov{\OO}_T, \\ (s,y) \to (t,x)}}\liminf z(s,y)\right). 
\ee
\begin{definition}\label{def:visc_sol} {\rm[Viscosity solution] }

{\rm(i)} An upper semicontinuous function $u_1: \ov{\OO}_{T}\to \RR$ is a viscosity subsolution to  {\rm(HJB)}  if for any $(t,x)\in \overline{\OO}_T$ and $\phi \in C^{2}(\overline{\OO}_T)$ such that $u_1-\phi$ has a local maximum at $(t,x)$, we have
\be \label{subsolution_inside} 
-\partial_t \phi(t,x) + H(t,x, D\phi(t,x), D^2\phi(t,x)) \leq 0,
\ee
if $(t,x)\in \OO_{T}$,
\be
\min\left\{ -\partial_t \phi(t,x) + H(t,x, D\phi(t,x), D^2\phi(t,x)), L(t,x, D\phi(t,x))  \right\} \leq 0, \label{subsolution_boundary}
\ee
if $(t,x)\in [0,T) \times \partial \OO$ and, 
\be \label{final_time_subsolution}  u_1(t,x)  \leq \Psi(x), \ee 
if  $(t,x) \in \{T\} \times \ov{\OO}$. 

{\rm(ii)} A lower semicontinuous function  $u_2: \ov{\OO}_{T}\to \RR$ is a viscosity supersolution to  {\rm(HJB)} if for any $(t,x)\in \overline{\OO}_T$ and $\phi \in C^{2}(\overline{\OO}_T)$ such that $u_2-\phi$ has a local minimum at $(t,x)$, we have
\be \label{supersolution_inside} 
-\partial_t \phi(t,x) + H(t,x, D\phi(t,x), D^2\phi(t,x)) \geq 0,
\ee
if $(t,x)\in \OO_{T}$,
\be
\max\left\{ -\partial_t \phi(t,x) + H(t,x, D\phi(t,x), D^2\phi(t,x)), L(t,x, D\phi(t,x))  \right\} \geq 0, \label{supersolution_boundary}
\ee
if $(t,x)\in [0,T) \times \partial \OO$ and, 
\be \label{final_time_supersolution} u_2(t,x)  \geq \Psi(x), \ee 
if  $(t,x) \in \{T\} \times \ov{\OO}$. 

{\rm(iii)} A bounded function $u: \ov{\OO}_T \to \RR$ is a viscosity solution to {\rm(HJB)} if $u^*$ and $u_{*}$, defined in \eqref{relaxed_limits}, are, respectively, sub- and supersolutions to {\rm(HJB)}. 
\end{definition}
\begin{remark}\label{final_time_weak_condition} As shown in \cite[Proposition 6]{MR2399437}, relation \eqref{final_time_subsolution} can be replaced by   
\be\label{subsolution_final_time_interior_space}
\min\left\{ -\partial_t \phi(t,x) + H(t,x, D\phi(t,x), D^2\phi(t,x)),  u_1(t,x)-\Psi(x) \right\} \leq 0,  
\ee
if $(t,x)\in \{T\} \times   \OO$,
and   
\be\label{subsolution_final_time_boundary_space}
\min\left\{ -\partial_t \phi(t,x) + H(t,x, D\phi(t,x), D^2\phi(t,x)), L(t,x, D\phi(t,x)), u_1(t,x)-\Psi(x) \right\} \leq 0,
\ee
if $(t,x)\in \{T\} \times \partial \OO$.  Similarly, condition \eqref{final_time_supersolution} can be replaced by
\be 
\max\left\{ -\partial_t \phi(t,x) + H(t,x, D\phi(t,x), D^2\phi(t,x)),  u_2(t,x)-\Psi(x) \right\} \geq 0,
\ee
if $(t,x)\in \{T\} \times   \OO$,  and   
\be
\max\left\{ -\partial_t \phi(t,x) + H(t,x, D\phi(t,x), D^2\phi(t,x)), L(t,x, D\phi(t,x)), u_2(t,x)-\Psi(x) \right\} \geq 0,
\ee
if $(t,x)\in \{T\} \times \partial \OO$. 
\end{remark}
The following well-posedness result  for ${\rm(HJB)}$ has been shown in \cite[Theorem II.1] {Barles1993} (see also \cite{MR2421330}).
\begin{theorem}
Assume {\bf(H1)-(H3)}. Then there exists a unique viscosity solution $u\in C(\ov{\OO})$ to {\rm(HJB)}. 
\end{theorem}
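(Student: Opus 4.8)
The statement combines existence and uniqueness, so the plan is to deduce uniqueness from a comparison principle and then obtain existence (together with the continuity $u\in C(\ov{\OO}_T)$) by Perron's method; as usual for a degenerate parabolic problem everything is carried out at the level of the sub/super jets appearing in Definition~\ref{def:visc_sol}. The core is the comparison principle: \emph{every bounded upper semicontinuous subsolution $u_1$ and every bounded lower semicontinuous supersolution $u_2$ of {\rm(HJB)} satisfy $u_1\le u_2$ on $\ov{\OO}_T$.} Granting this, uniqueness is immediate (apply it to $u^*,u_*$ of a solution, and to two solutions), and it is also the engine of the existence step.

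For the comparison principle I would argue by contradiction, assuming $M:=\sup_{\ov{\OO}_T}(u_1-u_2)>0$. First I would reduce to \emph{strict} sub/supersolutions through a standard perturbation $u_1\rightsquigarrow u_1-\theta\zeta$, with $\zeta$ chosen so as to gain a strictly negative parabolic inequality while preserving the final-time condition \eqref{final_time_subsolution}; the relaxed final-time conditions recalled in Remark~\ref{final_time_weak_condition} are what guarantee that the localized maximum is not spuriously trapped on $\{T\}\times\ov{\OO}$. Then I would double the variables in time and space, maximizing
\[
\Phi(t,s,x,y)=u_1(t,x)-u_2(s,y)-\frac{|x-y|^2}{2\varepsilon}-\frac{(t-s)^2}{2\eta}-(\text{correction})
\]
over $\ov{\OO}_T\times\ov{\OO}_T$ at a point $(\hat t,\hat s,\hx,\hy)$. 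The usual penalization estimates give $|\hx-\hy|^2/\varepsilon\to0$ and convergence of the maximizers to a maximizer of $u_1-u_2$ as $\varepsilon,\eta\to0$; the parabolic theorem on sums (see \cite{CrandallIshiiLions92}) then produces jets $(a_1,p,X)\in\ov{\cP}^{2,+}u_1(\hat t,\hx)$ and $(a_2,p,Y)\in\ov{\cP}^{2,-}u_2(\hat s,\hy)$ with $p\sim(\hx-\hy)/\varepsilon$, $a_1-a_2\to0$, and a matrix inequality $X\le Y$ sharpened by the Hessian of the penalization. At this point assumption~{\bf(H2)} --- the uniform-in-$a$ Lipschitz continuity of $\sigma(\cdot,\cdot,a)$ and $\mu(\cdot,\cdot,a)$ --- is precisely the structure condition needed to bound $H(\hat t,\hx,p,X)-H(\hat s,\hy,p,Y)$ by a quantity vanishing with $\varepsilon,\eta$, contradicting the strict inequalities and forcing $M\le0$.

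The genuine obstacle, and the feature specific to the oblique setting, is that the penalization may push the maximizers $\hx,\hy$ onto $\partial\OO$, where only the relaxed boundary inequalities \eqref{subsolution_boundary} and \eqref{supersolution_boundary} hold; there $p\sim(\hx-\hy)/\varepsilon$ may be large and of the wrong sign relative to the oblique directions, so the alternatives $L\le0$ (subsolution) and $L\ge0$ (supersolution) cannot be discarded for free. The remedy exploits {\bf(H1)} and {\bf(H3)}: since $\partial\OO$ is of class $C^3$ the signed distance $d$ to $\partial\OO$ is $C^3$ in a tubular neighborhood (as established in the Appendix), and by compactness of $B$ together with the strict obliqueness $\langle n(x),\gamma_b(x)\rangle\ge c_0>0$ one can add to $\Phi$ a correction built from $d$ and the field $\gamma_b$ that bends the effective gradient into the obliqueness cone. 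This arranges that at any boundary maximizer $\langle\gamma_b,p\rangle$ is large enough to force $L(\hat t,\hx,p)>0$ (resp.\ $<0$ for the supersolution), excluding the boundary alternatives and reducing the situation to the interior inequalities, after which the argument closes exactly as for $\OO=\RR^N$.

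Finally, once comparison is available, existence and continuity follow by Perron's method: one constructs ordered barriers $\underline w\le\overline w$ from {\bf(H2)}--{\bf(H3)}, sets $u:=\sup\{w:\ w\ \text{subsolution},\ \underline w\le w\le\overline w\}$, checks in the standard way that $u^*$ is a subsolution and $u_*$ a supersolution, and concludes from the comparison principle that $u^*\le u_*$; since $u_*\le u^*$ always, $u=u^*=u_*\in C(\ov{\OO}_T)$ is the unique viscosity solution. The hardest step is clearly the boundary handling in the doubling argument, which is where the $C^3$ regularity of $\partial\OO$ and the strict obliqueness enter decisively. This is essentially the content of \cite[Theorem~II.1]{Barles1993}.
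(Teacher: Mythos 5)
Your proposal is correct and matches the paper's approach: the paper gives no proof of its own but cites \cite[Theorem II.1]{Barles1993}, and Remark~\ref{theoretical_remarks} records exactly the decomposition you use --- uniqueness and continuity from the comparison principle, existence by Perron's method (the paper also notes alternative existence proofs via the convergence of the scheme in Sect.~\ref{convergence_analysis} or via stochastic control). Your sketch of the comparison argument --- doubling of variables with a correction built from the distance function, using the $C^3$ regularity of the boundary and the strict obliqueness in {\bf(H3)} to exclude the boundary alternatives in Definition~\ref{def:visc_sol} --- is precisely the strategy of the cited reference, as you yourself acknowledge.
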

\begin{remark}\label{theoretical_remarks} {\rm(i)} {\rm[Comparison principle and uniqueness]} The existence of at most one solution to {\rm(HJB)} follows from the following comparison principle {\rm(}see \cite[Theorem II.1] {Barles1993} and  also \cite[Proposition 3.4]{MR2421330}{\rm)}. If $u_1: \ov{\OO}_T \to \RR$ is a bounded  viscosity subsolution to {\rm(HJB)} and $u_2:\ov{\OO}_T \to \RR$ is a bounded viscosity supersolution to {\rm(HJB)},  then 
$$u_1 \leq u_2 \quad \mbox{in} \quad \overline{\OO}_T.$$
{\rm(ii)} {\rm[Existence]} Once a comparison principle has been shown, the existence of a solution to {\rm(HJB)} follows usually from the existence of sub- and supersolutions to {\rm(HJB)} and Perron's method. In Sect.~\ref{convergence_analysis}, we construct sub- and supersolutions to {\rm(HJB)} as suitable limits of solutions to the approximation scheme that we present in the next section. Together with the comparison principle, this yields an alternative existence proof of solutions to {\rm(HJB)}.

An different and interesting technique to show the existence of a solution to {\rm(HJB)} is to consider a suitable stochastic optimal control problem, with controlled reflection of the state trajectory at the boundary $\partial \OO$, and to show that the associated value function is a viscosity solution to {\rm(HJB)}. This strategy has been followed in \cite{MR2421330}.
\smallskip\\ 
{\rm(iii)} {\rm[Continuity]} The continuity of the unique viscosity solution to {\rm(HJB)} follows directly from the comparison principle and the continuity properties required in the definition of sub- and supersolutions to {\rm(HJB)}. Notice that, as usual for parabolic problems with Neumann type boundary conditions, we do not require any  compatibility condition between $\Psi$ and the operator $L$ at the boundary $\partial \OO$. 
\end{remark}

\section{The fully discrete scheme}\label{fully_discrete_scheme}
We introduce in this section a fully discrete SL scheme that approximates the unique viscosity solution to $\rm(HJB)$.  Throughout this section, we assume that {\bf(H1)}-{\bf(H3)} are fulfilled.

\subsection{Discretization of the space domain $\OO$}{ 
Let us fix $\Delta x>0$ and consider a polyhedral domain $\OO_{\Delta x}\subseteq\RR^N$ such that 
\be\label{estim_o}
d(\OO,\OO_{\Delta x})=\inf\left\{|x-y| \, | \, x\in\OO, \, y\in \OO_{\Delta x}\right\}\leq C {(\Delta x)}^2,
\ee
for some $C>0$.  A construction of such a domain $\OO_{\Delta x}$ can be found in \cite[Section 3]{barrett_elliott_1986} for $N=2$ or $N=3$, which explain the dimension constraint in {\bf(H1)}. However, the results in the remainder of this article can be extended to $N>3$, provided that a numerical domain $\OO_{\Delta x}$ satisfying \eqref{estim_o} exists. Let $\T_{\Delta x}$ be a triangulation of $\OO_{\Delta x}$  consisting of  simplicial finite elements $\mathsf{T}$ with vertices in
$\G_{\Delta}=\{x_i \; | \; i=1, \hdots, N_{\Delta x}\}$ (for some $ N_{\Delta x}\in \NN$). We assume that $\Delta x$ is the mesh size, i.e. the maximum of the diameters of $\mathsf{T}\in  \T_{\Delta x}$, all the vertices on $\partial \OO_{\Delta x}$  belong to $\partial\OO$,  at most one face of each element $\mathsf{T}\in \T_{\Delta x}$, with vertices on $\partial\OO_{\Delta x}$, intersects   $\partial \OO_{\Delta x}$, and $\T_{\Delta x}$ satisfies the following regularity condition: there exists $\delta\in(0,1)$, independent of $\Dx$, such that  each $\mathsf{T} \in \T_{\Dx}$ is contained in a ball of radius $\Delta x/\delta$ and contains a ball of radius $\delta\Dx$. As in \cite{DeckelnickHinze2007}, we introduce an auxiliary exact triangulation $\widehat \T_{\Delta x}$  of $\overline{\OO}$ with vertices in $\G_{\Delta x}$. The boundary elements of $\widehat \T_{\Delta x}$ are allowed to be curved  and we have $$\ov \OO = \bigcup _{ \widehat{\mathsf{T}} \in \T_{\Delta x}} \widehat{\mathsf{T}}.$$

Denoting by $p_{\mathsf{T}}$ the projection on $\mathsf{T}\in \T_{\Delta x}$, the projection $p_{\Dx}:\ov \OO \rightarrow \ov \OO_{\Dx}\cap \ov \OO$ is defined by
$$
\begin{aligned}p_{\Dx}(x)=p_{\mathsf{T}}(x),\quad &\mbox{if $x\in \widehat{\mathsf{T}}\in \widehat \T_{\Delta x}$ }\\
&\mbox{and the element $\mathsf{T}\in \T_{\Delta x}$ has the same vertices than $\widehat{\mathsf{T}}$.}
\end{aligned}
$$
Set  $\I_{\Delta x}=\{1, \hdots, N_{\Delta x}\}$ and denote by $\{\psi_{i} \, | \, i\in \I_{\Delta x}\}$  the linear finite element $\mathbb{P}_1$ basis function on $\mathcal{T}_{\Delta x}$. More precisely, for each $i\in \I_{\Delta x}$, $\psi_i: \OO_{\Delta x}\to \RR$  is a continuous function, affine on each $\mathsf{T}\in \mathcal{T}_{\Delta x}$,  $0\leq \psi_{i} \leq 1$, $\psi_{i}(x_i)=1$, $\psi_{i}(x_j)=0$  for all $i$, $j\in \I_{\Delta x}$ with $i\neq j$, and $\sum_{i=1}^{N_{\Delta x}}\psi_{i}(x)=1$ for all $x\in \OO_{\Delta x}$.   For any $\phi: \G_{\Delta x}\to \RR$ its linear interpolation $I[\phi]$  on the mesh $\widehat \T_{\Dx}$ is defined by 
\be\label{interpolation} I \left[\phi\right](x):=\sum_{i=1}^{N_{\Delta x}}\psi_{i}({p_{\Dx}}(x)) \phi(x_i),\quad\text{for all $x\in\overline{\OO}$.}\ee
\begin{lemma}\label{Interp_estimate} Let $\phi \in C^2(\ov \OO)$ and denote by $\phi|_{\G_{\Delta x}}$ its restriction to $\G_{\Delta x}$. Then there exists a constant $C_\phi >0$, independent of  $\Delta x$,  such that  
\be\label{interp_estim}
\sup_{x\in\overline{\OO}}\; \big|\phi(x) - I \left[ \phi|_{\G_{\Delta x}}\right] (x)\big|\leq C_{\phi} (\Delta x)^2.
\ee
\end{lemma}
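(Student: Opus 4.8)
The statement is the classical $\mathbb{P}_1$ interpolation error bound, adapted to the present setting where $\phi$ is evaluated at a point $x\in\overline{\OO}$ while the interpolant is assembled on the flat mesh $\T_{\Delta x}$ through the projection $p_{\Delta x}$. The plan is to compare $\phi(x)$ with $I[\phi|_{\G_{\Delta x}}](x)$ by inserting the intermediate value $\phi(p_{\Delta x}(x))$ and estimating the two resulting terms separately. Writing $y:=p_{\Delta x}(x)\in\overline{\OO}_{\Delta x}\cap\overline{\OO}$ and letting $\mathsf{T}\in\T_{\Delta x}$ be a simplex containing $y$, I would first record that, since the $\psi_i$ restricted to $\mathsf{T}$ are the barycentric-coordinate functions, definition \eqref{interpolation} gives $I[\phi|_{\G_{\Delta x}}](x)=\sum_i\psi_i(y)\phi(x_i)$, which is precisely the affine $\mathbb{P}_1$ interpolant $\Pi_{\mathsf{T}}\phi$ of $\phi$ on $\mathsf{T}$ evaluated at $y$. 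Thus the problem splits as $\phi(x)-I[\phi|_{\G_{\Delta x}}](x)=\big(\phi(x)-\phi(y)\big)+\big(\phi(y)-\Pi_{\mathsf{T}}\phi(y)\big)$.

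For the term $\phi(y)-\Pi_{\mathsf{T}}\phi(y)$ I would run the standard Taylor argument on the simplex. Writing $y=\sum_j\lambda_j v_j$ in barycentric coordinates with respect to the vertices $v_j$ of $\mathsf{T}$ (so $\lambda_j\ge 0$, $\sum_j\lambda_j=1$, and $\psi_i(y)=\lambda_j$ when $x_i=v_j$), a second-order expansion gives $\phi(v_j)=\phi(y)+\langle D\phi(y),v_j-y\rangle+R_j$ with $|R_j|\le\frac12\,\|D^2\phi\|_\infty\,|v_j-y|^2$. Since $\sum_j\lambda_j(v_j-y)=0$, the first-order terms cancel and $\Pi_{\mathsf{T}}\phi(y)-\phi(y)=\sum_j\lambda_j R_j$; bounding $|v_j-y|$ by the diameter of $\mathsf{T}$, which the regularity condition controls by $2\Delta x/\delta$, yields $|\phi(y)-\Pi_{\mathsf{T}}\phi(y)|\le\frac{2}{\delta^2}\|D^2\phi\|_\infty(\Delta x)^2$. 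Note that this $L^\infty$ estimate requires only the bound on the diameter, not the full shape regularity. The only mild technical point here is that the segment $[y,v_j]$ may briefly exit $\overline{\OO}$; since $\partial\OO$ is $C^3$, one extends $\phi$ to a $C^2$ function on a fixed neighborhood of $\overline{\OO}$ and takes $\|D^2\phi\|_\infty$ over that neighborhood.

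For the term $\phi(x)-\phi(y)$ I would use that $\phi$, being $C^2$ on the compact set $\overline{\OO}$, is Lipschitz, so that $|\phi(x)-\phi(p_{\Delta x}(x))|\le L_\phi\,|x-p_{\Delta x}(x)|$, and it remains to show $|x-p_{\Delta x}(x)|\le C(\Delta x)^2$. For $x$ in an interior element one has $\widehat{\mathsf{T}}=\mathsf{T}$ and $p_{\Delta x}(x)=x$, so the displacement vanishes; the only contributions come from boundary elements, where $p_{\mathsf{T}}$ projects the curved element $\widehat{\mathsf{T}}$ onto the flat simplex $\mathsf{T}$ sharing its vertices. The displacement $|x-p_{\mathsf{T}}(x)|$ is then bounded by the maximal gap between $\widehat{\mathsf{T}}$ and $\mathsf{T}$, i.e. by the deviation of the $C^3$ boundary piece from the secant plane through its vertices, which is $O((\Delta x)^2)$; this is the geometric content underlying \eqref{estim_o} and the regularity of the distance to $\partial\OO$ collected in the Appendix.

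Combining the two bounds gives $\sup_{x\in\overline{\OO}}|\phi(x)-I[\phi|_{\G_{\Delta x}}](x)|\le C_\phi(\Delta x)^2$ after relabeling the constant, which depends on $\phi$ only through $\|D^2\phi\|_\infty$, $L_\phi$ and on the geometry through $\delta$ and $C$. The genuinely non-routine step is the geometric estimate $|x-p_{\Delta x}(x)|\le C(\Delta x)^2$: it is where the smoothness of $\partial\OO$ from \textbf{(H1)}, the fact that the boundary vertices lie on $\partial\OO$, and the second-order accuracy of the polyhedral approximation $\OO_{\Delta x}$ all come into play, and it is the part I would spell out carefully using the Appendix results on the distance function. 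Everything else reduces to the classical Taylor-expansion interpolation argument.
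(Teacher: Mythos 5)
Your proof is correct and follows essentially the same route as the paper's: both split $\phi(x)-I[\phi|_{\G_{\Delta x}}](x)$ through the intermediate value $\phi(p_{\Delta x}(x))$, bound the first difference by the Lipschitz continuity of $\phi$ together with the $O((\Delta x)^2)$ geometric estimate \eqref{estim_o}, and bound the second by the standard $\PP_1$ interpolation error on the flat simplex. The only difference is presentational: the paper cites \cite{ciarlet} for the interpolation estimate, whereas you spell out the barycentric Taylor argument (and the extension of $\phi$ beyond $\ov\OO$) explicitly.
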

\begin{proof}
Let $x\in \ov{\OO}$ and let $\mathsf{T}\in \T_{\Delta x}$ and $\widehat{\mathsf{T}}\in \widehat \T_{\Delta x}$ be two elements having the same vertices and such that $x\in\widehat{\mathsf{T}}$. By the triangular inequality
$$
|\phi(x) - I \left[ \phi|_{\G_{\Delta x}}\right] (x)|  \leq  |\phi(x) -  \phi (p_{\mathsf{T}}  (x))| + |\phi (p_{\mathsf{T}} (x)) - I\left[\phi|_{\G_{\Delta x}}\right](x)|.
$$
Using that $\phi$ is Lipschitz, we deduce from \eqref{estim_o} the existence of $C_1>0$, independent of $\Delta x$ and $x\in \overline{\OO}$, such that $|\phi(x) -  \phi(p_{\mathsf{T}}  (x))|\leq C_1(\Delta x)^2$. In addition, by standard error estimates for $\PP_1$ interpolation (see for instance \cite{ciarlet}) and \eqref{interpolation}, there exists $C_2>0$, independent of $\Delta x$  and $x\in \overline{\OO}$, such that $|\phi (p_{\mathsf{T}} (x)) - I\left[\phi|_{\G_{\Delta x}}\right](x)|\leq C_2(\Delta x)^2$. Relation \eqref{interp_estim} follows from these two estimates. 
\end{proof}
\begin{figure}[h!]
\begin{center}
\includegraphics[width=0.5\textwidth]{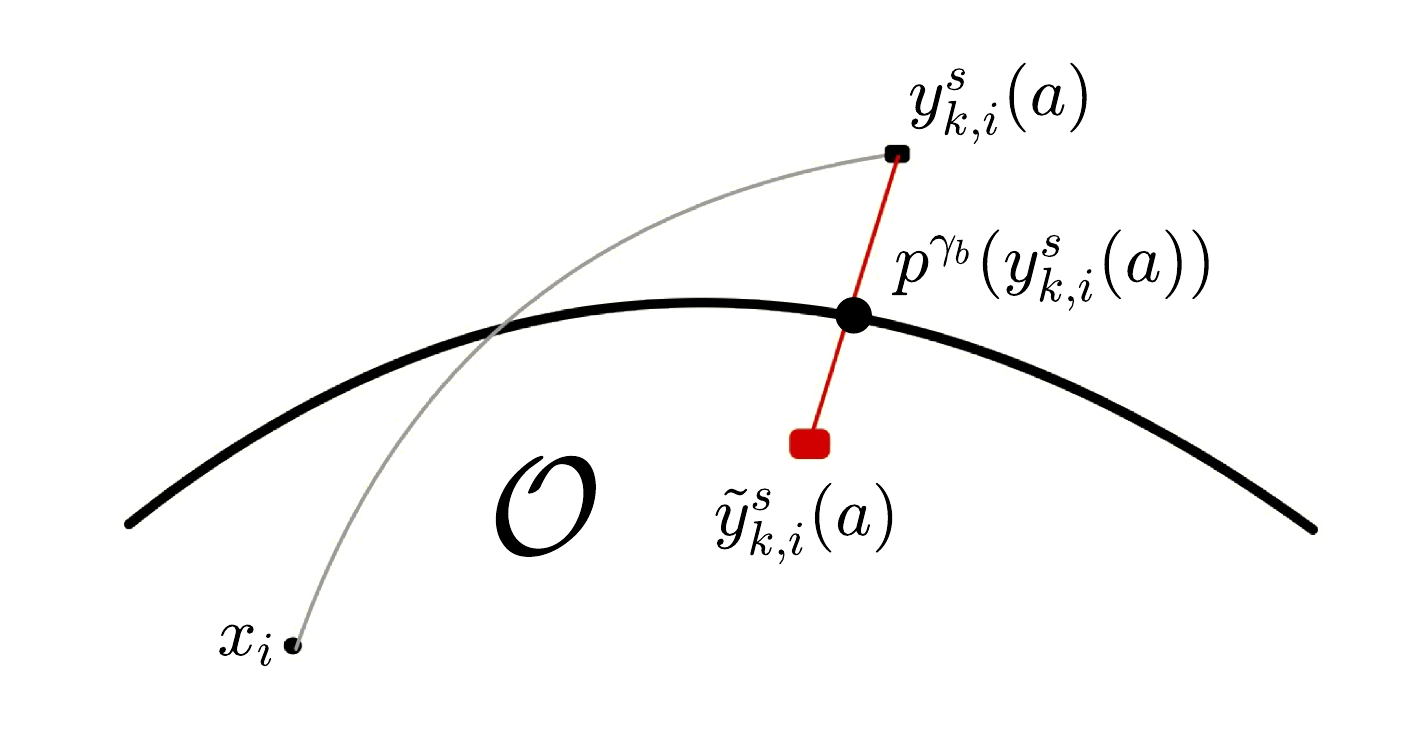}\end{center}
\caption{Reflection: reflected characteristic $\tilde y_{k,i}^{s}(a)$ (red square)  starting from $x_i$  (black circle), which exits from $\OO$ and arrives in  $y_{k,i}^{s}(a)$ (black square). The red segment represents the oblique direction $\gamma_b$ and  the black circle the  projected point $p^{\gamma_b}(y_{k,i}^{s}(a)$).} 
\label{fig:RC}\end{figure}
\subsection{A semi-Lagrangian scheme} Let $\Dt>0$, set $N_\Dt:=\floor {T/\Dt} $, $\I_{\Delta t}:=\{0, \dots, N_{\Delta t} \}$ and $\I_{\Delta t}^*:=\I_{\Delta t}\setminus\{N_{T}\}$. We define the time grid $\G_\Dt:=\{t_k \,  | \, t_k=k\Delta t, \, \, k\in \I_{\Delta t}\}$. 
Given  $(k,i) \in \I^*_{\Dt}\times  \I_{\Delta x}$, $a\in A$, and $\ell=1,\dots, N_{\sigma}$,  we define the discrete characteristics
\be\label{evaluated_characteristics}
y_{k,i}^{\pm,\ell}(a)=    x_i +  \Delta t  \mu \left(t_k,x_i, a \right) \pm \sqrt{N_{\sigma} \Delta t}\sigma^{\ell}(t_k,x_i,a).
\ee
Let $\I=\{+,-\} \times \{1, \hdots, N_{\sigma}\}$ and let $\bar c>0$ be a fixed constant. For any $\delta>0$ we set 
$$ (\partial \OO)_{\delta}:=\{x \in \RR^N \, | \, d(x,\partial \OO)<\delta\}.$$
By Proposition~\ref{oblique_projection_well_defined} in the Appendix, there exist  $R>0$ and two $C^1$ functions $(\partial \OO)_{R} \times B \ni (x,b)\mapsto p^{\gamma_b}(x) \in \partial \OO$ and $(\partial \OO)_{R} \times B \ni (x,b)\mapsto d^{\gamma_b}(x)\in\RR$, uniquely determined, such that   
\be\label{eq:def_proj-dist_prem}
x=p^{\gamma_b}(x) + d^{\gamma_b}(x)\gamma_b(p^{\gamma_b}(x)), \quad\text{for all }(x,b)\in  (\partial \OO)_{R} \times B. 
\ee
Therefore,   there exists $\overline{\Dt}>0$ such that for all $\Delta t \in [0,\overline{\Dt}]$,  $(k,i) \in \I^*_{\Dt}\times  \I_{\Delta x}$, $a\in A$, $b\in B$, and $s\in \I$, the {\it reflected characteristic} 
\be\label{definizione_R_n}
\tilde y_{k,i}^{s}(a,b) :=\begin{cases} y_{k,i}^{s}(a)  &\mbox{if }y_{k,i}^{s}(a) \in \overline \OO,\\
p^{\gamma_b}(y_{k,i}^{s}(a)) -\bar{c}\sqrt{\Dt}\gamma_b(p^{\gamma_b}(y_{k,i}^{s}(a))) 
 &\mbox{otherwise}  \end{cases} 
\ee
is well-defined.  In Figure~\ref{fig:RC} we illustrate  how the reflected characteristic is computed from the projection $p^{\gamma_b}(y_{k,i}^{s}(a))$ of $y_{k,i}^{s}(a)$ onto $\partial\OO$ parallel to $\gamma_b$.  Let us also set
\begin{align}
\tilde{d}^{s}_{k,i}(a,b) &:=\begin{cases} 0  &\mbox{if }y_{i,k}^{s}(a) \in \overline \OO,\\
d^{\gamma_b} (y_{k,i}^{s}(a)) +\bar c\sqrt{\Dt} 
 &\mbox{otherwise}, \end{cases}\label{definition_c} \\
\tilde{g}_{k,i}^{s}(a,b) &:=\begin{cases} 0  &\mbox{if }y_{k,i}^{s}(a) \in \overline \OO,\\
g\left(t_k, p^{\gamma_b} \left(y_{k,i}^{s}(a) \right),b \right) 
 &\mbox{otherwise}. \end{cases}\label{definizione_c_n_qw2qwq}
\end{align}
Notice that  if $y_{k,i}^{s}(a) \notin \overline \OO$, then \eqref{eq:def_proj-dist_prem},   \eqref{definizione_R_n}, and \eqref{definition_c} imply that 
\be\label{definizione_R_n2}\tilde y_{k,i}^{s}(a,b)= y_{k,i}^{s}(a)-\tilde{d}^{s}_{k,i}(a,b)\gamma_b\left(p^{\gamma_b}(y_{k,i}^{s}(a))\right). \ee
 For $(k,i)\in \I^*_{\Dt}\times  \I_{\Delta x}$ and $\Phi: \G_{\Delta x}\to\RR$, let us define ${\mathcal{S}}_{k,i}[\Phi]:A\times B\to \RR$ by
\be\label{eq:scheme_fullydiscrete_case_linear}
\mathcal{S}_{k,i}[\Phi](a,b):=
\frac{1}{2N_{\sigma}} \underset{s \in \I} \sum  \left[ I[\Phi](\tilde{y}^{s}_{k,i}(a,b)) +\tilde{d}^{s}_{k,i}(a,b)\tilde{g}_{k,i}^{s}(a,b)\right] +\Dt f(t_k,x_i,a) \ee
and set
\be\label{eq:scheme_fullydiscrete_case}
{S}_{k,i}[\Phi]    
 := \underset{\substack{a \in A,  \,  b \in B}}\inf \; \mathcal{S}_{k,i}[\Phi](a,b).
\ee

In the remainder of this work, we will consider the following fully discrete SL scheme to approximate the solution to ${\rm(HJB)}$.  
 \be\label{eq:def_fullydiscrete}
\begin{array}{rcl}
U_{k,i} &=& S_{k,i}\big[U_{k+1,(\cdot)}\big], \quad \text{for} \; (k,i)\in  \I^*_{\Dt}\times  \I_{\Delta x},\\[4pt]
U_{N_{\Dt},i}&=& \Psi(x_i), \quad \; \;  \; \; \;\,\text{for} \; i\in  \I_{\Delta x}. 
\end{array}   \tag{\rm HJB$_{\mbox{{\rm \tiny disc}}}$}
\ee

\subsection{Probabilistic interpretation of the scheme}\label{Sec:probabilistic} The fully-discrete SL to approximate the solution to \eqref{hjb_continuous} in the unbounded case, i.e. $\OO=\RR^d$, has a natural interpretation in terms of a discrete time, finite state, Markov control process (see e.g. \cite[Section 3]{CamFal95}). We show below that a similar interpretation holds for \eqref{eq:def_fullydiscrete}. The latter will play an important role in the stability analysis of \eqref{eq:def_fullydiscrete} presented in the next section. Given  $k\in \I_{\Delta t}^{*}$ and $a\in A$, $b\in B$, let us define the controlled transition law
\be\label{transition_probabilities_neumann_case}
p_{k, i,j}(a,b):= \frac{1}{2N_{\sigma}} \sum_{s\in \I} \beta_{j}(\tilde{y}_{k,i}^{s}(a,b)), \quad \mbox{for all } i, \, j \in \I_{\Delta x}. 
\ee
We say that $(\pi_{k})_{k\in \I_{\Delta t}^{*}}$ is a $N_{\Delta t}$-{\it policy} if for all $k\in \I_{\Delta t}^{*}$ we have $\pi_k: \G_{\Delta x} \to A\times B$. The set of $N_{\Delta t}$-policies is denoted by $\Pi_{N_{\Delta t}}$.  Let us fix $k\in \I^{*}_{\Delta t}$ and, for notational convenience, set $\mathfrak{X}_{k}=\G_{\Delta x}^{N_{\Delta t}-k+1}$.  Associated to  $x_i\in \G_{\Delta x}$ and $\pi \in \Pi_{N_{\Delta t}}$, there exists a probability measure $\PP^{k,x_{i},\pi}$ on $ 2^{\mathfrak{X}_k}$ (the powerset of $\mathfrak{X}_k$)  and a Markov chain $\{X_{m}  \, | \, m=k, \hdots, N_{\Delta t}\}$, with state space $\G_{\Delta x}$,  such that 
\be\label{transition_probabilities}
\PP^{k,x_{i},\pi}(X_k=x_i)=1  \quad \text{and} \quad \PP^{k,x_{i},\pi}(X_{m+1} =x_j \; | \; X_{m}=x_i )=p_{m,i,j}(\pi_{m}(x_i)),
\ee
for $m=k,\hdots, N_{\Delta t}-1$. Now, consider a family  $\{\xi_{k+1}, \hdots, \xi_{N_{\Delta t}}\}$ of $\RR^{N_{\sigma}}$-valued independent random variables, which are also independent of $\{X_m \, |  \, m=k, \hdots,N_{\Delta t}\}$, and  with common distribution given by
$$
\mathbb{P} ( \xi_{m} = \pm e_{\ell} ) = \frac{1}{2N_{\sigma}}, \quad \mbox{for } m= k+1,\hdots,N_{\Dt} \mbox{ and } \ell= 1, \dots, N_{\sigma},
$$ 
where $e_\ell$ denotes the $\ell$-th canonical vector of $\RR^{N_{\sigma}}$. By a slight abuse of notation (see  \eqref{evaluated_characteristics}), for $m=k,\hdots,N_{\Dt}-1$, $x_i\in \G_{\Delta x}$, and  $a\in A$, let us set  
\begin{equation}
y_{m}(x_i,a)=x_i+\Delta t\mu(t_{m},x_i,a)+ \sqrt{N_{\sigma}\Dt}\sigma(t_{m},x_i,a)\xi_{m+1}. \label{definition_stochastic_characteristic}
\end{equation}
For $m=k,\hdots,N_{\Dt}-1$, $x_i\in \G_{\Delta x}$,  $a\in A$, and $b\in B$,   define the random variable  
\begin{equation}\label{def:h_aux}
h(t_m,x_i,a,b) = \left\{\ba{ll}  0   &  \mbox{if } y_{m}(x_i,a)  \in \ov{\OO},\\[5pt]
\left(d^{\gamma_b}( y_{m}(x_i,a))+\bar{c}\sqrt{\Dt}\right)g(t_{m},p^{\gamma_b}(y_{m}(x_i,a)),b)  &  \mbox{otherwise.}
\ea \right.
\end{equation}
For all $i\in \I_{N_{\Delta x}}$ and  $\pi\in \Pi_{N_{\Dt}}$, let us define 
$$\ba{rcl}
J_{k,i}(\pi)&=& \EE_{\PP^{k,x_{i},\pi}} \left( \sum_{m=k}^{N_{\Dt}-1}\left[ \Delta t f (t_{m}, X_{m},\alpha_m)+ h(t_m, X_{m},\alpha_m,\beta_{m}\big)\right] + \Psi\big(X_{N_{\Dt}}\big)\right),\\[6pt]
J_{N_{\Delta t},i}(\pi)&=& \Psi(x_i),
\ea
$$
where, for notational convenience, we have denoted, respectively, by $\alpha_m$  and $\beta_{m}$ the first $N_{A}$ and the last $N_{B}$ coordinates of $\pi_{m}(X_m)$. Notice that, by construction and \eqref{eq:scheme_fullydiscrete_case_linear}, we have that
$$J_{k,i}(\pi)=\SS_{k,i}[J_{k+1, (\cdot) }(\pi)](\alpha_{k},\beta_{k}).$$
Moreover, setting 
$$
\ba{rcl}
\hat{U}_{k,i}&=&\inf_{\pi \in \Pi_{N_{\Dt}}} \; J_{k,i}(\pi),\\[2pt]
\hat{U}_{N_{\Delta t},i}&=& \Psi(x_i), 
\ea
$$
for all $i\in\G_{\Delta x}$,  the dynamic programming principle (see e.g. \cite[Theorem 12.1.5]{MR3587374}) implies that $\{\hat{U}_{k,i} \, | \, k\in \I_{\Dt}, \, i\in \I_{\Delta x}\}$ satisfies \eqref{eq:def_fullydiscrete}. Since the latter  has a unique solution, we deduce that $U_{k,i}=\hat{U}_{k,i}$ for all $k\in \I_{\Dt}$ and $i\in \I_{\Delta x}$.
\begin{remark} Scheme \eqref{eq:def_fullydiscrete} can thus be interpreted as a Markov chain discretization of an stochastic control problem with oblique reflection in the boundary {\rm(}see e.g. \cite{MR2421330}{\rm)}. 
\end{remark}

\section{Properties of the fully discrete scheme}\label{properties_fully_discrete_scheme}
In this section, we establish some basic properties of  \eqref{eq:def_fullydiscrete}.
\begin{proposition}\label{monotonicity} The following hold:\\
{\rm(i) (Monotonicity)}  For all  $U, V\colon \G_{\Dx}\to\RR$ with $U \leq V$, we have 
$$ \mathcal{S}_{k,i}[U] \leq \mathcal{S}_{k,i}[V],\quad\text{for } k\in \I^*_{\Dt}  \; \text{and } i\in   \I_{\Dx}.$$
{\rm(ii) (Commutation by constant)}  For any $c\in \RR$ and $U\colon\G_{\Dx}\to\RR$,  
$$ \mathcal{S}_{k,i}[U+c]=\mathcal{S}_{k,i}[U]+c,\quad\text{for } k\in \I^*_{\Dt}  \; \text{and } i\in   \I_{\Dx}. $$
\end{proposition}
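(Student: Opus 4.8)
The plan is to reduce both assertions to two elementary algebraic properties of the $\mathbb{P}_1$ interpolation operator $I$ defined in \eqref{interpolation}, namely that it is order-preserving and that it reproduces constants. The starting observation is that, in the expression \eqref{eq:scheme_fullydiscrete_case_linear} for $\mathcal{S}_{k,i}[\Phi](a,b)$, the two additive terms $\tilde{d}^{s}_{k,i}(a,b)\tilde{g}^{s}_{k,i}(a,b)$ and $\Dt f(t_k,x_i,a)$ do not depend on the grid function $\Phi$. Hence in any comparison or translation they cancel or are left untouched, and everything is governed by the single sum $\frac{1}{2N_\sigma}\sum_{s\in\I} I[\Phi](\tilde{y}^{s}_{k,i}(a,b))$.

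For part (i), I would first record that for every $x\in\ov\OO$ the interpolation weights $\psi_i(p_{\Dx}(x))$ are nonnegative, since $0\leq\psi_i\leq 1$. Consequently, if $U\leq V$ on $\G_{\Dx}$, then $I[U](x)=\sum_i \psi_i(p_{\Dx}(x))U(x_i)\leq\sum_i\psi_i(p_{\Dx}(x))V(x_i)=I[V](x)$ for every $x$; that is, $I$ is monotone. Evaluating this inequality at the reflected characteristics $x=\tilde{y}^{s}_{k,i}(a,b)$, summing over $s\in\I$ against the nonnegative factor $1/(2N_\sigma)$, and using that the remaining terms in \eqref{eq:scheme_fullydiscrete_case_linear} are independent of the grid function, I obtain $\mathcal{S}_{k,i}[U](a,b)\leq\mathcal{S}_{k,i}[V](a,b)$ for every $(a,b)\in A\times B$, which is the claimed monotonicity.

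For part (ii), the relevant property is the partition-of-unity identity $\sum_i\psi_i\equiv 1$ (valid on $\OO_{\Dx}$, hence applicable at $p_{\Dx}(x)$), which yields $I[U+c](x)=I[U](x)+c$ for every $x$ and every constant $c\in\RR$. Substituting this into \eqref{eq:scheme_fullydiscrete_case_linear} produces exactly one extra term, $\frac{1}{2N_\sigma}\sum_{s\in\I}c$; since $\I=\{+,-\}\times\{1,\dots,N_\sigma\}$ has precisely $2N_\sigma$ elements, this extra term equals $c$, while the $\Phi$-independent terms are unaffected. Thus $\mathcal{S}_{k,i}[U+c](a,b)=\mathcal{S}_{k,i}[U](a,b)+c$.

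I do not expect a genuine obstacle here: the whole argument is a direct consequence of the sign and the normalization of the finite-element basis, and no property of the reflected characteristics themselves (only that they lie in $\ov\OO$, where $I$ is defined) is needed. The one point deserving a moment's care is the bookkeeping in part (ii), where the cardinality $|\I|=2N_\sigma$ must combine with the prefactor $1/(2N_\sigma)$ so that the $2N_\sigma$-fold sum reproduces the constant $c$ exactly rather than a multiple of it.
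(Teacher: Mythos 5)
Your proof is correct and follows the same (and essentially the only) route as the paper, which simply states that both assertions follow directly from the definition \eqref{eq:scheme_fullydiscrete_case_linear}; you supply the details the paper leaves implicit, namely the nonnegativity of the basis functions $\psi_i$, the partition-of-unity identity, and the cancellation of $|\I|=2N_\sigma$ against the prefactor $1/(2N_\sigma)$.
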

\begin{proof} 
Both assertions follow directly from \eqref{eq:scheme_fullydiscrete_case_linear} and \eqref{eq:def_fullydiscrete}.
\end{proof}
We show in Proposition~\ref{consistency_scheme} below a consistency result for \eqref{eq:def_fullydiscrete}. 
For this purpose, let us set
\begin{eqnarray}
\H(t,x,p,M,a)  &=&  -\frac{1}{2} \Tr\left( \sigma(t,x,a)\sigma(t,x,a)^\top M \right) - \langle \mu (t,x,a),p \rangle - f(t,x,a)\label{H_a}, \\
\; &  \; & \text{for $(t,x,p,M,a)\in \ov{\OO}_{T} \times \RR^N \times \RR^{N\times N_{\sigma}} \times A$}, \nonumber\\
\L(t,x,p,b) &=&     \langle \gamma(x,b),p \rangle - g(t,x,b), \\
\; &  \; & \text{for } (t,x,p,b)\in [0,T]\times \partial \OO\times \RR^N \times B, \label{L_b}\nonumber
\end{eqnarray} 
 and for all $k\in\I^*_{\Delta t}$, $i\in\I_{\Delta x}$, $s\in\I$, $q\in\RR^N$, $a\in A$, and $b\in B$, define
\be\label{def:tilde_L}
\tilde{\L}_{k,i}^{s}(q,a,b):= \left\{\ba{ll} 0 & \mbox{if }  y^{s}_{k,i}(a) \in \ov \OO, \\[6pt]
								\L\left(t_k, p^{\gamma_b} (y_{k,i}^{s}(a)),q,b\right)  & \mbox{otherwise.}\ea\right.
\ee
\begin{proposition}[Consistency]\label{consistency_scheme} Let $\phi \in C^3 \left( \ov{\OO}\right)$ and denote by $\phi|_{\G_{\Dx}}$ its restriction to $\G_{\Dx}$. Then the following hold:
\begin{enumerate}[{\rm(i)}]
\item\label{consistency_linear} For all $k\in \I^{*}_{\Dt}$, $i\in\I_{\Dx}$, $a\in A$, and $b\in B$, we have 
$$\ba{ll}
\mathcal{S}_{k,i}[\phi|_{\G_{\Dx}}](a,b) -\phi(x_i)
=&-\Dt \H (t_k,x_i,D\phi(x_i), D^2\phi(x_i),a)\\[8pt]
&-\frac{1}{2N_{\sigma}} \underset{s\in \I} \sum
 \tilde d^{s}_{k,i}(a,b) \left( \tilde{\L}_{k,i}^{s}(D \phi(x_i),a,b) - \sqrt{\Dt} K^{s}_{k,i}(a,b) \right) \\[10pt]
  \; & + \, O \left( \Dt \sqrt{\Dt} + (\Delta x)^2\right),
\ea$$ 
where the set of constants $\{K_{k,i}^{s}(a,b) \, | \, k\in\I^*_{\Delta t}, \, i \in\I_{\Delta x}, \, s\in \I, \, a\in A, \, b\in B \}$ is bounded, independently of $(\Delta t, \Delta x)$.
\item\label{consistency_nonlinear} For all $k\in \I^{*}_{\Dt}$ and $i\in\I_{\Dx}$, we have 
$$\ba{ll}
{S}_{k,i}[\phi|_{\G_{\Dx}}] -\phi(x_i)=&-\underset{\substack{a \in A,  \;  b \in B}}\sup\bigg\{\Delta t  \H (t_k,x_i, D\phi(x_i), D^2\phi(x_i),a)  \\[8pt]
& \hspace{0.5cm}+ \frac{1}{2N_{\sigma}} \underset{s\in \I} \sum
 \tilde d^{s}_{k,i}(a,b) \left( \tilde{\L}_{k,i}^{s}(D \phi(x_i),a,b) - \sqrt{\Dt} K^{s}_{k,i}(a,b) \right)\bigg\}\\[12pt]
&  \hspace{0.5cm}+ O \left( \Dt \sqrt{\Dt} + (\Delta x)^2\right).
\ea$$ 
\end{enumerate}

\end{proposition}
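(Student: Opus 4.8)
The plan is to establish (i) by Taylor-expanding $\phi$ along the discrete characteristics and treating the two branches of the definition of $\tilde y_{k,i}^{s}(a,b)$ in a single unified formula, and then to deduce (ii) by taking the infimum over $(a,b)$. As a first reduction, since $\tilde y_{k,i}^{s}(a,b)\in\overline\OO$ for $\Delta t\le\overline{\Dt}$, Lemma~\ref{Interp_estimate} lets me replace $I[\phi|_{\G_{\Dx}}](\tilde y_{k,i}^{s}(a,b))$ by $\phi(\tilde y_{k,i}^{s}(a,b))$ at the cost of an $O((\Delta x)^2)$ error that is uniform in $s,a,b$. Because $\frac{1}{2N_\sigma}\sum_{s\in\I}\phi(x_i)=\phi(x_i)$, it then suffices to expand, for each $s$, the quantity $T_s:=\phi(\tilde y_{k,i}^{s}(a,b))+\tilde d_{k,i}^{s}(a,b)\,\tilde g_{k,i}^{s}(a,b)$ minus $\phi(x_i)$. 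The core claim, valid in both branches, is that
\[
\begin{aligned}
T_s-\phi(x_i)={}&\langle D\phi(x_i),\,y_{k,i}^{s}(a)-x_i\rangle+\tfrac12\langle D^2\phi(x_i)(y_{k,i}^{s}(a)-x_i),\,y_{k,i}^{s}(a)-x_i\rangle\\
&-\tilde d_{k,i}^{s}(a,b)\,\tilde\L_{k,i}^{s}(D\phi(x_i),a,b)+\tilde d_{k,i}^{s}(a,b)\sqrt{\Dt}\,K_{k,i}^{s}(a,b)+O(\Dt^{3/2}).
\end{aligned}
\]
When $y_{k,i}^{s}(a)\in\overline\OO$ this is the plain second-order Taylor expansion, whose $C^3$ Lagrange remainder is $O(|y_{k,i}^{s}(a)-x_i|^3)=O(\Dt^{3/2})$ since $y_{k,i}^{s}(a)-x_i=O(\sqrt{\Dt})$; here all boundary quantities vanish and I set $K_{k,i}^{s}=0$.

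In the branch $y_{k,i}^{s}(a)\notin\overline\OO$ I would Taylor-expand $\phi$ around $x_i$ using the increment $\tilde y_{k,i}^{s}-x_i=(y_{k,i}^{s}(a)-x_i)-\tilde d_{k,i}^{s}\,\gamma_b(p^{\gamma_b}(y_{k,i}^{s}(a)))$. The first-order contribution $-\tilde d_{k,i}^{s}\langle D\phi(x_i),\gamma_b(p^{\gamma_b}(y_{k,i}^{s}(a)))\rangle$ combines with $\tilde d_{k,i}^{s}\,\tilde g_{k,i}^{s}=\tilde d_{k,i}^{s}\,g(t_k,p^{\gamma_b}(y_{k,i}^{s}(a)),b)$ to give exactly $-\tilde d_{k,i}^{s}\,\tilde\L_{k,i}^{s}(D\phi(x_i),a,b)$, by the definitions of $\L$ and $\tilde\L_{k,i}^{s}$. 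For the second-order part I would expand $\tfrac12\langle D^2\phi(\tilde y_{k,i}^{s}-x_i),\tilde y_{k,i}^{s}-x_i\rangle$ into the ``diffusion'' term $\tfrac12\langle D^2\phi(y_{k,i}^{s}(a)-x_i),y_{k,i}^{s}(a)-x_i\rangle$ plus the two corrections $-\tilde d_{k,i}^{s}\langle D^2\phi(y_{k,i}^{s}(a)-x_i),\gamma_b\rangle+\tfrac12(\tilde d_{k,i}^{s})^2\langle D^2\phi\,\gamma_b,\gamma_b\rangle$. The decisive point is that $\tilde d_{k,i}^{s}=d^{\gamma_b}(y_{k,i}^{s}(a))+\bar c\sqrt{\Dt}=O(\sqrt{\Dt})$ (the distance part because $d^{\gamma_b}$ is $C^1$ and vanishes on $\partial\OO$, by Proposition~\ref{oblique_projection_well_defined}), while $y_{k,i}^{s}(a)-x_i=O(\sqrt{\Dt})$; hence each correction factors as $\tilde d_{k,i}^{s}\sqrt{\Dt}$ times a quantity bounded uniformly in $(\Dt,\Delta x)$ (using $|\gamma_b|=1$ and the boundedness of $D^2\phi$, $\mu$, $\sigma$). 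This both defines $K_{k,i}^{s}(a,b)$ and yields the asserted uniform bound on these constants.

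Dividing by $2N_\sigma$, summing over $s\in\I$, and adding $\Dt\,f(t_k,x_i,a)$ then finishes (i). By the usual parity cancellation in the sum over $s=(\pm,\ell)$, the first-order terms sum to $\Dt\langle\mu,D\phi(x_i)\rangle$ and the diffusion terms to $\tfrac12\Dt\,\Tr(\sigma\sigma^\top D^2\phi(x_i))+O(\Dt^2)$, using $\sum_{\ell}\sigma^{\ell}(\sigma^{\ell})^\top=\sigma\sigma^\top$; together with $\Dt f$ these reconstruct precisely $-\Dt\,\H(t_k,x_i,D\phi(x_i),D^2\phi(x_i),a)$, the remaining terms give $-\frac{1}{2N_\sigma}\sum_{s\in\I}\tilde d_{k,i}^{s}(\tilde\L_{k,i}^{s}-\sqrt{\Dt}K_{k,i}^{s})$, and all neglected contributions (interpolation, Taylor remainders, the $O(\Dt^2)$) collect into $O(\Dt\sqrt{\Dt}+(\Delta x)^2)$. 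Since this error is uniform in $(a,b)$, statement (ii) follows from $S_{k,i}[\phi|_{\G_{\Dx}}]=\inf_{a,b}\mathcal{S}_{k,i}[\phi|_{\G_{\Dx}}](a,b)$ and $\inf_{a,b}(-F(a,b))=-\sup_{a,b}F(a,b)$.

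The main obstacle I anticipate is the bookkeeping in the boundary branch: one must show that the second-order corrections generated by replacing $y_{k,i}^{s}(a)$ with its oblique reflection $\tilde y_{k,i}^{s}(a,b)$ are of the exact form $\tilde d_{k,i}^{s}\sqrt{\Dt}\times(\text{bounded})$, so that they can be absorbed into the boundary term without contaminating either the interior Hamiltonian $\H$ or the boundary operator $\tilde\L_{k,i}^{s}$, and to verify carefully that after the parity cancellations every discarded contribution is genuinely $O(\Dt^{3/2})$ rather than merely $O(\Dt)$.
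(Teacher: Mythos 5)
Your proposal is correct and follows essentially the same route as the paper's proof: a second-order Taylor expansion of $\phi$ around $x_i$ along the reflected characteristics, with the reflection correction $-\tilde d^{s}_{k,i}\gamma_b(p^{\gamma_b}(y^{s}_{k,i}))$ producing the boundary operator $\tilde{\L}^{s}_{k,i}$ after combination with the $\tilde d^{s}_{k,i}\tilde g^{s}_{k,i}$ term, the second-order cross terms absorbed into $\tilde d^{s}_{k,i}\sqrt{\Dt}\,K^{s}_{k,i}$ with $K$ bounded because $\tilde d^{s}_{k,i}/\sqrt{\Dt}$ and $|y^{s}_{k,i}-x_i|/\sqrt{\Dt}$ are bounded, parity cancellation over $s=(\pm,\ell)$ reconstructing $-\Dt\,\H$, and part (ii) obtained by taking the infimum over $(a,b)$ using the uniformity of the error. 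The only differences are organizational (you invoke Lemma~\ref{Interp_estimate} at the start rather than at the end, and package the per-$s$ expansion in a single unified formula), which do not change the substance of the argument.
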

\begin{proof}   In what follows, we denote by $C>0$  a generic constant, which is independent of $k$, $i$, $s$ $a$, $b$, $\Delta t$ and $\Delta x$.  Since assertion {\rm(ii)} follows directly from ${\rm(i)}$, we only show the latter. 

For every $s\in\I$, \eqref{evaluated_characteristics} and \eqref{definition_c} imply that $0\leq \tilde{d}^{s}_{k,i}(a,b)\leq C\sqrt{\Dt}$. Thus, by \eqref{evaluated_characteristics}, \eqref{definizione_R_n2}, and a second order Taylor expansion of $\phi$ around $x_i$, for every $\ell=1,\hdots,N_{\sigma}$,  we have
$$\ba{lll}
\phi \left( \tilde{y}^{\pm,\ell}_{k,i}(a,b)  \right)\\
\hspace{0.5cm}=\phi \left( x_i \right) + \Dt  \langle D \phi(x_i),\mu (t_k,x_i,a)\rangle + \frac{N_{\sigma} \Dt }{2} \langle D^2\phi(x_i) \sigma^{\ell}(t_k,x_i,a), \sigma^{\ell}(t_k,x_i,a) \rangle  \\[8pt] 
\hspace{0.5cm}\pm \sqrt{N_{\sigma} \Dt } \langle D \phi(x_i), \sigma^{\ell} (t_k,x_i,a) \rangle  - \tilde{d}^{\pm,\ell}_{k,i}(a,b)\left\langle D \phi(x_i), \tilde{\gamma}_{k,i}^{\pm,\ell}(a,b) \right\rangle\\[8pt]
\hspace{0.5cm} + \frac{\left(  \tilde{d}^{\pm,\ell}_{k,i}(a,b) \right)^2}{2} \left\langle D^2 \phi(x_i)\tilde{\gamma}_{k,i}^{\pm,\ell}(a,b), \tilde{\gamma}_{k,i}^{\pm,\ell}(a,b) \right\rangle \\[8pt]
\hspace{0.5cm} \mp \sqrt{N_{\sigma}\Dt} \tilde{d}^{\pm,\ell}_{k,i}(a,b)\left\langle D^2 \phi(x_i)\tilde{\gamma}_{k,i}^{\pm,\ell}(a,b), \sigma ^{\ell}(t_k,x_i,a) \right\rangle + O \left( \Dt \sqrt{\Dt} \right), \\[8pt]
\ea$$
where, for every $s\in\I$,
$$
\tilde{\gamma}_{k,i}^{s}(a,b):= \begin{cases}  0  & \text{if } y_{k,i}^{s}(a)\in \ov{\OO}, \\[6pt]
									  \gamma_b \left(p^{\gamma_b} (y_{k,i}^{s}(a))\right)  & \text{otherwise}.
\end{cases}
$$

This implies that
\be\label{eq:escono}\ba{l}
\half \phi \left( \tilde{y}^{+,\ell}_{k,i}(a,b) \right)+\half \phi \left( \tilde{y}^{-,\ell}_{k,i}(a,b)  \right)  \; \\[8pt]
\hspace{1cm}=   \phi(x_i) + \Delta t \left\langle D\phi(x_i),\mu(t_k,x_i,a)\right\rangle  + \frac{N_{\sigma} \Delta t}{2}  \left\langle D^2\phi(x_i) \sigma^{\ell}(t_k,x_i,a),\sigma^{\ell}(t_k,x_i,a)\right\rangle \\[8pt]
 \hspace{1.2cm}- \tilde{d}^{+,\ell}_{k,i}(a,b) \left( \left\la D\phi(x_i),\tilde{\gamma}_{k,i}^{+,\ell}(a,b) \right\rangle - \sqrt{\Dt} K^{+,\ell}_{k,i}(a,b)\right) \\[8pt]
 \hspace{1.2cm}  - \tilde{d}^{-,\ell}_{k,i}(a,b) \left( \left \langle D\phi(x_i),\tilde{\gamma}_{k,i}^{-,\ell}(a,b)\right\ra - \sqrt{\Dt}K^{-,\ell}_{k,i}(a,b)\right) +  O \left( \Dt \sqrt{\Dt} \right),
\ea
\ee
where

$$ 
\begin{aligned}K_{k,i}^{\pm,\ell}(a,b) :=&  
\frac{\tilde{d}^{\pm,\ell}_{k,i}(a,b)}{2\sqrt{\Dt}}\langle D^2 \phi(x_i)\tilde{\gamma}_{k,i}^{\pm,\ell}(a,b), \tilde{\gamma}_{k,i}^{\pm,\ell}(a,b) \rangle \\ 
&\mp \sqrt{N_{\sigma}}   \langle D^2 \phi(x_i)\tilde{\gamma}_{k,i}^{\pm,\ell}(a,b), \sigma ^{\ell}(t_k,x_i,a) \rangle.
\end{aligned}
$$

Multiplying \eqref{eq:escono} by $1/N_{\sigma}$ and taking the sum over $s\in \I$,  we obtain
$$
\ba{lll}
\frac{1}{2N_{\sigma}}\underset{s\in\I}{\sum} \phi ( \tilde{y}^{s}_{k,i}(a,b)) \\
\hspace{1cm}=\phi(x) +\Delta t \langle D\phi(x_i),\mu(t_k,x_i,a)\rangle  + \frac{\Dt}{2} \Tr\left( \sigma(t_k,x_i,a)\sigma(t_k,x_i,a)^T D^2\phi(x_i) \right) \\[8pt]
\hspace{1.2cm}- \frac{1}{2N_{\sigma}} \underset{s\in\I}\sum
\tilde d^{s}_{k,i}(a,b) \left( \left\la D\phi(x_i),\tilde{\gamma}_{k,i}^{s}(a,b) \right\rangle - \sqrt{\Dt} K^{s}_{k,i}(a,b) \right) \\[8pt]
\hspace{1.2cm}+ O \left( \Dt \sqrt{\Dt} \right),
\ea
$$
which, by Lemma~\ref{Interp_estimate}, yields
$$
\ba{ll}
\frac{1}{2N_{\sigma}}\underset{s\in\I}{\sum} I[\phi|_{\G_{\Dx}}] ( \tilde{y}^{s}_{k,i}(a,b)) \\
\hspace{1cm} =\phi(x) +\Delta t \langle D\phi(x_i),\mu(t_k,x_i,a)\rangle  + \frac{\Dt}{2} \Tr\left( \sigma(t_k,x_i,a)\sigma(t_k,x_i,a)^T D^2\phi(x_i) \right) \\[8pt]
\hspace{1.2cm} - \frac{1}{2N_{\sigma}} \underset{s\in\I}\sum
\tilde d^{s}_{k,i}(a,b) \left( \left\la D\phi(x_i),\tilde{\gamma}_{k,i}^{s}(a,b) \right\rangle - \sqrt{\Dt} K^{s}_{k,i}(a,b) \right) \\[8pt]
\hspace{1.2cm}+ O \left( \Dt \sqrt{\Dt} +  (\Dx)^2 \right). 
\ea
$$
The result follows from the previous expression, \eqref{eq:scheme_fullydiscrete_case_linear}, \eqref{H_a} and \eqref{def:tilde_L}.
\end{proof}

For $k\in \I_{N_{\Delta t}}^*$ and $a\in A$, let us define
\be\label{gamma_k}
(\forall \,k \in \I^*_{N_{\Dt}}, \forall\, a \in A )\quad\Gamma_k (a):= \{ x_{i} \in \G_{\Delta x} \; | \; \exists \; s\in \I, \; y_{k,i}^s(a) \notin \ov{\OO}\},
\ee
and recall from Sect.~\ref{Sec:probabilistic} that given    $x_i\in \G_{\Delta x}$ and  a policy $\pi \in \Pi_{N_{\Delta t}}$, the Markov chain $\{X_{m}  \, | \, m=k, \hdots, N_{\Delta t}\}$ is defined by the transition probabilities  \eqref{transition_probabilities}. As in Sect.~\ref{Sec:probabilistic}, we denote by $\alpha_{m}$ and $\beta_{m}$ ($m=k, \hdots, N_{\Delta t}-1$),  respectively, the first $N_{A}$ and the last $N_{B}$ coordinates of $\pi_{m}(X_m)$. Finally, given $D\subset \RR^d$, we denote by $\II_{D}$ the indicator function of $D$, i.e. $\II_{D}(x)=1$, if $x\in D$, and  $\II_{D}(x)=0$, otherwise. 

The following technical result will be useful to establish the stability of \eqref{eq:def_fullydiscrete}.
\begin{lemma}\label{lemma:milstein_stab_d}  The following holds:  
\be\label{sojourn_time}
\sup_{k\in \I^{*}_{\Delta t}, \; i\in \I^{*}_{\Delta x}, \pi \in \Pi_{N_{\Delta t}}}  \EE_{\PP^{k,x_{i},\pi}}\left( \sum_{m=k}^{N_{T}-1} \mathbb{I}_{\Gamma_m \left(\alpha_{m}\right)}\big(X_{m}\big)   \right) \leq \frac{C}{\sqrt{\Dt}},\ee
where $C>0$ is independent of $(\Dt, \Dx)$ as long as $\Delta t$ is small enough and $(\Delta x )^2 /\Dt$ is bounded.
\end{lemma}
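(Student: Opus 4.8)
The plan is to construct a bounded, smooth Lyapunov-type test function $w$ and combine a telescoping (discrete Dynkin) identity with a one-step drift estimate, following the spirit of the continuous-time sojourn-time argument of \cite{Milstein96}. The mechanism is that whenever the chain sits at a point from which some characteristic leaves $\ov\OO$, i.e.\ $X_m\in\Gamma_m(\alpha_m)$, the oblique reflection in \eqref{definizione_R_n} pushes it back into $\OO$ by a definite amount of order $\sqrt{\Dt}$. Choosing $w$ to increase in the inward direction turns this into a one-step expected increase of $w$ of order $\sqrt{\Dt}$ at each visit to the boundary region; since $w$ is bounded, the total number of such visits is then $O(1/\sqrt{\Dt})$.

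First I would fix the test function. By {\bf(H1)} and the regularity of the distance proved in the Appendix, the signed distance $\rho$ to $\partial\OO$ (negative in $\OO$) is $C^3$ in a tubular neighborhood $(\partial\OO)_{R_0}$, with $D\rho=n$ on $\partial\OO$ and $D\rho(x)=n(q)$ for the nearest boundary point $q$. I take $w\in C^3(\ov\OO)$ bounded, coinciding with $-\rho$ on $(\partial\OO)_{R_0/2}\cap\ov\OO$, so that $Dw(x)=-n(q)$ near $\partial\OO$. I also set $c_0:=\min_{(x,b)\in\partial\OO\times B}\langle n(x),\gamma_b(x)\rangle$, which is strictly positive by {\bf(H3)} and compactness.

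Second, I would prove the key one-step estimate: there exist $c>0$ and $C>0$, independent of $(\Dt,\Dx)$, such that for all $k\in\I^*_{\Dt}$, $i\in\I_{\Dx}$, $a\in A$, $b\in B$,
\be\label{eq:onestepdrift}
\frac{1}{2N_{\sigma}}\sum_{s\in\I} I[w|_{\G_{\Dx}}]\big(\tilde y^{s}_{k,i}(a,b)\big)-w(x_i)\;\geq\; c\sqrt{\Dt}\,\mathbb{I}_{\Gamma_k(a)}(x_i)-C\Dt.
\ee
By the transition law \eqref{transition_probabilities_neumann_case}--\eqref{transition_probabilities}, the left-hand side is exactly $\EE[w(X_{m+1})\mid X_m=x_i]-w(x_i)$ with $a=\alpha_m$, $b=\beta_m$. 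To obtain \eqref{eq:onestepdrift} I run the Taylor computation of the proof of Proposition~\ref{consistency_scheme}(i) with $\phi=w$; the additive $\Dt f$ and $\tilde g$ contributions cancel against those in \eqref{eq:scheme_fullydiscrete_case_linear}, leaving the drift and diffusion terms $O(\Dt)$, the terms $\sqrt{\Dt}\,\tilde d^{s}_{k,i}K^{s}_{k,i}=O(\Dt)$ (since $\tilde d^{s}_{k,i}=O(\sqrt{\Dt})$ and $K^{s}_{k,i}$ is bounded), and, by Lemma~\ref{Interp_estimate}, an interpolation error $O((\Dx)^2)=O(\Dt)$ because $(\Dx)^2/\Dt$ is bounded. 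The only term of order $\sqrt{\Dt}$ is the reflection term $-\frac{1}{2N_{\sigma}}\sum_{s}\tilde d^{s}_{k,i}\langle Dw(x_i),\tilde\gamma^{s}_{k,i}\rangle$. If $x_i\in\Gamma_k(a)$ then, since $\mu,\sigma$ are bounded by {\bf(H2)}, $x_i$ lies within $O(\sqrt{\Dt})$ of $\partial\OO$, at least one $s$ is reflected with $\tilde d^{s}_{k,i}\geq\bar c\sqrt{\Dt}$, and $\tilde\gamma^{s}_{k,i}=\gamma_b(p^{\gamma_b}(y^{s}_{k,i}(a)))$; as $Dw(x_i)=-n(q)$ for a boundary point $q$ within $O(\sqrt{\Dt})$ of $p^{\gamma_b}(y^{s}_{k,i}(a))$, continuity of $n$ and $\gamma_b$ gives $\langle Dw(x_i),\tilde\gamma^{s}_{k,i}\rangle\leq -c_0/2$ for $\Dt$ small. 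Since every $\tilde d^{s}_{k,i}\geq0$, the reflection term is bounded below by $\frac{\bar c\,c_0}{4N_{\sigma}}\sqrt{\Dt}$, yielding \eqref{eq:onestepdrift} with $c=\frac{\bar c\,c_0}{4N_{\sigma}}$; if $x_i\notin\Gamma_k(a)$ all $\tilde d^{s}_{k,i}=0$ and the left-hand side is $O(\Dt)\geq -C\Dt$.

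Finally, I would sum \eqref{eq:onestepdrift} along the chain. Taking conditional expectations with $a=\alpha_m$, $b=\beta_m$ and applying the tower property,
\be\label{eq:dynkinsum}
\EE_{\PP^{k,x_{i},\pi}}\!\big[w(X_{N_{\Dt}})\big]-w(x_i)\;\geq\; c\sqrt{\Dt}\,\EE_{\PP^{k,x_{i},\pi}}\!\Big[\sum_{m=k}^{N_{\Dt}-1}\mathbb{I}_{\Gamma_m(\alpha_m)}(X_m)\Big]-C\Dt\,(N_{\Dt}-k).
\ee
Since $w$ is bounded and $\Dt\,N_{\Dt}\leq T$, the left-hand side is at most $2\|w\|_{\infty}$ and $C\Dt(N_{\Dt}-k)\leq CT$; rearranging gives $\EE[\sum_{m}\mathbb{I}_{\Gamma_m(\alpha_m)}(X_m)]\leq(2\|w\|_{\infty}+CT)/(c\sqrt{\Dt})$, uniformly in $k$, $i$, and $\pi$, which is the claim. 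The main obstacle is the one-step estimate \eqref{eq:onestepdrift}, and within it the quantitative lower bound on the inward push: one must ensure that the genuine $\sqrt{\Dt}$ gain produced by the reflection---controlled through {\bf(H3)} and the $C^3$ regularity of $\rho$---dominates both the $O(\Dt)$ drift/diffusion contribution and the $O((\Dx)^2)$ interpolation error, which is exactly where the hypothesis that $(\Dx)^2/\Dt$ remains bounded is used.
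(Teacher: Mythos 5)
Your proof is correct and follows the same strategy as the paper: a Milstein-type Lyapunov argument combining a one-step drift estimate of order $\sqrt{\Dt}$ at each visit to $\Gamma_k(a)$ with a telescoping (Dynkin) identity through the probabilistic interpretation of the scheme, concluding by boundedness of the test function. The one substantive difference is the choice of test function. The paper takes $w(x)=d^{2}(x,D_\eta)$, the squared distance to the set $D_\eta$ of points at distance greater than $\eta$ from $\partial\OO$, augmented by the time-dependent term $M(T-t_k)$ with $M$ large; since this $w$ is only piecewise $C^3$ (it is not smooth across $\partial D_\eta$), the paper's proof requires a three-case analysis, including a separate direct interpolation estimate for grid points close to $\partial D_\eta$ where the Taylor expansion of Proposition~\ref{consistency_scheme} cannot be invoked. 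You instead take a globally $C^3$ cut-off of the distance to $\partial\OO$, which lets the consistency expansion apply uniformly at every grid point and eliminates that case analysis; your additive $-C\Dt$ error, summed to $-CT$ over the horizon, plays exactly the role of the paper's $M(T-t_k)$ term (your inequality is of submartingale type, the paper's of supermartingale type, which is immaterial). Both arguments ultimately rest on the same two quantitative inputs: the lower bound $\tilde d^{s}_{k,i}(a,b)\geq \bar c\sqrt{\Dt}$ for reflected characteristics from \eqref{definition_c}, and the uniform positivity of $\langle n,\gamma_b\rangle$ from \textbf{(H3)}, combined with the $O(\sqrt{\Dt})$ proximity of $p_{\partial\OO}(x_i)$ and $p^{\gamma_b}(y^{s}_{k,i}(a))$, so your variant is a legitimate and slightly streamlined rendering of the paper's proof.
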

\begin{proof} The argument of the proof is inspired from  \cite[Lemma 1]{Milstein96}.
Let $\eps>0$,  set
$$
D_\eps =  \{ x \in \overline \OO \; | \; d(x, \partial \OO) > \eps \}, \quad  \partial D_\eps =  \{ x \in \overline \OO \; | \; d(x, \partial \OO) =\eps \},$$ 
$$ L_{\eps}=  \{ x \in \overline \OO \; | \; d(x, \partial \OO)  \le  \eps \},
$$
and define $\ov{\OO} \ni  x\mapsto  w_\eps(x)=d^2\left(x,D_{\eps}\right)\in \RR$.
By Lemma \ref{lemma:dist1}{\rm(v)} in the Appendix, there exists $\eta>0$ such that $w_{\eta} \in  C^3(\overline{\OO}\setminus  \partial D_{\eta})$ with bounded third order derivatives on the connected components of $\overline{\OO}\setminus  \partial D_{\eta}$. Let us fix this $\eta$ and, for notational convenience, let us write $w=w_\eta$.  Let $M>0$ and, for any $k \in \I_{\Delta t}$, define  
\be
\ov{\OO} \ni x\mapsto    
W_k(x)= \begin{cases}
M(T - t_k) + w(x) \quad&\mbox{if } k \in  \I^*_{\Delta t},\\
0  &\mbox{if } k=N_{\Delta t} 
\end{cases}\in \RR.
\ee
By \eqref{eq:scheme_fullydiscrete_case_linear}, with $f\equiv0$ and $g\equiv0$, for all $a\in A$ and $b\in B$,  we have 
\begin{eqnarray} 
\mathcal{S}_{k,i}[W_{k+1}|_{\G_{\Dx}}](a,b) - W_k(x_i) &=& -M\Dt +\mathcal{S}_{k,i}[w|_{\G_{\Dx}}](a,b)  - w (x_i), \label{eq:pw-w1d} \\
\; & = &  -M\Dt +\frac{1}{2N_{\sigma}}\sum_{s\in \I}I[w](\tilde{y}_{k,i}^s(a,b))- w (x_i).\label{eq:pw-w2d}
\end{eqnarray}
Moreover, assumption {\bf(H2)} implies the existence of $\ov{C}>0$  such that
\be\label{displacement_bound}\sup\left\{  |y_{k,i}^{s}(a)-x_i| \; \bigg| \; k \in \I^{*}_{\Dt}, \, i\in \I_{\Dx}, \, a\in A, \, s \in \I\right\} \leq \ov{C}\sqrt{\Dt}.
\ee
Now, let us fix  $k \in \I_{\Delta t}^{*}$, $i \in \I_{\Delta x}$, $a\in A$, and $b\in B$. We have the following cases.

{\rm(i)} \underline{$x_i \notin \Gamma_k (a)$ and $d(x_i,\partial D_{\eta})\geq \ov{C}\sqrt{\Dt}$}. The first condition implies that $y^{s}_{k,i}(a)\in \overline\OO$,  for any $s\in \I$, and, hence, \eqref{definizione_R_n} yields $\tilde{y}_{k,i}^s(a,b) =y^{s}_{k,i}(a)$. The condition $d(x_i,\partial D_{\eta})\geq \ov{C}\sqrt{\Dt}$, \eqref{displacement_bound}, and  standard error estimates for $\PP_1$ interpolation (see for instance \cite{ciarlet}), imply that
$$
I[w](\tilde{y}_{k,i}^s(a,b))= w(\tilde{y}_{k,i}^s(a,b)) + O((\Delta x)^2)=w(y^{s}_{k,i}(a)) + O((\Delta x)^2).
$$
Since, by second order Talyor expansion, $ \frac{1}{2N_{\sigma}}\sum_{s\in \I}w(y_{k,i}^s(a))-w(x_i)=O(\Delta t)$,   \eqref{eq:pw-w2d} yields  
\be\label{eq:stimadentro}
\mathcal{S}_{k,i}[W_{k+1}|_{\G_{\Dx}}]( a, b)- W_k(x_i) =  -M \Dt + O \left( \Dt + (\Delta x )^2 \right).
\ee

{\rm(ii)} \underline{$x_i \notin \Gamma_k (a)$ and $d(x_i,\partial D_{\eta})< \ov{C}\sqrt{\Dt}$}. Condition $d(x_i,\partial D_{\eta})< \ov{C}\sqrt{\Dt}$  and  \eqref{displacement_bound} imply that $w(x_i)=O(\Delta t)$ and, for any $s\in \I$, $d^2(y^{s}_{k,i}(a),\partial D_{\eta})=O(\Delta t)$. Since the cardinality of  $\J:= \{ j \in \I_{\Delta x} \, | \,  \psi_j(y^{s}_{k,i}(a)) >0\}$ is independent of $\Dx$  and, for all $j \in \J$, $| y^{s}_{k,i}(a)-x_j| =O(\Dx)$, we deduce that 
$$\ba{rcl}
I[w](y^{s}_{k,i}(a))&=&\sum_{j\in \J}\psi_j(y^{s}_{k,i}(a))w(x_j)\\[5pt]
\; & \leq&\sum_{j\in \J}\psi_j(y^{s}_{k,i}(a))d^2(x_j,\partial D_{\eta})\\[5pt]
\; &=&\sum_{j\in \J}\psi_j(y^{s}_{k,i}(a))d^2(y^{s}_{k,i}(a),\partial D_{\eta})+O((\Dx)^2)\\[5pt]
\; & =& O( \Delta t + (\Dx)^2).
\end{array}$$
Thus, since $\tilde{y}_{k,i}^s(a,b) =y^{s}_{k,i}(a)$, \eqref{eq:pw-w2d} implies that \eqref{eq:stimadentro} still holds.  

{\rm(iii)} \underline{$ x_i \in   \Gamma_k (a)$.} Let $0<\delta<\eta$. Since $\mu$ and $\sigma$ are bounded, there exists $\ov{\Delta t}>0$, independent of $k$, $i$ and $a$, such that 
\be\label{gamma_k_inc}
\Gamma_k (a) \subseteq L_{\delta}\subset L_{\eta},\ee 
if $\Delta t \leq \ov{\Delta t}$. By 
\eqref{eq:pw-w1d} and  Proposition~\ref{consistency_scheme}{\rm (i)},  with $f\equiv0$ and $g\equiv0$,  we have  
\be\label{eq:pw-w4d}
\ba{ll}
\mathcal{S}_{k,i}[W_{k+1}|_{\G_{\Dx}}]( a, b) - W_k(x_i)=\\[8pt]
\hspace{1cm} -M\Dt  - \frac{1}{2N_{\sigma}} \sum_{s\in \I} \tilde{d}^{s}_{k,i}(a,b) \left\langle Dw (x_i),\gamma_b\left(p^{\gamma_b} \left(y^{s}_{k,i}(a)\right) \right) \right\rangle\\[8pt]
\hspace{1cm}  +O \left( \Dt + (\Delta x )^2 \right).
\ea
\ee
By Lemma~\ref{lemma:dist1}{\rm(v)} in the Appendix,  for any $x \in L_\eta$, we have $d\left(x, \partial D_\eta \right) = \eta - d(x, \partial \OO)$. Thus,  Lemma~\ref{lemma:dist1}{\rm(ii)} implies that  $Dd\left(x, \partial D_\eta \right)=n(p_{\partial \OO}(x))$, and  hence  
\be\label{derivata_w_prova}
Dw(x_i) = 2d\left(x_i, \partial D_\eta \right)Dd\left(x_i, \partial D_\eta \right) = 2d\left(x_i, \partial D_\eta \right)n(p_{\partial \OO}(x)).
\ee
On the other hand, in view  of \cite[Proposition 1.1{\rm(v)}]{gobet2001}, there exists $C>0$ such that $|d^{\gamma_b} (x_i)|\leq C d(x_i,\partial \OO)$.
Thus,
$$
\begin{aligned}
\lvert p^{\gamma_b} (x_i) - p_{\partial \OO}( x_i) \rvert   & \leq  \lvert p^{\gamma_b} (x_i) - x_i \rvert + \lvert x_i - p_{\partial \OO}(x_i) \rvert=|d^{\gamma_b} (x_i)|  +  d(x_i,\partial \OO) \\
&\leq (C+1) d(x_i,\partial \OO).
\end{aligned}
$$
Since $x_i\in \Gamma_k(a)$, we have $d(x_i,\partial \OO)=O(\sqrt{\Delta t})$ and hence $\lvert p^{\gamma_b} (x_i) - p_{\partial \OO}( x_i) \rvert=O(\sqrt{\Dt})$.  Proposition \ref{oblique_projection_well_defined}  implies that $\gamma_b$ and $p^{\gamma_b}$ are Lipschitz and hence, for any $s\in \I$,
\be\label{eq:gammay}
\gamma_b\left(p^{\gamma_b} \left(y^{s}_{k,i}(a)\right) \right) = \gamma_b \left(p^{\gamma_b} (x_i)\right) + O\left(\sqrt{\Dt}\right) = \gamma_b \left(p_{\partial \OO}(x_i)\right) + O \left(\sqrt{\Dt}\right).
\ee
Since, for all $s\in \I$, $ \tilde{d}^{s}_{k,i}(a,b) = O (\sqrt{\Dt})$, from \eqref{eq:pw-w4d}-\eqref{eq:gammay} we obtain 
\be\label{eq:pw-w4_2d}
\ba{l}
\mathcal{S}_{k,i}[W_{k+1}|_{\G_{\Dx}}]( a,b) - W_k(x_i)  =\\ [6pt]
\hspace{2cm} -M\Dt  -\frac{1}{N_{\sigma}} \sum_{s\in \I}d\left(x_i, \partial D_\eta \right) \tilde{d}^{s}_{k,i}(a,b)\big\langle 
n(p_{\partial \OO}(x_i)),\gamma_b \left( p_{\partial \OO}(x_i)\right) \big\rangle  \\ [6pt]
\hspace{2cm}+O\left(   \Dt +(\Delta x )^2 \right).
\ea
\ee
Since $x_i\in\Gamma_k(a)$, there exists $\widetilde \I_{k,i} \subset \I \neq \emptyset$ such that $\tilde{d}^{s}_{k,i}(a,b) > 0$, for  any $s\in \widetilde \I_{k,i} $. In addition,   \eqref{gamma_k_inc} implies  that $d\left(x_i,\partial D_\eta \right)\geq \eta - \delta>0$.
Thus,  assumption {\bf (H3)} implies that 
$$
\mathcal{S}_{k,i}[W_{k+1}|_{\G_{\Dx}}](a, b)- W_k(x_i)  \leq -M \Dt -\frac{\nu(\eta - \delta)}{N_{\sigma}} \sum_{s \in \widetilde \I_{k,i}}  \tilde{d}^{s}_{k,i}(a,b)+ O\left(\Dt +(\Delta x )^2 \right),
$$
and hence \eqref{definition_c} yields the existence of $C>0$, independent of $k\in \I_{\Dt}^*$, $i\in \I_{\Dx}$, $a\in A$, and $b\in B$,   such that 
\be\label{eq:pw-w4_4d}
\mathcal{S}_{k,i}[W_{k+1}|_{\G_{\Dx}}](a, b)- W_k(x_i)  \leq -M \Dt - C \sqrt{\Dt}+ O\left(   \Dt +(\Delta x )^2 \right).
\ee
As long as  $(\Delta x )^2 /\Dt$ is bounded, we have that $ O\left(\Dt +(\Delta x )^2 \right)=O(\Delta t)$. Thus, from cases {\rm(i)}-{\rm(iii)} we can choose $M$ large enough such that 
\be\label{eq:pw-w4_4daasa}
\mathcal{S}_{k,i}[W_{k+1}|_{\G_{\Dx}}](a, b)- W_k(x_i)  \leq  - C \sqrt{\Dt}\II_{\Gamma_{k}(a)}(x_i). 
\ee
Now, set $q_{k}(x_i,a,b) =W_k(x_i) -\mathcal{S}_{k,i}[W_{k+1}|_{\G_{\Dx}}](a, b)$. Then the probabilistic interpretation of the operator $\mathcal{S}_{k,i}$ (see Sect.~\ref{Sec:probabilistic}) implies that, for any policy $\pi\in \Pi_{N_{\Delta t}}$,
$$
W_k(x_i)=\EE_{\PP^{k,x_{i},\pi}}\left( \sum_{m=k}^{N_{T}-1}  q_{m}\big(X_{m},\alpha_{m},\beta_m\big) + w\big(X_{N_{T}}\big)  \right).
$$
Since \eqref{eq:pw-w4_4daasa} implies that $q_{k}(x_i,a,b)\geq C \sqrt{\Dt}\II_{\Gamma_{k}(a)}(x_i)$  for $k\in \I_{\Dt}^{*}$, $i\in \I_{\Dx}$, $a\in A$ and $b\in B$, we deduce that for any policy $\pi\in \Pi_{N_{\Delta t}}$ we have 
$$
\ba{rcl}
\EE_{\PP^{k,x_{i},\pi}}\left( \sum_{m=k}^{N_{T}-1} \mathbb{I}_{\Gamma_m \left(\alpha_{m}\right)}\big(X_{m}\big)   \right) &\leq& \ds \frac{1}{C\sqrt{\Delta t}}\EE_{\PP^{k,x_{i},\pi}}\left( \sum_{m=k}^{N_{T}-1}  q_{m}\big(X_{m},\alpha_{m},\beta_m\big)   \right)\\[12pt]
\; &=& \ds \frac{W_{k}(x_i)-\EE_{\PP^{k,x_{i},\pi}}\big(w\big(X_{N_{T}}\big) \big)}{C \sqrt{\Dt}}.
\ea
$$
Finally, using that $W_{k}$ and $w$ are bounded, \eqref{sojourn_time} follows. 
\end{proof}
\begin{proposition}\label{stability}{\rm(Stability)}
The fully discrete scheme \eqref{eq:def_fullydiscrete} is stable, i.e. there exists $C>0$  such that 
\be\label{eq:stability_result_constant}\underset{k\in \I^{*}_{\Dt}, \, i\in \I_{\Delta x} }\max \lvert U_{k,i} \rvert \leq C,
\ee
where $C$ is independent of $(\Dt, \Dx)$ as long as $\Delta t$ is small enough and $(\Delta x )^2 /\Dt$ is bounded. 
\end{proposition}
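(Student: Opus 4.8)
The plan is to exploit the probabilistic interpretation of the scheme established in Sect.~\ref{Sec:probabilistic}, namely the identity $U_{k,i}=\inf_{\pi\in\Pi_{N_{\Dt}}}J_{k,i}(\pi)$, and to bound $|J_{k,i}(\pi)|$ uniformly in $\pi$, $k$, $i$ and in $(\Dt,\Dx)$. Since such a bound holds for \emph{every} policy, it passes to the infimum and yields simultaneously $U_{k,i}\le C$ and $U_{k,i}\ge -C$, which is exactly \eqref{eq:stability_result_constant}. Recalling that
\[
J_{k,i}(\pi)=\EE_{\PP^{k,x_i,\pi}}\!\left(\sum_{m=k}^{N_{\Dt}-1}\big[\Delta t\, f(t_m,X_m,\alpha_m)+h(t_m,X_m,\alpha_m,\beta_m)\big]+\Psi(X_{N_{\Dt}})\right),
\]
assumption {\bf(H2)} guarantees that $f$, $g$ and $\Psi$ are bounded (continuous on compact sets); writing $\|f\|_\infty$, $\|g\|_\infty$, $\|\Psi\|_\infty$ for their sup-norms, the running-cost term is at once controlled by $\sum_{m=k}^{N_{\Dt}-1}\Delta t\,\|f\|_\infty\le T\|f\|_\infty$ and the terminal term by $\|\Psi\|_\infty$. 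The only term requiring genuine work is the accumulated reflection cost $\sum_m h(t_m,X_m,\alpha_m,\beta_m)$.

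For this term I would argue as follows. By \eqref{def:h_aux}, $h(t_m,X_m,\alpha_m,\beta_m)$ vanishes unless the stochastic characteristic $y_m(X_m,\alpha_m)$ leaves $\ov\OO$; when it does, the realized index $s\in\I$ satisfies $y^{s}_{m}(\alpha_m)\notin\ov\OO$, so that $X_m\in\Gamma_m(\alpha_m)$ by \eqref{gamma_k}. On this event, the displacement bound \eqref{displacement_bound} together with \cite[Proposition 1.1(v)]{gobet2001} gives $|d^{\gamma_b}(y_m(X_m,\alpha_m))|\le C\,d(y_m(X_m,\alpha_m),\partial\OO)=O(\sqrt{\Dt})$, exactly as in the consistency estimate where one has $0\le\tilde d^{s}_{k,i}(a,b)\le C\sqrt{\Dt}$. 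Hence, using the boundedness of $g$,
\[
|h(t_m,X_m,\alpha_m,\beta_m)|\le C\sqrt{\Dt}\,\|g\|_\infty\,\II_{\Gamma_m(\alpha_m)}(X_m).
\]
Taking expectations, summing in $m$, and invoking the sojourn-time estimate of Lemma~\ref{lemma:milstein_stab_d} produces the crucial cancellation:
\[
\EE_{\PP^{k,x_i,\pi}}\!\left(\sum_{m=k}^{N_{\Dt}-1}|h(t_m,X_m,\alpha_m,\beta_m)|\right)\le C\sqrt{\Dt}\,\|g\|_\infty\,\EE_{\PP^{k,x_i,\pi}}\!\left(\sum_{m=k}^{N_{\Dt}-1}\II_{\Gamma_m(\alpha_m)}(X_m)\right)\le C\|g\|_\infty,
\]
the factor $\sqrt{\Dt}$ measuring the size of a single reflection exactly absorbing the factor $1/\sqrt{\Dt}$ that counts the expected number of boundary excursions.

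Combining the three estimates gives $|J_{k,i}(\pi)|\le T\|f\|_\infty+C\|g\|_\infty+\|\Psi\|_\infty=:C$, uniformly in $\pi$, $k$, $i$ and in $(\Dt,\Dx)$ under precisely the regime in which Lemma~\ref{lemma:milstein_stab_d} applies (that is, $\Delta t$ small and $(\Dx)^2/\Dt$ bounded). Passing to the infimum over $\pi$ then yields \eqref{eq:stability_result_constant}. I expect the \emph{main obstacle} to be entirely contained in Lemma~\ref{lemma:milstein_stab_d}: a naive bound would estimate the reflection cost by $N_{\Dt}\cdot O(\sqrt{\Dt})\sim 1/\sqrt{\Dt}$, which blows up; the whole content of stability is that the expected number of visits to the reflection zone $\Gamma_m(\alpha_m)$ is only $O(1/\sqrt{\Dt})$, so that the product stays $O(1)$. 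Once that sojourn bound is granted, the present proof is a routine assembly.
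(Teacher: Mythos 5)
Your proposal is correct and follows essentially the same route as the paper's proof: bound $|J_{k,i}(\pi)|$ uniformly in $\pi$ via the probabilistic interpretation, control the $f$ and $\Psi$ terms by $T\|f\|_\infty$ and $\|\Psi\|_\infty$, bound each reflection cost by $C\sqrt{\Dt}\,\|g\|_\infty\,\II_{\Gamma_m(\alpha_m)}(X_m)$, and absorb the expected $O(1/\sqrt{\Dt})$ boundary visits via Lemma~\ref{lemma:milstein_stab_d}. Your identification of the sojourn-time lemma as the sole genuine obstacle, with the rest being routine assembly, matches the structure of the paper's argument exactly.
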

\begin{proof}
 Let us fix $k \in \I^*_{\Dt}$ and $i\in \I_{\Delta x}$. Then the probabilistic interpretation of the scheme in Sect.~\ref{Sec:probabilistic} and the definition of $h$ in \eqref{def:h_aux} imply the existence of a constant $C>0$ such that 

$$
\ba{rcl}
|U_{k,i}|&\leq& \ds \sup_{\pi \in \Pi_{N_{\Delta t}}}  \EE_{\PP^{k,x_{i},\pi}}\Big(  \sum_{m=k}^{N_{\Dt}-1} \left[ \Dt\big|f (t_m,X_m,\alpha_m)\big| \right.\ \\[13pt]
& & \hspace{0.5cm}  +\left.\big|h(t_m, X_{m},\alpha_m,\beta_{m}\big)\big|\right]+ \big|\Psi\big(X_{N_{\Dt}}\big)\big| \Big)\\[6pt]
 &\leq&  \|\Psi\|_{\infty}+T\|f\|_{\infty}+ C\sqrt{\Dt}\|g\|_{\infty} \ds  \sup_{\pi \in \Pi_{N_{\Delta t}}}  \EE_{\PP^{k,x_{i},\pi}}\left(\sum_{m=k}^{N_{\Dt}-1} \II_{\Gamma_m ( \alpha_{m})}\left(X_m\right)\right).
\end{array}
$$

Thus, \eqref{eq:stability_result_constant} follows from Lemma \ref{lemma:milstein_stab_d}.
\end{proof}
\section{Convergence analysis}\label{convergence_analysis}
In this section we provide the main result of this article which is the convergence of solutions to \eqref{eq:def_fullydiscrete} to the unique viscosity solution of \eqref{hjb_continuous}. The proof is based on the half-relaxed limits technique introduced in \cite{BS91} and the properties of solutions to \eqref{eq:def_fullydiscrete} investigated in Sect.~ \ref{properties_fully_discrete_scheme}.

Let $\Delta t>0 $, let $\Delta x>0$ and let $(U_k)_{k=0}^{N_{\Dt}}$ be the solution to \eqref{eq:def_fullydiscrete} associated to the discretization parameters $\Delta t$ and $\Delta x$. Let us define  an extension  of $(U_k)_{k=0}^{N_{\Dt}}$ to  $\ov{\OO}_T$ by 
\be\label{definition_extension_of_the_discrete_solutions}
(\forall \; (t,x) \in \ov{\OO}_T) \quad u_{\Dt,\Delta x} (t,x):=I[U_{\floor{t/\Dt}}](x),
\ee
where we recall that the interpolation operator $I[\cdot]$ is defined in \eqref{interpolation}. Now, let $(\Dt_n,\Delta x_n)_{n\in \NN} \subseteq (0,+\infty)^2$ be  such that $\lim_{n\to \infty}(\Dt_n,\Delta x_n)=(0,0)$ and the sequence $(\Delta x_n/ \Delta t_n)_{n\in \NN}$ is bounded. For every $(t,x)\in \ov{\OO}_{T}$, let us define 
\be\label{definition_relaxed_limits}
\begin{split}
\uo (t,x):= \underset{ \substack{ n\to \infty\\ \ov{\OO}_T \ni (s_n,y_n)\to (t,x)}}\limsup \; u_{\Dt_n,\Delta x_n}(s_n,y_n), \\
\uu(t,x):= \underset{ \substack{ n\to \infty\\ \ov{\OO}_T \ni (s_n,y_n)\to (t,x)}}\liminf\; u_{\Dt_n,\Delta x_n}(s_n,y_n).
\end{split}
\ee
From Proposition~\ref{stability} we deduce that $\uo \colon \ov{\OO}_{T}\to \RR$ and $\uu\colon \ov{\OO}_{T}\to \RR$ are well-defined and bounded. Moreover, from \cite[Chapter V, Lemma 1.5]{BardiCapuzzo96}, we have that   $\uo$ and $\uu$ are, respectively, upper and lower semicontinuous functions.
\begin{proposition} \label{Prop:Viscositysub-super} Assume that $(\Delta x_{n})^{2}/\Dt_n \to 0$, as $n \to \infty$.   Then $\uo$ and  $\uu$ are,   respectively,  viscosity sub- and supersolutions to \eqref{hjb_continuous}. 
\end{proposition}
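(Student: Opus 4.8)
The plan is to use the Barles--Souganidis half-relaxed limits method, which reduces the convergence statement to verifying that the relaxed upper and lower limits $\uo$ and $\uu$ satisfy the subsolution and supersolution inequalities of Definition~\ref{def:visc_sol}. By symmetry I will only treat the subsolution property for $\uo$; the supersolution property for $\uu$ is entirely analogous, reversing inequalities and replacing $\sup$ arguments by $\inf$ arguments. So fix $(t,x)\in\ov{\OO}_T$ and a test function $\phi\in C^2(\ov{\OO}_T)$ such that $\uo-\phi$ has a strict local maximum at $(t,x)$, with $\uo(t,x)=\phi(t,x)$ (one may always reduce to a strict maximum by adding a quartic penalization, and subtract a constant). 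The standard argument produces, along the subsequence realizing the $\limsup$ in \eqref{definition_relaxed_limits}, a sequence of grid nodes $(k_n,i_n)$ with $(t_{k_n},x_{i_n})\to(t,x)$ at which $u_{\Dt_n,\Delta x_n}-\phi$ attains a local maximum and at which $u_{\Dt_n,\Delta x_n}(t_{k_n},x_{i_n})\to\uo(t,x)$.

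First I would exploit monotonicity (Proposition~\ref{monotonicity}) at this maximizing node. Since $U_{k_n}-\phi(t_{k_n},\cdot)$ has a maximum at $x_{i_n}$ over the grid $\G_{\Delta x}$, we have $U_{k_n+1,(\cdot)}\le \phi(t_{k_n+1},\cdot)+c_n$ on $\G_{\Delta x}$ for the appropriate constant $c_n=(U-\phi)(t_{k_n},x_{i_n})-(\text{value at the next slice})$, and applying the scheme together with monotonicity and commutation by constants yields
\[
U_{k_n,i_n}=S_{k_n,i_n}\big[U_{k_n+1,(\cdot)}\big]\le S_{k_n,i_n}\big[\phi(t_{k_n+1},\cdot)|_{\G_{\Dx}}\big]+c_n.
\]
Rearranging gives a one-sided inequality comparing the scheme increment $S_{k_n,i_n}[\phi]-\phi(t_{k_n},x_{i_n})$ (in the space variable, at time $t_{k_n+1}$) against the discrete time difference of $\phi$. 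The next step is to insert the consistency expansion of Proposition~\ref{consistency_scheme}\eqref{consistency_nonlinear} for $S_{k_n,i_n}$, divide by $\Dt_n$, and pass to the limit $n\to\infty$ using $(t_{k_n},x_{i_n})\to(t,x)$, the continuity of $\H$, and the boundedness of the constants $K^s_{k_n,i_n}$.

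The main obstacle, and the genuinely new point compared with the case $\OO=\RR^N$, is the treatment of the boundary term
\[
\frac{1}{2N_\sigma}\sum_{s\in\I}\tilde d^{s}_{k_n,i_n}(a,b)\Big(\tilde\L_{k_n,i_n}^{s}(D\phi(x_{i_n}),a,b)-\sqrt{\Dt_n}\,K^s_{k_n,i_n}(a,b)\Big)
\]
appearing in the consistency expansion. Here one must distinguish whether $(t,x)$ lies in the interior $\OO_T$ or on the lateral boundary $[0,T)\times\partial\OO$. If $x\in\OO$, then for $n$ large the node $x_{i_n}$ is interior and, by the bound $d(x_{i_n},\partial\OO)=O(\sqrt{\Dt_n})$ failing, all $y^s_{k_n,i_n}(a)\in\ov\OO$, so every $\tilde d^s_{k_n,i_n}$ vanishes and the boundary term drops out, leaving exactly the interior inequality $-\partial_t\phi+\sup_a\H(t,x,D\phi,D^2\phi,a)\le 0$, i.e. \eqref{subsolution_inside}. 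If $x\in\partial\OO$, the term need not vanish; one argues that $\tilde d^s/\sqrt{\Dt_n}$ stays bounded below and above (via \eqref{definition_c} and the oblique-projection estimates of Proposition~\ref{oblique_projection_well_defined} and \cite{gobet2001}), while the correction $\sqrt{\Dt_n}K^s\to0$, so that after multiplying the rearranged inequality by a suitable power of $\Dt_n$ one extracts \emph{either} the interior Hamiltonian inequality \emph{or} the boundary inequality $\L(t,x,D\phi(x),b)\le0$ for some admissible $b$, which is precisely the mixed condition \eqref{subsolution_boundary}. Making this dichotomy rigorous requires a careful scaling argument, splitting according to whether the positive boundary increments $\tilde d^s_{k_n,i_n}$ stay bounded away from zero or degenerate, and passing to a further subsequence in each case.

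Finally I would handle the terminal slice $\{T\}\times\ov\OO$: either the test-function maximum forces $k_n=N_{\Dt_n}$, in which case $U_{N_{\Dt_n},i_n}=\Psi(x_{i_n})$ gives directly $\uo(T,x)\le\Psi(x)$ and hence \eqref{final_time_subsolution}, or $k_n<N_{\Dt_n}$ and the preceding interior/boundary analysis applies, so that the relaxed final-time condition of Remark~\ref{final_time_weak_condition} holds. Combining the three cases shows $\uo$ is a subsolution; the symmetric computation for $\uu$ completes the proof.
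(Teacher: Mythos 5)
Your proposal follows the same strategy as the paper's proof: Barles--Souganidis half-relaxed limits, transfer of the maximum to the scheme via the monotonicity and commutation properties of Proposition~\ref{monotonicity}, insertion of the consistency expansion of Proposition~\ref{consistency_scheme}\eqref{consistency_nonlinear}, a dichotomy at lateral boundary points according to whether the reflected increments $\tilde{d}^{s}_{k_n,i_n}$ vanish or are bounded below by $\bar{c}\sqrt{\Dt_n}$, and the terminal-slice discussion via Remark~\ref{final_time_weak_condition}. The only structural deviation is that you work at grid-node maxima, whereas the paper keeps continuum maximum points $(s_n,y_n)$ and transfers their information to the grid through the convex interpolation weights $\psi_i$ (inequality \eqref{eq:dis_mon_fd}); your variant is standard and workable, provided the node is a global space-time grid maximum so that the next-slice comparison holds on all of $\G_{\Dx}$.

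There is, however, a genuine gap in your boundary case. You claim to extract ``the boundary inequality $\L(t,x,D\phi(x),b)\le 0$ for \emph{some} admissible $b$, which is precisely the mixed condition \eqref{subsolution_boundary}''. It is not: by \eqref{B_sup} the boundary operator is $L(t,x,p)=\sup_{b\in B}\left\{\langle \gamma_b(x),p\rangle - g(t,x,b)\right\}$, so when the Hamiltonian part of the min is positive one must show $\L(\bar{t},\bar{x},D\phi(\bar{t},\bar{x}),b)\le 0$ for \emph{every} $b\in B$; an inequality for a single $b$ does not bound the supremum and \eqref{subsolution_boundary} does not follow (your argument would close the proof only if $B$ were a singleton). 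The missing observation, which the paper exploits, is that the scheme \eqref{eq:scheme_fullydiscrete_case} is an infimum over \emph{both} controls, so the inequality produced by monotonicity plus consistency (inequality \eqref{eq:fix_a_discr} in the paper) holds for every fixed pair $(a,b)$. One then fixes $a=\bar{a}$, the maximizer of the Hamiltonian at the limit point, notes that whether a characteristic exits $\ov{\OO}$ --- and hence the dichotomy itself --- does not depend on $b$, and performs the rescaling limit (division by $\tilde{d}_n^*\ge \bar{c}\sqrt{\Dt_n}$, using $(\Delta x_n)^2/\Dt_n\to 0$) separately for each fixed $b\in B$. This yields $\L(\bar{t},\bar{x},D\phi(\bar{t},\bar{x}),b)\le 0$ for all $b$, hence $L(\bar{t},\bar{x},D\phi(\bar{t},\bar{x}))\le 0$, which combined with the other branch of the dichotomy gives \eqref{subsolution_boundary}.
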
 

\begin{proof} We only show that $\uo$ is a viscosity subsolution to  \eqref{hjb_continuous}, the proof that $\uu$ is a viscosity supersolution being similar. Let  $(\bar{t},\bar{x})  \in  \ov{\OO}_T$ and $\phi \in C^{\infty}( \ov{\OO}_T)$ be such that $\ov u(\bar{t},\bar{x})=\phi(\bar{t},\bar{x})$ and $\ov u- \phi$ has a maximum at $(\bar{t},\bar{x})$. Then by \cite[Chapter V, Lemma 1.6]{BardiCapuzzo96}  there exists a subsequence of $(u_{\Dt_n,\Delta x_n} )_{n\in \NN}$, which for simplicity is still labeled by $n\in \NN$,  and a sequence $(s_n,y_n)_{n\in \NN}\subseteq \ov{\OO}_T$ such that $(u_{\Dt_n,\Delta x_n} )_{n\in \NN}$ is uniformly bounded,  $u_{\Dt_n,\Delta x_n}-\phi$ has a local maximum at $(s_n,y_n)$, and, as $n\to \infty$,  $(s_n,y_n)\to (\bar{t},\bar{x}) $ and $u_{\Dt_n,\Delta x_n}(s_n,y_n) \to \ov u(\bar{t},\bar{x})$. Moreover, by modifying the test function $\phi$, we can assume that $u_{\Dt_n,\Delta x_n}-\phi$ has a global maximum at $(s_n,y_n)$, i.e. setting  $\xi_{n}:= u_{\Dt_n,\Delta x_n}(s_n,y_n)- \phi(s_n,y_n)$, 
we have 
\be\label{eqeqeqeqae}
(\forall \; (t,x) \in \ov{\OO}_T) \quad  u_{\Dt_n,\Delta x_n}(t,x) \leq   \phi(t,x)  +\xi_{n}, \hspace{0.5cm} {\textrm{with}}\; \xi_n \to 0.
\ee
We  distinguish now   the following cases.

{\bf(i)} \underline{$(\bar{t},\bar{x})  \in [0,T)\times \OO$}. In this case,  for all $n$   large enough, by \eqref{estim_o},  we have $y_n \in \OO_{\Delta x_n}$. Let $k : \mathbb{N} \to \I_{\Delta t_n}^{*}$ be such that $s_n \in [t_{k(n)}, t_{k(n)+1})$.  As $n\to \infty$, we have
 $t_{k(n)} \to \bar t$ and, from \eqref{definition_extension_of_the_discrete_solutions} and  \eqref{eqeqeqeqae}, with $t=t_{k(n)+1}$,  we have
\be\label{inequality_dimostra_1}
(\forall \; x\in \overline{\OO}) \quad I [U_{k(n)+1}](x) \leq   \phi(t_{k(n)+1},x)  +\xi_{n}.
\ee
From Proposition \ref{monotonicity}, we obtain
\be\label{inequality_dimostra_2}
(\forall \; i \in \I_{\Dx})  \quad S_{k_n, i}[U_{k(n)+1}] \leq  S_{k_n, i}[\Phi_{k(n)+1} ]  +\xi_{n},  
\ee
where, for all $k\in \I_{\Dt}$, we have denoted $\Phi_k:= \phi(t_k,\cdot)|_{\mathcal{G}_{\Delta x_n}}$. In particular, by \eqref{eq:def_fullydiscrete}  we get
\be\label{inequality_dimostra_3}
(\forall \; i \in \I_{\Dx}) \quad U_{k(n),i} \leq   S_{k_n, i}[\Phi_{k(n)+1} ]  +\xi_{n}.
\ee
The monotonicity of the interpolation operator \eqref{interpolation} yields  
\be\label{inequality_dimostra_4}
\left(\forall \; x \in \ov{\OO}\right) \quad u_{\Dt_n,\Delta x_n}(s_n,x)\leq   \sum_{i\in\I_{\Dx_n}}\psi_{i}\big(p_{{\Delta x_n}}(x)\big)  S_{k_n,i}[\Phi_{k(n)+1} ] +\xi_{n},
\ee
and hence, by taking $x=y_n$  and using the definition of $\xi_{n}$, we obtain
\be\label{eq:dis_mon_fd}
\phi(s_n,y_n)\leq   \sum_{i\in\I_{\Dx_n}} \psi_{i}(y_n) S_{k_n,i}[\Phi_{k(n)+1} ]. 
\ee
Since $(\bar{t},\bar{x})  \in [0,T)\times \OO$ and $A, B$ are compacts, if $n$ large enough, for all $a\in A, b\in B$ and for all $s\in \I$  we have  $\tilde d^{s}_{k_n,i}(a,b)=0$ for all $i \in \I_{\Dx}$ such that $\psi_{i}(y_n)>0$. 
Using Proposition~\ref{consistency_scheme}\eqref{consistency_nonlinear} and inequality \eqref{eq:dis_mon_fd}, we get 
$$\ba{ll}
\phi(s_n,y_n)&\leq \ds \underset{i\in\I_{\Dx_n}} \sum\psi_{i}(y_n) \left[\phi(t_{k(n)+1},x_i) -\right.\\
& \hspace{1.2cm}\left. \Dt_n
\underset{a \in A }\sup \; \H\left( t_{k(n)},x_i,  D\phi(t_{k(n)+1},x_i), D^2\phi(t_{k(n)+1},x_i) ,a\right)\right] \\[20pt]
& \hspace{0.4cm}+ O\left(\Dt_n\sqrt{\Dt_n}+(\Delta x_n)^2\right).
\ea $$
Then following the same arguments than those in \cite[Theorem 3.1]{CFF10} (see also \cite[Theorem 4.22]{falconeferretilibro}) we conclude that 
\be\label{subsolution_inequality_vera}-\partial_t\phi(\bar t,\bar x)+H(\bar t,\bar x,   D\phi(\bar{t}, \bar{x}), D^2\phi(\bar{t},\bar{x}))\leq 0,
\ee
and, hence, \eqref{subsolution_inside}  holds.

{\bf(ii)} \underline{$(\bar{t},\bar{x})\in  [0,T) \times \partial \OO$}. If 
$$L(\bar{t},\bar{x}, D \phi(\bar{t},\bar{x})) \leq 0 \quad \text{or} \quad -\partial_t\phi(\bar t,\bar x)+H(\bar t,\bar x, D\phi(\bar t,\bar x), D^2\phi(\bar t,\bar x)) \leq  0,$$ 
holds, then \eqref{subsolution_boundary} holds. Thus, let us suppose that
\be\label{aseaweqeq}
L(\bar{t},\bar{x},D \phi(\bar{t},\bar{x}))  > 0 \quad \mbox{and} \quad
-\partial_t\phi(\bar t,\bar x)+H(\bar t,\bar x, D\phi(\bar t,\bar x), D^2\phi(\bar t,\bar x)) > 0.
\ee
Letting $k: \NN \to \{0,\hdots, N_{T}-1\}$ as in {\bf(i)}, we have $t_{k(n)} \to \bar{t}$, \eqref{inequality_dimostra_4} holds true, and hence, 
\be\label{eq:dis_mon_fd_nuova}
\phi(s_n,y_n)\leq   \sum_{i\in \I_{\Dx_n}} \psi_{i}\big( p_{\Dx_n}(y_n)\big) S_{k_n,i}[\Phi_{k(n)+1} ]. 
\ee
On the one hand, from  Proposition~\ref{consistency_scheme}\eqref{consistency_nonlinear}  we get  
$$\ba{cl} 
0 &\leq    \ds \underset{i\in\I_{\Dx_n}} \sum\psi_{i}( p_{\Delta x_n}(y_n)) \bigg( \Dt_n\partial_{t} \phi (t_{k(n)},x_i) \\
&\hspace{0.4cm} -\underset{\substack{a \in A, \\  b \in B}}\sup \bigg\{\Dt_n\H (t_{k(n)},x_i, D\phi(t_{k(n)+1},x_i), D^2\phi(t_{k(n)+1},x_i) ,a)   \\ 
&\hspace{0.4cm}+\frac{1}{2N_{\sigma}}  \ds \sum_{s\in \I} \tilde{d}^{s}_{k,i}(a,b)\left(\tilde{\L}^{s}_{k(n),i}(D \phi(t_{k(n)+1},x_i),a,b) - \sqrt{\Dt}_n K^{s}_{k(n),i}(a,b)  \right)  \bigg\}\bigg) \\
&\hspace{0.4cm} +    O\left(\Dt_n\sqrt{\Dt}_n+(\Delta x_n)^2\right) 
\ea$$
and hence, for all $a\in A$ and $b\in B$,  we have 
\be\label{eq:fix_a_discr}
\ba{cl} 
& \ds \underset{i\in\I_{\Dx_n}}  \sum \psi_{i}\big( p_{\Delta x_n}(y_n)\big) \bigg\{ -\Dt_n\partial_t\phi(t_{k(n)},x_i) \\[8pt]
&\hspace{1cm} + \Dt_n\H (t_{k(n)},x_i, D\phi (t_{k(n)+1},x_i), D^2\phi (t_{k(n)+1},x_i) ,a)     \\[8pt]
&\hspace{1cm} + \frac{1}{2N_{\sigma}}  \ds  \sum_{s\in \I}\tilde{d}^{s}_{k,i}(a,b)\left(\tilde{\L}^{s}_{k(n),i)}(D \phi(t_{k(n)+1},x_i),a,b) -  \sqrt{\Dt}_n K^{s}_{k(n),i}(a,b) \right) \bigg\}   \\[8pt]
 &\hspace{1cm} +    O\left(\Dt_n\sqrt{\Dt}_n+(\Delta x_n)^2\right)\leq 0.
\ea
\ee
On the other hand, since $A$ is compact, there exists $\bar{a}\in A$ such that
$$
 H(\bar t,\bar x, D\phi(\bar t,\bar x), D^2\phi(\bar t,\bar x))=\H (\bar t,\bar x, D\phi(\bar t,\bar x), D^2\phi(\bar t,\bar x),\bar{a}) 
$$
and 

\be\label{213nnn2222}\ba{cl}&\ds \underset{i\in\I_{\Dx_n}} \sum\psi_{i}\big( p_{\Delta x_n}(y_n)\big) \left(-  \partial_t\phi(t_{k(n)},x_i) \right.\\[6pt]
 &  \hspace{2cm} +\left. \H (t_{k(n)},x_i, D\phi(t_{k(n)+1},x_i), D^2\phi(t_{k(n)+1},x_i) ,\bar{a})  \right)\\[6pt]
  & \hspace{2cm} \to -\partial_t\phi(\bar t,\bar x)+H(\bar t,\bar x, D\phi(\bar t,\bar x), D^2\phi(\bar t,\bar x)), \quad  \text{as $n\to \infty$.}
\ea\ee

Let us set $\tilde{d}_n^* =\max \left\{ \tilde{d}^{s}_{k_n,i}(\bar{a}) \; \big| \; s\in\I,  \; {i} \in \I_{\Delta x_n}\right\}$ and take $a=\bar{a}$ and an arbitrary $b\in B$  in \eqref{eq:fix_a_discr}. If there exists a subsequence, still labelled by $n$,  such that $\tilde{d}_{n}^* =0$, then dividing \eqref{eq:fix_a_discr} by $\Dt_n$,  and letting $n\to \infty$, \eqref{213nnn2222} yields 
$$  -\partial_t\phi(\bar t,\bar x)+H(\bar t,\bar x, D\phi(\bar t,\bar x), D^2\phi(\bar t,\bar x))\leq 0,$$
which contradicts \eqref{aseaweqeq}.  Otherwise,  by \eqref{definition_c}, for all $n \in \NN$, large enough,  we have  $\tilde{d}_n^* \geq \bar{c} \sqrt{\Dt_n}$. Notice that the second relation in \eqref{aseaweqeq} and \eqref{213nnn2222} imply that, for $n\in \NN$ large enough,  

\be\label{awqwq1eeeee} 
\ba{cl}
&0 <   \ds \underset{i\in\I_{{\Dx}_n}}\sum \psi_{i}\big( p_{\Delta x_n}(y_n)\big) \left(-  \partial_t\phi(t_{k(n)},x_i)\right. \\[6pt]
  & \hspace{2cm}\left. +     \H (t_{k(n)},x_i, D\phi(t_{k(n)+1},x_i), D^2\phi(t_{k(n)+1},x_i) ,\bar{a})  \right).
\ea
\ee

Therefore, inequality \eqref{eq:fix_a_discr} with $a=\bar{a}$ implies that for all $b\in B$  
\be\label{aeqennnfnfq}
\ba{l} 
    \ds \underset{i\in\I_{{\Dx}_n}} \sum \psi_{i}\big( p_{{\Delta x_n}}(y_n)\big) \bigg\{   \sum_{s\in\I}\tilde{d}^{s}_{k_n,i}(\bar{a},b)\left(\tilde \L^s_{k_n,i} (
   D \phi(t_{k(n)+1},x_i ),\bar{a},b)\right. \\
    \hspace{1cm}-\left.\sqrt{\Dt}_n K^{s}_{k(n),i}(\bar{a},b) \right)\bigg\} +  O\left(\Dt_n\sqrt{\Dt}_n+(\Delta x_n)^2\right)<0.
\ea
\ee
Since the set $\I=\{+,-\} \times \{1, \hdots, d\}$ is finite, there exist  $\hat{s} \in \I$, $\{q^{s} \; | \; s \in   \I \setminus \{\hat{s}\}\} \subseteq [0,1]$, and $i(n)\in \I_{\Delta x_n}$ such that, up to some subsequence,  $\tilde{d}_n^*  =\tilde{d}^{\hat{s}}_{k(n),i(n)}(\bar{a})$ and, for all $s\in \I\setminus \{\hat{s}\}$,  $\tilde{d}^{s}_{k(n),i(n)}( \bar{a})/ \tilde{d}_n^{*}  \to q^{s}$.   Recall that $\tilde{d}_n^*\geq \bar{c} \sqrt{\Dt_n}$ and  $(\Delta x_{n})^{2}/\Dt_n \to 0$ as $n\to\infty$. Dividing \eqref{aeqennnfnfq} by $\tilde{d}_n^*$ and taking the limit $n\to \infty$  yields 
$$
(\forall \; b\in B) \quad \left(\sum_{s\in \I \setminus \{\hat{s}\}} q^s +1\right) \L(\bar{t},\bar{x},D \phi(\bar{t},\bar{x}), b) \leq 0$$
$$ \quad \mbox{and  hence} \quad (\forall \; b\in B)\quad \L(\bar{t},\bar{x},D \phi(\bar{t},\bar{x}), b)\leq 0.
$$
Thus,  $L(\bar{t},\bar{x},D \phi(\bar{t},\bar{x}))\leq 0$, which contradicts \eqref{aseaweqeq}. 

%
%
{\bf(iii)} \underline{$(\bar{t}, \bar{x}) \in \{T\} \times \overline{\OO}$.}
Let us first assume that $(\bar{t}, \bar{x}) \in \{T\} \times {\OO}$. Thus, for $n\in \NN$ large enough, we have $y_n\in\OO$. By taking a subsequence, if necessary, it suffices to consider the cases  $s_n \in [0,T)$, for all $n\in \NN$, and $s_n=T$, for all $n\in \NN$. In the first case, proceeding as in  {\bf(i)}, we get 
\be\label{hamilonian_inequality_final_time}  -\partial_t\phi(\bar t,\bar x)+H(\bar t,\bar x, D\phi(\bar t,\bar x), D^2\phi(\bar t,\bar x)) \leq  0.
\ee
In the second case, \eqref{definition_extension_of_the_discrete_solutions} implies that $u_{\Dt_n,\Delta x_n} (s_n,y_n)=I[\Psi|_{\G_{\Dx}}](y_n)$ and hence letting $n\to \infty$ we get
\be\label{equality_final_condition}\ov u(\bar{t},\bar{x})= \Psi(\bar{x}).
\ee

Now, assume that $(\bar{t}, \bar{x}) \in \{T\} \times \partial \OO$. As before, it suffices to consider the cases  $s_n \in [0,T)$, for all $n\in \NN$, and $s_n=T$ for all $n\in \NN$. If  $ s_n \in  [0,T)$, then, proceeding as in {\bf(ii)}, we get 
\be\label{hamilonian_inequality_final_time_x_in_the_boundary}
L(\bar{t},\bar{x}, D \phi(\bar{t},\bar{x})) \leq 0 \quad \text{or} \quad -\partial_t\phi(\bar t,\bar x)+H(\bar t,\bar x, D\phi(\bar t,\bar x), D^2\phi(\bar t,\bar x)) \leq  0.
\ee
Finally, if $s_n=T$, for all $n\in\NN$,  we have $u_{\Dt_n,\Delta x_n} (s_n,y_n)=I[\Psi|_{\G_{\Dx}}](y_n)$ and hence \eqref{equality_final_condition} holds. 

Altogether, \eqref{hamilonian_inequality_final_time} and \eqref{equality_final_condition} imply that \eqref{subsolution_final_time_interior_space} holds if $(\bar{t}, \bar{x}) \in \{T\} \times {\OO}$, and \eqref{hamilonian_inequality_final_time_x_in_the_boundary} and \eqref{equality_final_condition} imply that \eqref{subsolution_final_time_boundary_space}  holds if $(\bar{t}, \bar{x}) \in \{T\} \times \partial \OO$. 

Thus, from cases {\bf(i)-(iii)} and Remark~\ref{final_time_weak_condition} we obtain that $\ov{u}$ is a subsolution to \eqref{hjb_continuous}.
\end{proof}

\begin{theorem}\label{Th:conv}
Assume {\normalfont \bf{(H1)-(H3)}} and that $(\Delta x_{n})^{2}/\Dt_n \to 0$, as $n \to \infty$.  Then 
$$u_{\Dt_n, \Delta x_n} \to u \quad \text{uniformly in } \ov{\OO}_{T},$$ 
where $u$ is the unique continuous viscosity solution to \eqref{hjb_continuous}.
\end{theorem}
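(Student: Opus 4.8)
The plan is to conclude via the half-relaxed limits method of \cite{BS91}, exactly as in the abstract framework it provides: all the analytical difficulty has already been absorbed into Proposition~\ref{Prop:Viscositysub-super} (consistency, monotonicity and stability), so the proof of the theorem itself is short and essentially formal. First I would note that, directly from the definitions in \eqref{definition_relaxed_limits}, one always has $\uu \leq \uo$ on $\ov{\OO}_T$. By Proposition~\ref{Prop:Viscositysub-super}, $\uo$ is a bounded viscosity subsolution to \eqref{hjb_continuous} and $\uu$ is a bounded viscosity supersolution. I would then apply the comparison principle recalled in Remark~\ref{theoretical_remarks}{\rm(i)} to the pair $(\uo,\uu)$, obtaining the reverse inequality $\uo \leq \uu$ on $\ov{\OO}_T$. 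Combining the two inequalities yields $\uo = \uu =: u$ on $\ov{\OO}_T$.

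Next I would observe that $\uo$ is upper semicontinuous and $\uu$ is lower semicontinuous (as recorded after \eqref{definition_relaxed_limits}, following \cite[Chapter V, Lemma 1.5]{BardiCapuzzo96}); since they coincide, $u$ is continuous on the compact set $\ov{\OO}_T$. Being simultaneously a sub- and a supersolution, $u$ is a viscosity solution to \eqref{hjb_continuous}, and by the uniqueness part of the well-posedness result it must coincide with the unique continuous viscosity solution.

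Finally I would upgrade the equality $\uo = \uu = u$ to uniform convergence $u_{\Dt_n,\Delta x_n} \to u$ on $\ov{\OO}_T$; this is the standard consequence of the coincidence of the two half-relaxed limits with a continuous function on a compact set (see \cite[Chapter V]{BardiCapuzzo96}). Arguing by contradiction, if the convergence failed there would exist $\e>0$ and a sequence $(s_n,y_n)\in\ov{\OO}_T$ with $|u_{\Dt_n,\Delta x_n}(s_n,y_n)-u(s_n,y_n)|\geq \e$; extracting, by compactness of $\ov{\OO}_T$, a subsequence with $(s_n,y_n)\to (t^*,x^*)$ and passing to the $\limsup$ and $\liminf$ would force $\uo(t^*,x^*)\geq u(t^*,x^*)+\e$ or $\uu(t^*,x^*)\leq u(t^*,x^*)-\e$, each of which contradicts $\uo=\uu=u$ together with the continuity of $u$ at $(t^*,x^*)$.

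I expect no genuine obstacle here, in contrast with the preceding sections: the content is entirely contained in the properties already established. The only point requiring a little care is that the comparison principle is applied to the half-relaxed limits $\uo$ and $\uu$, which are merely semicontinuous rather than continuous; this is precisely the generality in which the comparison result of \cite{Barles1993} (recalled in Remark~\ref{theoretical_remarks}{\rm(i)}) is stated, so the application is legitimate.
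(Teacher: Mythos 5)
Your proposal is correct and follows essentially the same route as the paper's proof: the ordering $\uu \leq \uo$ from the definitions, Proposition~\ref{Prop:Viscositysub-super} plus the comparison principle of Remark~\ref{theoretical_remarks}{\rm(i)} to get the reverse inequality, and then the standard fact that coincidence of the half-relaxed limits yields uniform convergence (the paper simply cites \cite[Chapter V, Lemma 1.9]{BardiCapuzzo96} where you spell out the compactness/contradiction argument). Your closing remark that the comparison principle is stated for merely semicontinuous bounded sub- and supersolutions, so its application to $\uo$ and $\uu$ is legitimate, is exactly the point that makes the paper's one-line argument valid.
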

\begin{proof} By \eqref{definition_relaxed_limits} we have $\uu \leq \uo$ in $\ov{\OO}_{T}$ and, by Proposition~\ref{Prop:Viscositysub-super} and the comparison principle for sub- and super solutions to  \eqref{hjb_continuous} (see Remark~\ref{theoretical_remarks}{\rm(i)}), we obtain that  $\uu \geq \uo$ in $\ov{\OO}_{T}$. Thus, $u=\uu = \uo$ and the result follows from \cite[Chapter V, Lemma 1.9]{BardiCapuzzo96}.
\end{proof}
\section{Numerical results}\label{numerical_results}

In this section, we present some numerical experiments in order to show the performance of the scheme.
We consider first a one-dimensional linear parabolic equation, with homogeneous Neumann boundary conditions, and  both the first and second order cases. In the former, the boundary conditions are not satisfied in the pointwise sense at every point in the boundary,  but they hold in the viscosity sense (see Definition~\ref{def:visc_sol}). The second example deals with a degenerate second order nonlinear equation on a smooth two-dimensional domain. We consider both non-homogeneous Neumann and oblique boundary conditions. In the last example, we approximate the solution to a non-degenerate second order nonlinear equation with mixed Dirichlet and homogeneous Neumann boundary conditions on a non-smooth domain. Because of the presence of Dirichlet boundary conditions and corners, the scheme has to be modified and the convergence result in Sect.~\ref{properties_fully_discrete_scheme} does not apply. However, the scheme can be successfully applied to solve numerically the problem.

The problems in the first two tests  have known analytic solutions. This will allow to compute the errors of solutions to the scheme and to perform a numerical convergence analysis. In the examples dealing with two-dimensional domains, we have considered unstructured triangular meshes, constructed  with the Matlab2019 function \verb+initmesh+.

%

In the simulations we have chosen  time and space steps satisfying $\Delta t= \Delta x$ or $\Delta t= \Delta x /2$, which are in agreement with the assumption in Theorem \ref{Th:conv}.

\subsection{One-dimensional linear problem}\label{test1} Let $\eps> 0$, set $\lambda^{\pm}_{\varepsilon} =  (1 \pm \sqrt{1+4\varepsilon})/2\eps$, and define 
$$
\ba{l} 
\; f(t,x) = \ds  \frac{3-t}{2} \left(1 + \frac{ e^{\lambda^+_{\varepsilon} x} \left(e^{\lambda^-_{\varepsilon}} - 1\right)}{e^{\lambda^+_{\varepsilon}} -  e^{\lambda^-_{\varepsilon}}} \left(1 -\varepsilon \lambda^+_{\varepsilon} \right) + \frac{ e^{\lambda^-_{\varepsilon}x} \left(1 -  e^{\lambda^+_{\varepsilon}}\right)}{e^{\lambda^+_{\varepsilon}} -  e^{\lambda^-_{\varepsilon}}} \left(1 - \varepsilon \lambda^-_{\varepsilon} \right) \right) \\[14pt]
\hspace{3cm} \ds +\frac{1}{2}\left(   x + \frac{e^{\lambda^+_{\varepsilon} x} \left(e^{\lambda^-_{\varepsilon}} - 1\right)}{e^{\lambda^+_{\varepsilon}} -  e^{\lambda^-_{\varepsilon}}} + \frac{ e^{\lambda^-_{\varepsilon}x} \left(1 -  e^{\lambda^+_{\varepsilon}}\right)}{e^{\lambda^+_{\varepsilon}} -  e^{\lambda^-_{\varepsilon}}} \right), \\[14pt]\ds
u_\varepsilon(t,x) =   \frac{3-t}{2}\left( x + \frac{ e^{\lambda^-_{\varepsilon}} - 1}{\lambda^+_{\varepsilon}\left( e^{\lambda^+_{\varepsilon}} -  e^{\lambda^-_{\varepsilon}} \right)} e^{\lambda^+_{\varepsilon} x} + \frac{1 -  e^{\lambda^+_{\varepsilon}}}{\lambda^-_{\varepsilon}\left( e^{\lambda^+_{\varepsilon}} -  e^{\lambda^-_{\varepsilon}} \right)} e^{\lambda^-_{\varepsilon} x}\right),
\ea
$$
for $(t,x)\in [0,1]^2$. Then $u_\eps$ is the unique classical solution to 
\be\label{eq:test1}
\ba{l}
\, -\partial_{t} u -\eps \partial^{2}_{x} u  + \partial_{x} u     =  f  \quad  \mbox{in }  [0,1) \times (0,1),\\[4pt]
  \partial_{x}u(\cdot,0)=\partial_x u(\cdot,1) =  0 \quad \mbox{in $[0,1)$},\\[4pt]
\hspace{2.1cm}u(1,\cdot)= u_{\eps}(1,\cdot) \quad \mbox{in $[0,1]$}.
\ea
\ee


Similarly to  \cite[Example 7.3]{CrandallIshiiLions92}, we have 
$$
u_\varepsilon (t,x) \underset{\eps\to 0}{\longrightarrow} u_0(t,x):=  \frac{3-t}{2} \left( x + e^{-x} \right), \quad {\mbox{uniformly on }}[0,1]^2
$$
and $u_0$ is the unique viscosity solution to 
\be\label{eq:test1_null_eps}
\ba{l}
\hspace{1.22cm} -\partial_{t} u   + \partial_{x} u     =  f  \quad  \mbox{in }  [0,1) \times (0,1),\\[4pt]
 \partial_{x}u(\cdot,0)=\partial_x u(\cdot,1) =  0 \quad \mbox{in $[0,1)$},\\[4pt]
\hspace{2.1cm}u(1,\cdot)= u_{0}(1,\cdot) \quad \mbox{in $[0,1]$}.
\ea
\ee

Notice that for $t\in [0,1]$ we have $-\partial_{t} u(t,1)   + \partial_{x} u(t,1)-f(t,1)\leq 0$ and $\partial_x u(t,1)>0$. Thus, at $(t,1)$ the boundary condition is satisfied in the viscosity sense but not in the pointwise sense. 


Using \eqref{eq:def_fullydiscrete}, we approximate $u_\eps$ for $\varepsilon =0.05$, $\varepsilon =0.03$,  and $\varepsilon =0$. 
For these choices, we plot in Figure \ref{fig:test1_nuovo} respectively the  approximations of $u_{\eps}(1,\cdot)$  and $u_{\eps}(0,\cdot)$, computed with the steps sizes $\Delta x = 3.125 \cdot 10^{-3}$ and $\Dt = \Delta x/2$.

We show  in Tables 1 and 2 the errors 
$$ E_\infty=\ds \underset {i\in \I_{\Dx}}\max |U_{0,i}-u(0,x_i)|,\quad 
         E_{1}=\Delta x\ds \sum_{i\in \I_{\Dx}} |U_{0,i}-u(0,x_i)|,\quad 
$$
and the corresponding convergence rates $p_{\infty}$ and $p_1$, for  $\varepsilon = 0.05$ and  $\varepsilon = 0$, respectively. In all cases, an order of convergence close to $1$ is obtained.
%
%

In the simulations, we have chosen $\bar{c}:=0.025+ \sigma/2$, where $\sigma = \sqrt{2\varepsilon}$ is the diffusion parameter.  With this choice, the larger the value of $\sigma$, the more the characteristics are reflected further into $\OO$.
\begin{table}[h!]
  \centering
  \caption{Errors and convergence rates for  problem \eqref{eq:test1} with $\varepsilon = 0.05$.}
\label{table:eps_nuovo}
\begin{tabular}{lllllllll} 
\hline\noalign{\smallskip}
& \multicolumn{4}{c|}{$\Dt = \Delta x$} & \multicolumn{4}{c}{$\Dt = \Delta x/2$} \\
\noalign{\smallskip}\hline\noalign{\smallskip}
$\Delta x$& $E_{\infty}$ &  $E_1$ & $p_{\infty}$ &  $p_1$ & $E_{\infty}$ &  $E_1$ & $p_{\infty}$ &  $p_1$    \\
\hline
$5.00\cdot 10^{-2}$ &  $3.99\cdot 10^{-2}$ & $2.57\cdot 10^{-2}$  & -  &  - & $2.16\cdot 10^{-2}$ & $2.03\cdot 10^{-2}$  & -  &  -  \\
\hline 
$2.50\cdot 10^{-2}$ &  $2.25\cdot 10^{-2}$ & $1.06\cdot 10^{-2}$  & 0.83& 1.28  & $1.26\cdot 10^{-2}$ & $6.22\cdot 10^{-3}$  & 0.78 &  1.71 \\
\hline
$1.25\cdot 10^{-2}$ & $1.17\cdot 10^{-2}$ & $6.13\cdot 10^{-3}$  & 0.94  &  0.79 & $5.87\cdot 10^{-3}$ & $5.64\cdot 10^{-3}$  & 1.10&  0.14  \\
\hline 
$6.25\cdot 10^{-3}$ &  $5.38\cdot 10^{-3}$ & $2.49\cdot 10^{-3}$  & 1.12 & 1.30 & $3.17\cdot 10^{-3}$ & $2.95\cdot 10^{-3}$  & 0.89 &  0.93 \\
\hline 
$3.125 \cdot 10^{-3}$ &  $2.15\cdot 10^{-3}$ & $1.77\cdot 10^{-3}$  & 1.32  & 0.49 & $1.62\cdot 10^{-3}$ & $1.50\cdot 10^{-3}$  & 0.97 & 0.98\\
\hline 
 \end{tabular}
\end{table}
\begin{table}[h!]
\caption{Errors and convergence rates for  problem \eqref{eq:test1} with $\varepsilon = 0$.}
\label{table:eps0_nuovo}
  \centering
\begin{tabular}{lllllllll} 
\hline\noalign{\smallskip}
& \multicolumn{4}{c|}{$\Dt = \Delta x$} & \multicolumn{4}{c}{$\Dt = \Delta x/2$} \\
\noalign{\smallskip}\hline\noalign{\smallskip}
$\Delta x$ & $E_{\infty}$ &  $E_1$ & $p_{\infty}$ &  $p_1$ & $E_{\infty}$ &  $E_1$ & $p_{\infty}$ &  $p_1$    \\
\hline
$5.00\cdot 10^{-2}$ &  $2.83\cdot 10^{-2}$ & $1.95\cdot 10^{-2}$  & -  &  - & $2.26\cdot 10^{-2}$ & $1.86\cdot 10^{-2}$  & -  &  -  \\
\hline 
$2.50\cdot 10^{-2}$ &  $1.42\cdot 10^{-2}$ & $1.01\cdot 10^{-2}$  & 0.99 & 0.95  & $1.15\cdot 10^{-2}$ & $9.97\cdot 10^{-3}$  & 0.97 &  0.90 \\
\hline
$1.25\cdot 10^{-2}$ & $7.08\cdot 10^{-3}$ & $5.39\cdot 10^{-3}$  & 1.00  &  0.91 & $5.88\cdot 10^{-3}$ & $5.42\cdot 10^{-3}$  & 0.97 &  0.88  \\
\hline 
$6.25\cdot 10^{-3}$ &  $3.54\cdot 10^{-3}$ & $2.91\cdot 10^{-3}$  & 1.00  & 0.89 & $3.04\cdot 10^{-3}$ & $2.97\cdot 10^{-3}$  & 0.95 &  0.87  \\
\hline 
$3.125 \cdot 10^{-3}$ &  $1.77\cdot 10^{-3}$ & $1.59\cdot 10^{-3}$  & 1.00  & 0.87 & $1.68\cdot 10^{-3}$ & $1.63\cdot 10^{-3}$  & 0.86 &  0.87  \\
\hline 
 \end{tabular}

\end{table}
\begin{figure}
\centering
       \includegraphics[width=0.4\textwidth]{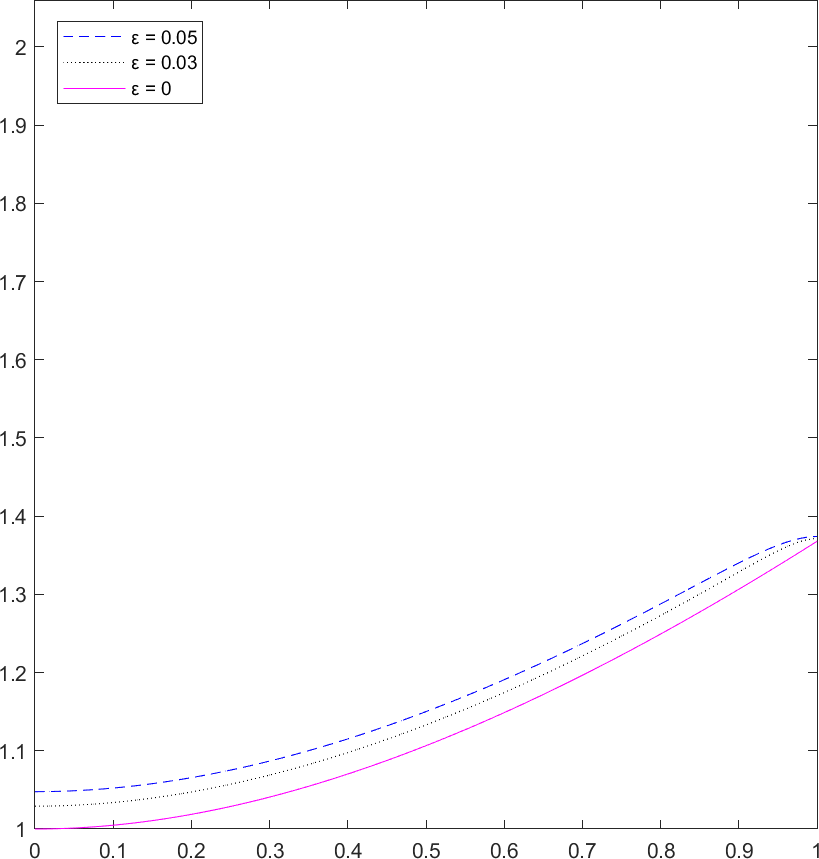}
         \centering
         \includegraphics[width=0.4\textwidth]{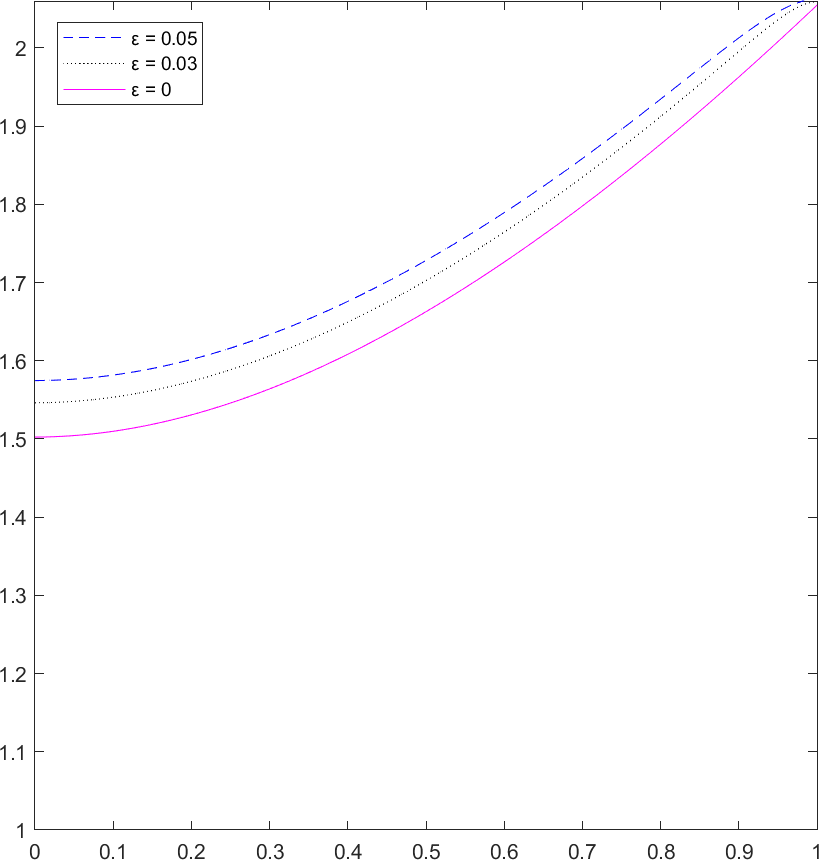}
  \caption{Exact final condition $u_{\eps}(1,\cdot)$ (left) and numerical approximations   of $u_{\eps}(0,\cdot)$ (right)  for $\varepsilon = 0.05$, $\eps=0.03$, and $\eps=0$, with step sizes $\Delta x = 6.25\times 10^{-3}$ and $\Dt=\Delta x /2$.\label{fig:test1_nuovo}}
\end{figure}
\subsection{Nonlinear problem on a circular domain}\label{test2}
Let $T=1$, $\OO=\{ x = (x_1,x_2)\in \RR^2 \, |  \; |x | < 1 \}$,   $\sigma(t,x) = \sqrt{2} (\sin(x_1 + x_2), \cos(x_1 + x_2))$, and
$$\ba{rcl}f(t,x) &=&   \left(\frac{1}{2} - t\right)\sin (x_1) \sin(x_2) + \left( \frac{3}{2} - t \right) \bigg( \sqrt{\cos^2(x_1) \sin^2(x_2) + \sin^2(x_1) \cos^2(x_2)} \\[6pt]
\; & \; &  - 2 \sin(x_1 + x_2) \cos(x_1 + x_2) \cos(x_1) \cos(x_2)\bigg),\\[6pt]
g(t,x) &=&  \left(\frac{3}{2} - t \right) \left( x_1\cos(x_1)\sin(x_2)+ x_2 \sin(x_1)\cos(x_2)\right).
\ea$$ 
Then $\ov{\OO}_{T} \ni (t,x_1,x_2)\mapsto \bar{u}(t,x_1,x_2) = \left(\frac{3}{2} - t \right) \sin(x_1)\sin(x_2)$ is the unique classical solution to 
\be\label{eq:test_2}
\ba{rcl}
\partial_t u - \half \text{Tr}(\sigma\sigma^{\top}D^2u)+ |Du| &=& f \quad \text{in } \OO_{T}, \\[6pt]
   \langle n, Du\rangle &=& g  \quad \text{in } [0,T)\times\partial \OO, \\[6pt]
   u(0,x) & =& \bar{u}(0,x) \quad \text{in } x\in \ov{\OO}.
\ea
\ee

In Figure~\ref{fig:test2}, we show the numerical solution at the final time $T=1$ computed on an unstructured triangular mesh  $\G_{\Delta x}$ with mesh size $\Delta x=1.25 \cdot 10^{-1}$. On the left, we plot the result together with the contour lines. On the right, we plot the approximation together with  the mesh used to compute it. 
\begin{figure}[h!]
\centering
        \includegraphics[width=0.5\textwidth]{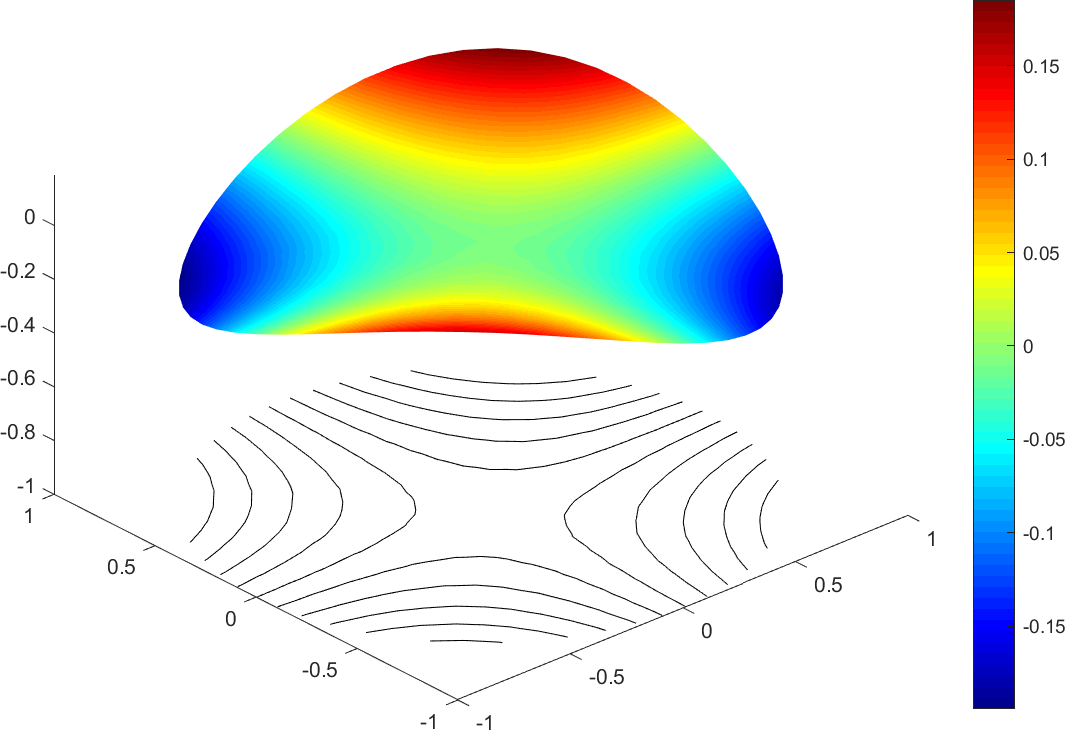}
         \includegraphics[width=0.4\textwidth]{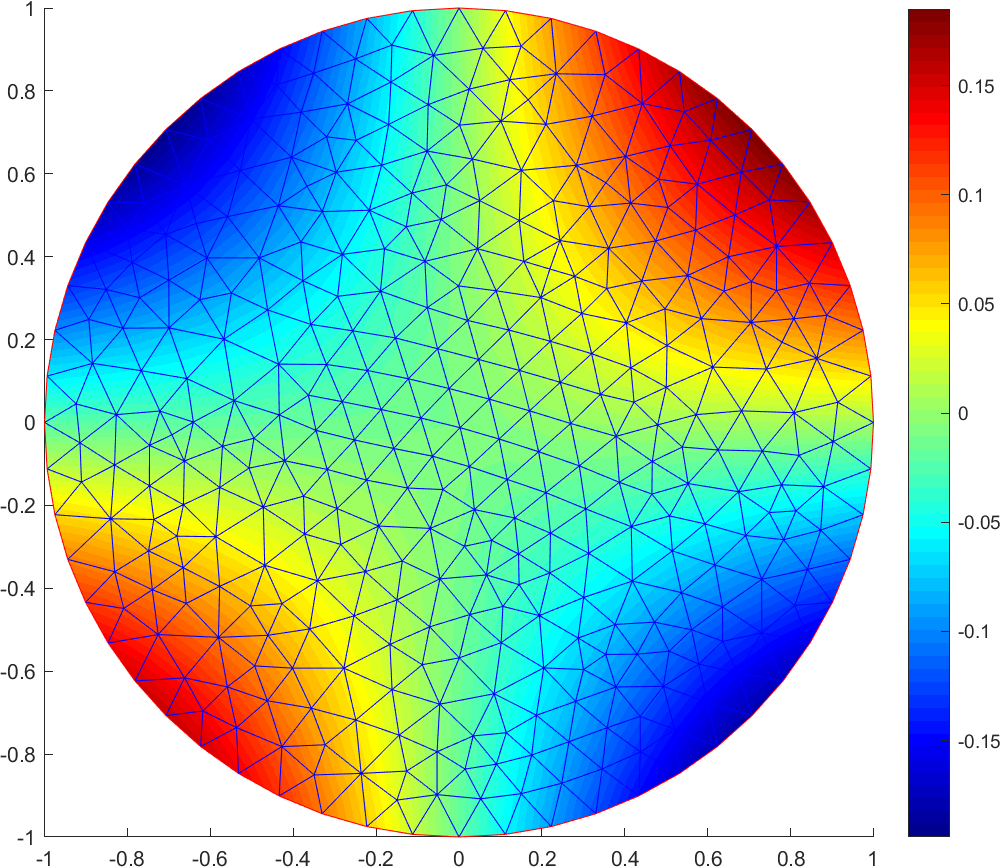}
	\caption{Numerical solution at time $T=1$ of problem in subsect.\ref{test2} with  Neumann boundary condition, computed  with $\Delta x = 0.125$ and $\Dt = \Delta x / 2$.}\label{fig:test2}
\end{figure}\\

Given an element $\hat{T}$ of the triangulation, we denote by  $x_{\hat T}$  its barycenter and by $|\hat{T}|$ its area. We show  in Tables 3 and 4 the errors  
\be\label{test:errors_second_example} E_\infty=\underset {i\in \I_{\Dx}}\max |U_{N_T,i}-\bar{u}(t_{N_T},x_i)|,\quad 
         E_{1}=\sum_{\hat T \in \T_{\Delta x}} |\hat T| \big|I[U_{N_T,(\cdot)}](x_{\hat T})-\bar{u}(t_{N_T},x_{\hat T})\big|,
\ee
and the corresponding convergence rates $p_{\infty}$ and $p_1$. In each table, we specify in the first column the mesh size $\Delta x$. To obtain the results shown in Tables 3 and 4, we have chosen  $\bar{c}$ in \eqref{definizione_R_n} and \eqref{definition_c} as $\bar c=0.25$ and $\bar c = 0.5$, respectively. For both choices of $\bar c$, we observe similar errors and an analogue behavior of the convergence rates. As in the previous example, an order of convergence close to $1$ is obtained.
 
\begin{table}[h!]
\caption{Errors and convergence rates for the approximation of \eqref{eq:test_2} with $\bar c=0.25$.
\label{table:testcontrols}}
  \centering
\begin{tabular}{lllllllll} 
\hline\noalign{\smallskip}
& \multicolumn{4}{c|}{$\Dt = \Delta x$} & \multicolumn{4}{c}{$\Dt = \Delta x/2$} \\
\noalign{\smallskip}\hline\noalign{\smallskip}
 $\Delta x$& $E_{\infty}$ &  $E_1$ & $p_{\infty}$ &  $p_1$ & $E_{\infty}$ &  $E_1$ & $p_{\infty}$ &  $p_1$    \\
\hline
$2.50 \cdot 10^{-1}$ &  $2.73\cdot 10^{-1}$ & $2.95\cdot 10^{-1}$  & -  &  - & $1.22\cdot 10^{-1}$ & $1.07\cdot 10^{-1}$  & -  &  -  \\
\hline 
$1.25 \cdot 10^{-1}$ &  $1.24\cdot 10^{-1}$ & $1.12\cdot 10^{-1}$  & 1.14 & 1.40  & $5.54\cdot 10^{-2}$ & $4.57\cdot 10^{-2}$  & 1.14 &  1.24 \\
\hline
$6.25 \cdot 10^{-2}$ & $5.55\cdot 10^{-2}$ & $4.72\cdot 10^{-2}$  & 1.16  &  1.24 & $2.39\cdot 10^{-2}$ & $2.11\cdot 10^{-2}$  & 1.21 &  1.11  \\
\hline 
$3.125 \cdot 10^{-2}$ &  $2.49\cdot 10^{-2}$ & $2.16\cdot 10^{-2}$  & 1.16  & 1.13 & $1.22\cdot 10^{-2}$ & $1.10\cdot 10^{-2}$  & 0.97 &  0.94  \\
\hline 
 \end{tabular}
\end{table}
\begin{table}[h!]
  \centering
  \caption{Errors and convergence rates for the approximation of \eqref{eq:test_2} with $\bar c=0.5$.
\label{table:testcontrols_2}}
\begin{tabular}{lllllllll} 
\hline\noalign{\smallskip}
& \multicolumn{4}{c|}{$\Dt = \Delta x$} & \multicolumn{4}{c}{$\Dt = \Delta x/2$} \\
\noalign{\smallskip}\hline\noalign{\smallskip}

$\Delta x$& $E_{\infty}$ &  $E_1$ & $p_{\infty}$ &  $p_1$ & $E_{\infty}$ &  $E_1$ & $p_{\infty}$ &  $p_1$    \\
\hline
$2.50 \cdot 10^{-1}$ &  $2.65\cdot 10^{-1}$ & $2.55\cdot 10^{-1}$  & -  &  - & $1.18\cdot 10^{-1}$ & $1.02\cdot 10^{-1}$  & -  &  -  \\
\hline 
$1.25 \cdot 10^{-1}$ &  $1.23\cdot 10^{-1}$ & $1.12\cdot 10^{-1}$  & 1.11 & 1.19  & $5.60\cdot 10^{-2}$ & $4.72\cdot 10^{-2}$  & 1.08 &  1.11 \\
\hline
$6.25 \cdot 10^{-2}$ & $5.74\cdot 10^{-2}$ & $5.06\cdot 10^{-2}$  & 1.10  &  1.15 & $2.64\cdot 10^{-2}$ & $2.27\cdot 10^{-2}$  & 1.08 &  1.06  \\
\hline 
$3.125 \cdot 10^{-2}$ &  $2.70\cdot 10^{-2}$ & $2.39\cdot 10^{-2}$  & 1.09  & 1.08 & $1.22\cdot 10^{-2}$ & $1.10\cdot 10^{-2}$  & 1.11 &  1.05 \\
\hline 
 \end{tabular}

\end{table}

Next, we consider the same problem but with  oblique boundary conditions.
More precisely, for  $x = (x_1,x_2) \in \partial \OO$  we set
$$
\gamma(x) = \left(x_1\cos(\pi/6) + x_2 \sin(\pi/6),x_2\cos(\pi/6) - x_1 \sin(\pi/6)\right)$$ and 
$$
\ba{rcl}
\tilde{g}(t,x)&=& \left(\frac{3}{2} - t \right) \big[\left(x_1\cos(\pi/6) + x_2 \sin(\pi/6)\right)\cos(x_1)\sin(x_2) \\[6pt]
\; & \; & + \left(x_2\cos(\pi/6) - x_1 \sin(\pi/6)\right) \sin(x_1)\cos(x_2)\big] \quad \text{in } [0,T)\times \partial \OO.
\ea
$$
Then $\bar{u}$ is the unique classical solution to 
\be\label{eq:test_2_bis}
\ba{rcl}
\partial_t u - \half \text{Tr}(\sigma\sigma^{\top}D^2u)+ |Du| &=& f \quad \text{in } \OO_{T}, \\[6pt]
   \langle \gamma, Du\rangle &=& \tilde{g}  \quad \text{in } [0,T)\times\partial \OO, \\[6pt]
   u(0,x) & =& \bar{u}(0,x) \quad \text{in } x\in \ov{\OO}.
\ea
\ee
The   solution  $\bar{u}$  is approximated by using the same unstructured meshes as in the previous case.   We show in Tables \ref{table:testcontrolsgammac025} and \ref{table:testcontrolsgammac05} the errors \eqref{test:errors_second_example}  computed with  $\bar c=0.25$ and $\bar c = 0.5$,  respectively. As in the previous case,  we observe similar errors and an analogue behavior of the convergence rates for both choices of $\bar c$.   We also observe a slight degradation of the errors and the convergence rates in the more complicated case of oblique boundary conditions. 
\begin{table}[h!]
  \centering
  \caption{Errors and convergence rates for the approximation of \eqref{eq:test_2_bis} with $\bar c=0.25$
\label{table:testcontrolsgammac025}}

\begin{tabular}{lllllllll} 
\hline\noalign{\smallskip}
& \multicolumn{4}{c|}{$\Dt = \Delta x$} & \multicolumn{4}{c}{$\Dt = \Delta x/2$} \\
\noalign{\smallskip}\hline\noalign{\smallskip}
$\Delta x$ & $E_{\infty}$ &  $E_1$ & $p_{\infty}$ &  $p_1$ & $E_{\infty}$ &  $E_1$ & $p_{\infty}$ &  $p_1$    \\
\hline
$2.50 \cdot 10^{-1}$ &  $3.06\cdot 10^{-1}$ & $4.38\cdot 10^{-1}$  & -  &  - & $1.50\cdot 10^{-1}$ & $2.08\cdot 10^{-1}$  & -  &  -  \\
\hline 
$1.25 \cdot 10^{-1}$ &  $1.56\cdot 10^{-1}$ & $2.25\cdot 10^{-1}$  & 0.97& 0.96  & $7.96\cdot 10^{-2}$ & $1.17\cdot 10^{-1}$  & 0.91 &  0.83 \\
\hline
$6.25 \cdot 10^{-2}$ & $8.10\cdot 10^{-2}$ & $1.21\cdot 10^{-1}$  & 0.95  &  0.89 & $4.36\cdot 10^{-2}$ & $6.84\cdot 10^{-2}$  & 0.88 &  0.77 \\
\hline 
$3.125 \cdot 10^{-2}$ &  $4.47\cdot 10^{-2}$ & $7.17\cdot 10^{-2}$  & 0.86  & 0.75 & $2.58\cdot 10^{-2}$ & $4.26\cdot 10^{-2}$  & 0.76 &  0.68  \\
\hline 
 \end{tabular}
\end{table}

\begin{table}[h!]
  \centering
  \caption{Errors and convergence rates for the approximation of \eqref{eq:test_2_bis} with $\bar c=0.5$.}
\label{table:testcontrolsgammac05}
\begin{tabular}{lllllllll} 
\hline\noalign{\smallskip}
& \multicolumn{4}{c|}{$\Dt = \Delta x$} & \multicolumn{4}{c}{$\Dt = \Delta x/2$} \\
\noalign{\smallskip}\hline\noalign{\smallskip}
$\Delta x$  & $E_{\infty}$ &  $E_1$ & $p_{\infty}$ &  $p_1$ & $E_{\infty}$ &  $E_1$ & $p_{\infty}$ &  $p_1$    \\
\hline
$2.50 \cdot 10^{-1}$ &  $2.94\cdot 10^{-1}$ & $3.81\cdot 10^{-1}$  & -  &  - & $1.42\cdot 10^{-1}$ & $1.69\cdot 10^{-1}$  & -  &  -  \\
\hline 
$1.25 \cdot 10^{-1}$ &  $1.49\cdot 10^{-1}$ & $1.88\cdot 10^{-1}$  & 0.98& 1.02  & $7.22\cdot 10^{-2}$ & $8.56\cdot 10^{-2}$  & 0.98 &  0.98 \\
\hline
$6.25 \cdot 10^{-2}$ & $7.55\cdot 10^{-2}$ & $9.33\cdot 10^{-2}$  & 0.98  &  1.01 & $3.79\cdot 10^{-2}$ & $4.63\cdot 10^{-2}$  & 0.93 &  0.89 \\
\hline 
$3.125 \cdot 10^{-2}$ &  $3.95\cdot 10^{-2}$ & $5.02\cdot 10^{-2}$  & 0.93  & 0.89& $2.12\cdot 10^{-2}$ & $2.75\cdot 10^{-2}$  & 0.84 &  0.75  \\
\noalign{\smallskip}\hline 
\end{tabular}
\end{table}

\subsection{Nonlinear problem on a non-smooth domain with mixed Dirichlet-Neumann boundary conditions}\label{test3}
In this last example, we deal with a problem of exiting from a bounded rectangular domain with an circular obstacle inside of it. We model this problem by considering  a modification of \eqref{HJB_principal_equation} including mixed Dirichlet-Neumann boundary conditions, with a large time horizon $T$ in order to reach a stationary solution. We consider the space domain  
$$\OO = \bigg( (-1,1) \times (-0.5,0.5)\bigg)\setminus \{ x \in \RR^2 \, | \,  | x - (-0.5, 0)|\leq 0.2 \},$$
a control set $A = \{a \in \RR^2 \, | \,  | a | = 1 \}$, a drift $\mu(t,x,a) = a$, a diffusion coefficient $\sigma(t,x,a) = 0.1 I_{2}$, where $I_2$ is the identity matrix of size $2$, 
a running cost  $f\equiv 1$, and an initial condition  $\Psi\equiv 0$. We impose   constant Dirichlet boundary conditions on some parts of $\partial \OO$, representing the exits of the domain,  in order to model some exit costs. More precisely,   Dirichlet boundary conditions (or exit costs) $u=0$ and $u=0.2$ are imposed on  $\partial \OO_1 = \{ x=(x_1,x_2) \in \partial \OO \, | \,  x_1 = -1, | x_2 | \leq 0.2 \}$ and $\partial \OO_2 = \{ x=(x_1,x_2) \in \partial \OO \, | \,  x_1 = 1, | x_2 | \leq 0.2 \}$, respectively.  We also consider homogeneous Neumann boundary conditions on the remaining part of the boundary.

We treat the Dirichlet boundary conditions by using an extrapolation technique. This approximation has been proposed in \cite{BCCF2020} and has been shown to be more accurate with respect to the methods  proposed in \cite{milstein:2001,bonaventura:2018}.
 We show in Figure \ref{fig:test3} the numerical approximation computed on an unstructured mesh  with mesh size $\Dx = 0.01$, a time step $\Dt = \Dx$ and final time $T = 3$. Figure \ref{fig:test3_2} diplays the quiver plot of $-Du$ at time $T=3$. 
\begin{figure}[h!]
        \includegraphics[width=0.5\textwidth]{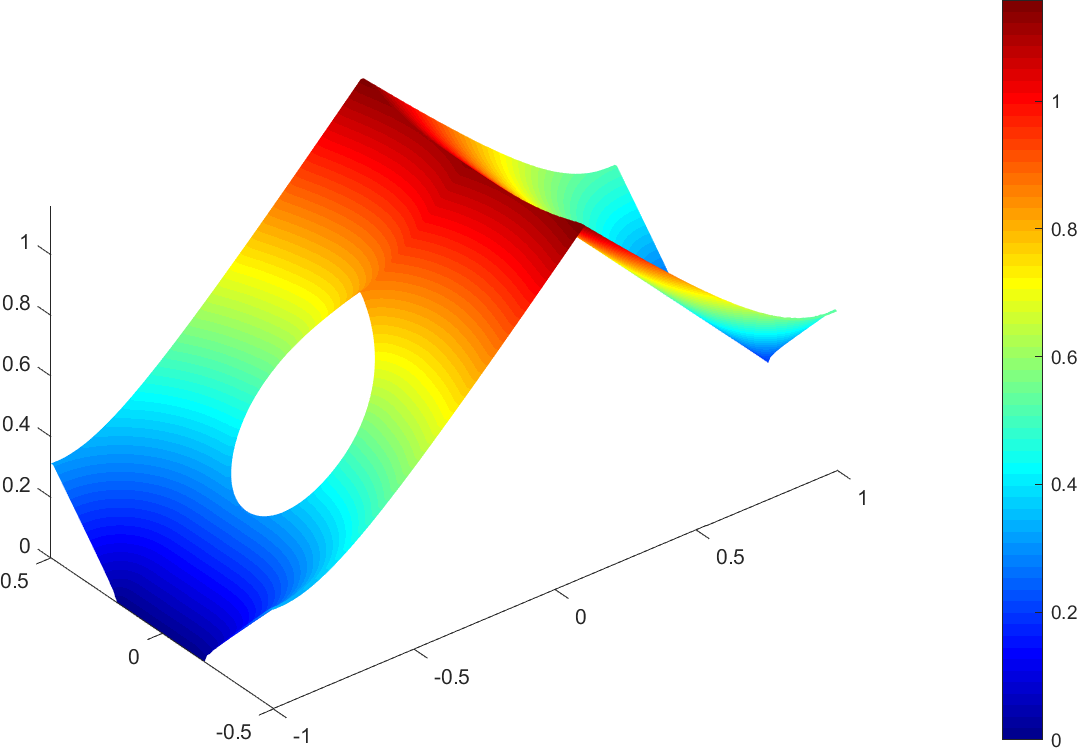}
         \includegraphics[width=0.5\textwidth]{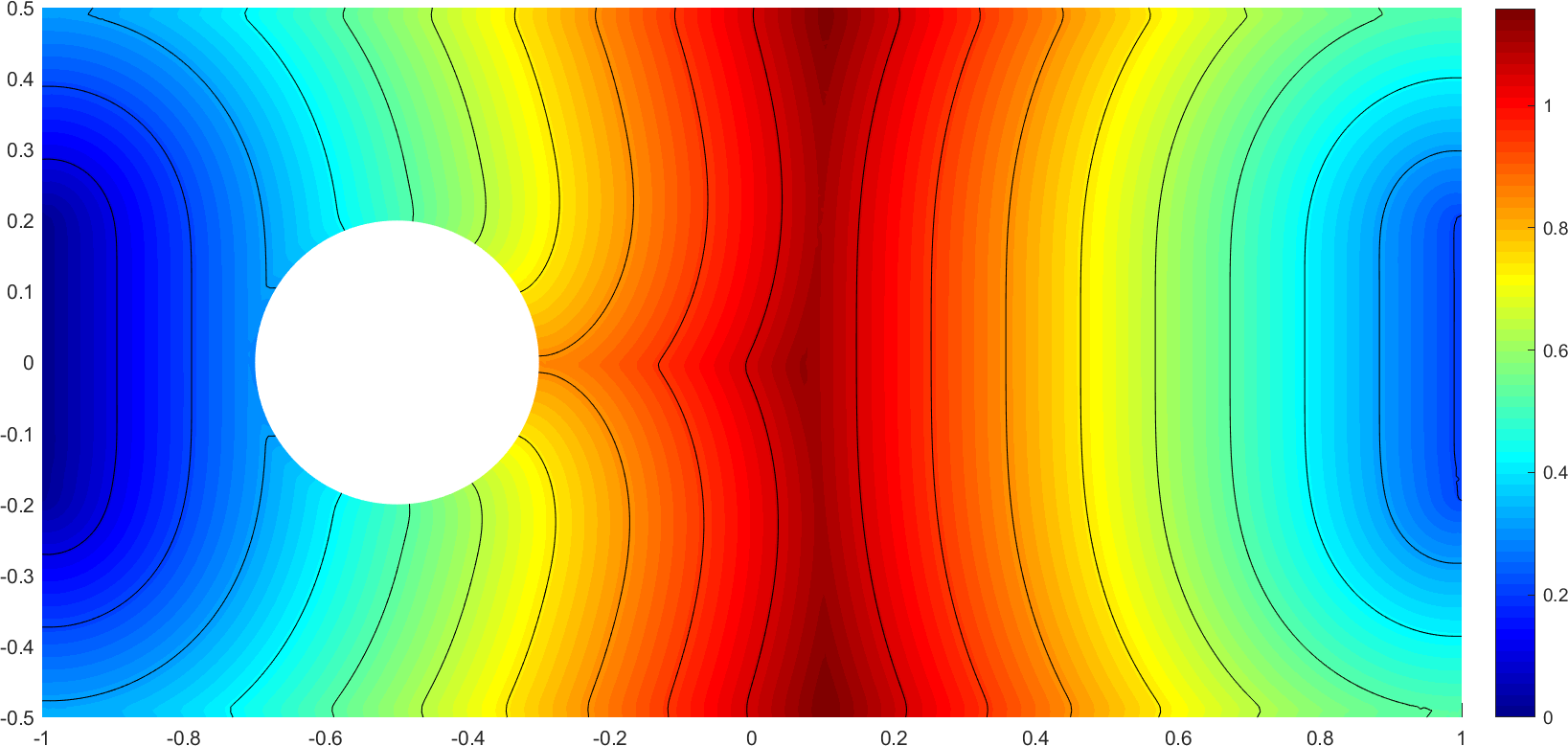}
 	\caption{Solution at time $T=3$ with $\Delta x = 0.01$ and with $\Dt = \Dx$.}\label{fig:test3}
\end{figure}

\begin{figure}[h!]
\centering
  \includegraphics[width=\textwidth]{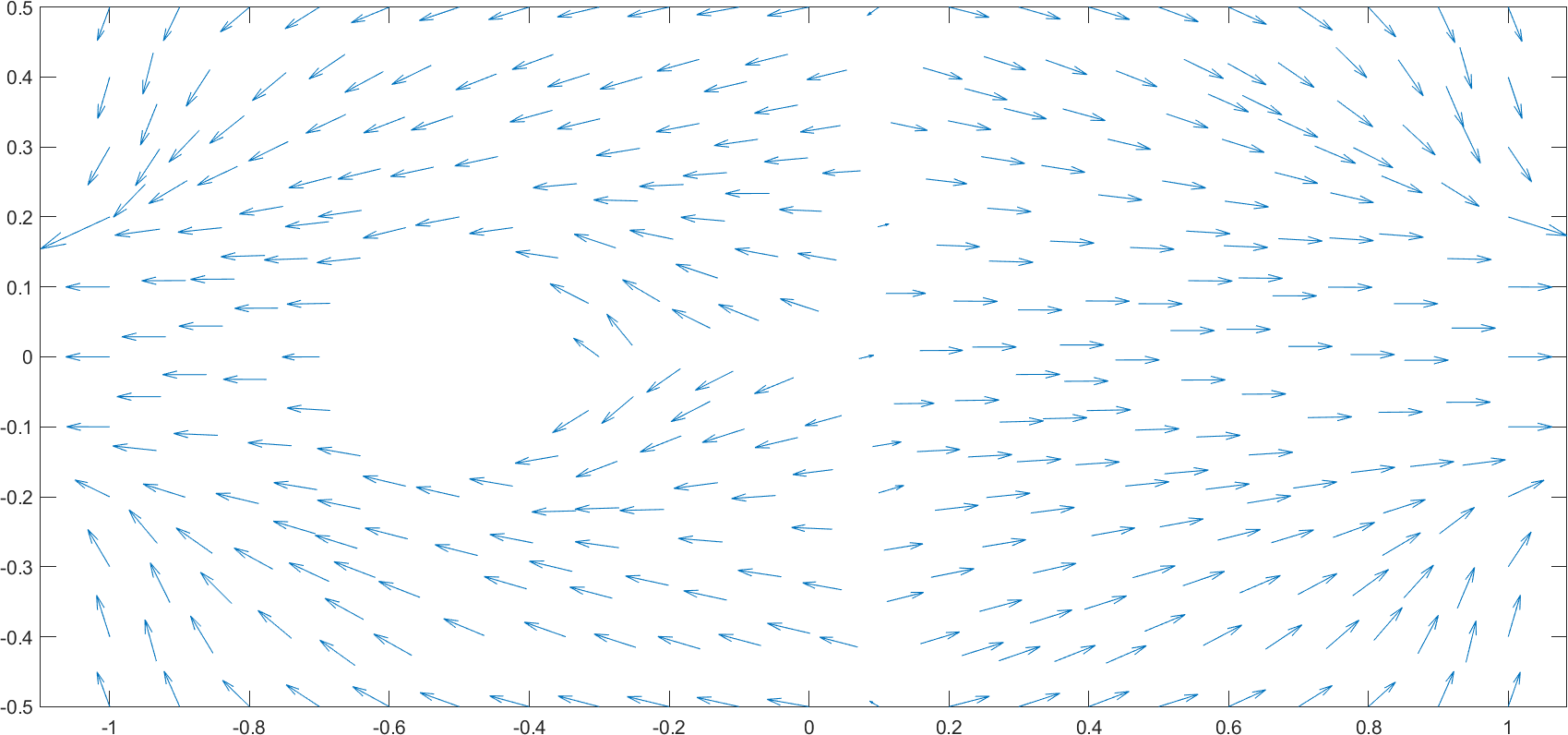}
\caption{Quiver plot of $-Du$ at time $T=3$.}\label{fig:test3_2}
\end{figure}

\bibliographystyle{plain}
\bibliography{bibFP}

\begin{thebibliography}{10}

\bibitem{MR2034614}
R.~Abgrall.
\newblock Numerical discretization of boundary conditions for first order
  {H}amilton-{J}acobi equations.
\newblock {\em SIAM J. Numer. Anal.}, 41(6):2233--2261, 2003.

\bibitem{AF12}
Y.~Achdou and M.~Falcone.
\newblock A semi-lagrangian scheme for mean curvature motion with nonlinear
  neumann conditions.
\newblock {\em Interfaces Free Bound.}, 14(4):455--485, 2012.

\bibitem{BardiCapuzzo96}
M.~Bardi and I.~Capuzzo Dolcetta.
\newblock {\em Optimal control and viscosity solutions of
  {H}amilton-{J}acobi-{B}ellman equations}.
\newblock Birkauser, 1996.

\bibitem{Barles1993}
G.~Barles.
\newblock Fully nonlinear {N}eumann type boundary conditions for second-order
  elliptic and parabolic equations.
\newblock {\em J. Differential Equations}, 106(1):90--106, 1993.

\bibitem{MR1685618}
G.~Barles.
\newblock Nonlinear {N}eumann boundary conditions for quasilinear degenerate
  elliptic equations and applications.
\newblock {\em J. Differential Equations}, 154(1):191--224, 1999.

\bibitem{MR1090787}
G.~Barles and P.-L. Lions.
\newblock Fully nonlinear {N}eumann type boundary conditions for first-order
  {H}amilton-{J}acobi equations.
\newblock {\em Nonlinear Anal.}, 16(2):143--153, 1991.

\bibitem{BS91}
G.~Barles and P.~E. Souganidis.
\newblock Convergence of approximation schemes for fully nonlinear second order
  equations.
\newblock {\em Asymptotic Anal.}, 4(3):271--283, 1991.

\bibitem{barrett_elliott_1986}
J.~W. Barrett and C.~M. Elliott.
\newblock Finite element approximation of the {D}irichlet problem using the
  boundary penalty method.
\newblock {\em Numer. Math.}, 49(4):343--366, 1986.

\bibitem{bonaventura:2018}
L.~Bonaventura, R.~Ferretti, and L.~Rocchi.
\newblock A fully semi-{L}agrangian discretization for the 2{D}
  {N}avier-{S}tokes equations in the vorticity--streamfunction formulation.
\newblock 323:132--144, 2018.

\bibitem{BCCF2020}
Luca Bonaventura, Elisa Calzola, Elisabetta Carlini, and Roberto Ferretti.
\newblock Second order fully semi-lagrangian discretizations of
  advection-diffusion-reaction systems.
\newblock {\em J. Sci. Comput.}, 88(1):Paper No. 23, 29, 2021.

\bibitem{MR2421330}
B.~Bouchard.
\newblock Optimal reflection of diffusions and barrier options pricing under
  constraints.
\newblock {\em SIAM J. Control Optim.}, 47(4):1785--1813, 2008.

\bibitem{MR2399437}
M.~Bourgoing.
\newblock Viscosity solutions of fully nonlinear second order parabolic
  equations with {$L^1$} dependence in time and {N}eumann boundary conditions.
\newblock {\em Discrete Contin. Dyn. Syst.}, 21(3):763--800, 2008.

\bibitem{CamFal95}
F.~Camilli and M.~Falcone.
\newblock An approximation scheme for the optimal control of diffusion
  processes.
\newblock {\em RAIRO Mod\'el. Math. Anal. Num\'er.}, 29(1):97--122, 1995.

\bibitem{CFF10}
E.~Carlini, M.~Falcone, and R.~Ferretti.
\newblock Convergence of a large time-step scheme for mean curvature motion.
\newblock {\em Interfaces Free Bound.}, 12(4):409--441, 2010.

\bibitem{ciarlet}
P.~G. Ciarlet and J.-L. Lions, editors.
\newblock {\em Handbook of numerical analysis. {V}ol. {II}}.
\newblock Handbook of Numerical Analysis, II. North-Holland, Amsterdam, 1991.
\newblock Finite element methods. Part 1.

\bibitem{CrandallIshiiLions92}
M.~G. Crandall, H.~Ishii, and P.-L. Lions.
\newblock User's guide to viscosity solutions of second order partial
  differential equations.
\newblock {\em Bull. Amer. Math. Soc. (N.S.)}, 27(1):1--67, 1992.

\bibitem{MR3042570}
K.~Debrabant and E.~R. Jakobsen.
\newblock Semi-{L}agrangian schemes for linear and fully non-linear diffusion
  equations.
\newblock {\em Math. Comp.}, 82(283):1433--1462, 2013.

\bibitem{DeckelnickHinze2007}
K.~Deckelnick and M.~Hinze.
\newblock Convergence of a finite element approximation to a state-constrained
  elliptic control problem.
\newblock {\em SIAM J. Numer. Anal.}, 45(5):1937--1953, 2007.

\bibitem{falconeferretilibro}
M.~Falcone and R.~Ferretti.
\newblock {\em Semi-{L}agrangian {A}pproximation {S}chemes for {L}inear and
  {H}amilton-{J}acobi {E}quations}.
\newblock MOS-SIAM Series on Optimization, 2013.

\bibitem{MR3049920}
X.~Feng, R.~Glowinski, and M.~Neilan.
\newblock Recent developments in numerical methods for fully nonlinear second
  order partial differential equations.
\newblock {\em SIAM Rev.}, 55(2):205--267, 2013.

\bibitem{MR1814364}
D.~Gilbarg and N.~S. Trudinger.
\newblock {\em Elliptic partial differential equations of second order}.
\newblock Classics in Mathematics. Springer-Verlag, Berlin, 2001.
\newblock Reprint of the 1998 edition.

\bibitem{gobet2001}
E.~Gobet.
\newblock Efficient schemes for the weak approximation of reflected diffusions.
\newblock volume~7, pages 193--202. 2001.
\newblock Monte Carlo and probabilistic methods for partial differential
  equations (Monte Carlo, 2000).

\bibitem{MR3587374}
K.~Hinderer, U.~Rieder, and M.~Stieglitz.
\newblock {\em Dynamic optimization}.
\newblock Universitext. Springer, Cham, 2016.
\newblock Deterministic and stochastic models.

\bibitem{MR2070626}
H.~Ishii and M.-H. Sato.
\newblock Nonlinear oblique derivative problems for singular degenerate
  parabolic equations on a general domain.
\newblock {\em Nonlinear Anal.}, 57(7-8):1077--1098, 2004.

\bibitem{MR667669}
P.-L. Lions.
\newblock {\em Generalized solutions of {H}amilton-{J}acobi equations},
  volume~69 of {\em Research Notes in Mathematics}.
\newblock Pitman (Advanced Publishing Program), Boston, Mass.-London, 1982.

\bibitem{MR709164}
P.-L. Lions.
\newblock Optimal control of diffusion processes and
  {H}amilton-{J}acobi-{B}ellman equations. {I}. {T}he dynamic programming
  principle and applications.
\newblock {\em Comm. Partial Differential Equations}, 8(10):1101--1174, 1983.

\bibitem{Lions1985}
P.-L. Lions.
\newblock Neumann type boundary conditions for {H}amilton-{J}acobi equations.
\newblock {\em Duke Math. J.}, 52(4):793--820, 1985.

\bibitem{Milstein96}
G.~N. Milstein.
\newblock Application of the numerical integration of stochastic equations for
  the solution of boundary value problems with {N}eumann boundary conditions.
\newblock {\em Teor. Veroyatnost. i Primenen.}, 41(1):210--218, 1996.

\bibitem{milstein:2001}
G.N. Milstein and M.V. Tretyakov.
\newblock Numerical solution of the {D}irichlet problem for nonlinear parabolic
  equations by a probabilistic approach.
\newblock {\em IMA Journal of Numerical Analysis}, 21:887--917, 2001.

\bibitem{MR3653852}
M.~Neilan, A.~J. Salgado, and W.~Zhang.
\newblock Numerical analysis of strongly nonlinear {PDE}s.
\newblock {\em Acta Numer.}, 26:137--303, 2017.

\bibitem{MR1181342}
E.~Rouy.
\newblock Numerical approximation of viscosity solutions of first-order
  {H}amilton-{J}acobi equations with {N}eumann type boundary conditions.
\newblock {\em Math. Models Methods Appl. Sci.}, 2(3):357--374, 1992.

\end{thebibliography}
\section{Appendix}
In this appendix we first study the existence of the projection of $x$  onto $\partial \OO$ parallel to $\gamma_b$ in a neighborhood of $\partial \OO$  and for  $b\in B$. These projections play an important role in the construction of our scheme in Sect.~\ref{fully_discrete_scheme}. The  following result is an extension of a result in \cite[Section~1.2]{gobet2001} to the regularity that we assume in this paper and, more importantly, to the dependence of $\gamma$ on $b$.   Recall that in {\bf(H3)}  $\partial\OO$ is assumed to be of class $C^3$. However, the result in Proposition \ref{oblique_projection_well_defined} below is also valid if $\partial\OO$ is only of class $C^2$. 
\begin{proposition}\label{oblique_projection_well_defined}
	There exists $R > 0$ such that, for any $x\in\RR^{N}$ satisfying $d(x,\partial \OO)  < R$ and for any $b \in B$, there exist a unique $p^{\gamma_b}(x) \in \partial \OO$ and a unique $d^{\gamma_b}(x) \in \RR$ such that 
	\begin{equation} \label{eq:def_proj-dist}
		x = p^{\gamma_b}(x) + d^{\gamma_b}(x) \gamma_{b}(p^{\gamma_b}(x)) .
	\end{equation}
	The mappings $(x,b)\mapsto p^{\gamma_b}(x)$ and $(x,b)\mapsto d^{\gamma_b}(x)$, called respectively the projection onto $\partial \OO$ parallel to $\gamma_b$
	and the algebraic distance to $\partial \OO$ parallel to $\gamma_b$, are of class $C^1$.
\end{proposition}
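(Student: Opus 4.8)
The plan is to realize $p^{\gamma_b}$ and $d^{\gamma_b}$ as the two components of the inverse of the $C^1$ map
\[
\Phi\colon \partial\OO \times \RR \times B \to \RR^N, \qquad \Phi(y,d,b) = y + d\,\gamma_b(y),
\]
near the zero section $\{d=0\}$, where $\Phi(y,0,b)=y$. Since $\OO$ is bounded with $C^2$ boundary, $\partial\OO$ is a compact $C^2$ hypersurface, so locally around each $y_0\in\partial\OO$ I can fix a $C^2$ parametrization $\omega\mapsto q(\omega)$, with $\omega$ in an open subset of $\RR^{N-1}$, whose partial derivatives span the tangent space $T_y\partial\OO$. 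In these coordinates $\Phi$ becomes a $C^1$ map of $(\omega,d)$ depending $C^1$ on the parameter $b$, and solving $\Phi=x$ for $(\omega,d)$ is exactly the sought representation \eqref{eq:def_proj-dist}.

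First I would check that the differential of $\Phi$ in $(y,d)$ at a point $(y,0,b)$ is invertible. At $d=0$ this differential sends $(v,\dot d)\in T_y\partial\OO\times\RR$ to $v+\dot d\,\gamma_b(y)$. Choosing an orthonormal basis $e_1,\dots,e_{N-1}$ of $T_y\partial\OO$ and completing it by $n(y)$, the matrix of this map has columns $e_1,\dots,e_{N-1},\gamma_b(y)$, so its determinant equals $\langle n(y),\gamma_b(y)\rangle$. By assumption {\bf(H3)} this quantity is positive, and since it is continuous on the compact set $\partial\OO\times B$ it is bounded below by some $\nu>0$; together with the uniform $C^1$ bounds on $\gamma$ and on the chosen parametrizations (again by compactness), this gives a lower bound on the invertibility of $D_{(y,d)}\Phi$ that is uniform in $(y,b)$.

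With this uniform bound in hand, a quantitative version of the inverse function theorem with the parameter $b$ yields radii $\rho,R>0$, independent of $(y_0,b_0)$, such that $\Phi(\cdot,\cdot,b)$ restricted to a $\rho$-neighborhood of $(y_0,0)$ is a $C^1$ diffeomorphism onto its image, which contains the ball of radius $R$ about $y_0$, and whose inverse depends $C^1$ jointly on $(x,b)$. This produces local $C^1$ solutions $(p^{\gamma_b},d^{\gamma_b})$ of \eqref{eq:def_proj-dist}. To upgrade these local inverses to a single well-defined pair on the whole collar $(\partial\OO)_R$, I would establish global injectivity by a standard tubular-neighborhood argument: if uniqueness failed one could find $x_n$ with $d(x_n,\partial\OO)\to 0$ admitting two distinct representations $y_n+d_n\gamma_{b_n}(y_n)=y_n'+d_n'\gamma_{b_n}(y_n')$ with $y_n\neq y_n'$; extracting convergent subsequences (using compactness of $\partial\OO$ and of $B$) and passing to the limit would force the two branches to collide at a common boundary point, contradicting the local injectivity just proved. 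Shrinking $R$ if necessary then makes $p^{\gamma_b}$ and $d^{\gamma_b}$ single-valued and $C^1$ on $(\partial\OO)_R\times B$.

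The main obstacle is not any single estimate but securing the two uniformities simultaneously: the radius $R$ must be independent of both $y\in\partial\OO$ and $b\in B$, and the local diffeomorphisms must patch into a globally injective map on the collar. Both are resolved by exploiting the compactness of $\partial\OO$ (guaranteed by {\bf(H1)}) and of $B$, together with the strict transversality in {\bf(H3)}; the only real care needed is to track the joint continuous dependence on the parameter $b$ throughout the implicit-function-theorem argument, so that the resulting maps $(x,b)\mapsto p^{\gamma_b}(x)$ and $(x,b)\mapsto d^{\gamma_b}(x)$ are indeed $C^1$ in all variables.
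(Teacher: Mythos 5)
Your proposal is correct and follows essentially the same route as the paper's proof: locally parametrize $\partial\OO$, observe that the differential of $(y,d)\mapsto y+d\,\gamma_b(y)$ at the zero section is invertible because $\langle n(y),\gamma_b(y)\rangle>0$ by {\bf(H3)}, apply the inverse function theorem carrying $b$ as a parameter, and use compactness of $\partial\OO\times B$ to obtain a uniform collar width. Your uniqueness argument --- a contradiction sequence with two distinct representations colliding at a limit boundary point, violating local injectivity --- is exactly the paper's argument as well; the only cosmetic difference is that the paper secures joint $C^1$ dependence on $(x,b)$ by appending $b$ as an extra coordinate of the map $G^s$ and obtains uniformity via a finite subcover rather than a quantitative parametric inverse function theorem.
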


\begin{proof} We use the same outline and, as much as possible, the same notations than those in \cite{gobet2001}. 
	
Let us fix $(s,b_0) \in \partial \OO \times B$. Let $g^s \colon U^s \to \partial \OO$ be a $C^2$ parameterization of $\partial \OO$ in a neighborhood of $s$, with $U^s$ being an open subset of $\RR^{N-1}$, $z_0 \in U^s$, and $g^s(z_0) = s$. By {\bf(H3)} the function 
$$  U^s \times \RR \times \V \ni (z,\lambda,b)   \mapsto  G^s(z,\lambda,b) = ( g^s(z) + \lambda \gamma_{b}(g^s(z)),b) \in \RR^{N}\times \RR^{N_{B}}$$
is of class $C^1$.  The Jacobian matrix of $G^s$ has the form
$$
	 J^{s}(z,\lambda,b) = 
	\left(\begin{array}{@{}c|c@{}}
	J_{z,\lambda}(z, \lambda,b)
	& J_{b}(z, \lambda,b) \\
	\hline
	0_{N_B,N} & I_{N_B}
	\end{array}\right),
$$
	where $J_{z,\lambda}(z, \lambda,b)$ coincides with $J(z, \lambda)$ of the Appendix~A of \cite{gobet2001}, that is
	\[
	J_{z,\lambda}(z,\lambda,b) = 
	\left(\begin{array}{@{}c|c|c|c@{}}
	&&&\\
	\partial_{z_1}g^s(z) + \lambda \partial_{z_1}\gamma_b(g^s(z)) & \cdots 
	& \partial_{z_{N-1}}g^s(z) + \lambda \partial_{z_{N-1}}\gamma_b(g^s(z))
	& \gamma_b(g^s(z)) \\
	&&&\\
	\end{array}\right).
	\]
	In particular, for $\lambda = 0$,
	\[
	J_{z,\lambda}(z,0,b) = 
	\left(\begin{array}{@{}c|c|c|c@{}}
	&&&\\
	\partial_{z_1}g^s(z) & \cdots & \partial_{z_{N-1}}g^s(z) & \gamma_b(g^s(z)) \\
	&&&\\
	\end{array}\right)
	\]
	is invertible since its $N-1$ first columns span the tangent space to $\partial \OO$ at $g^s(z)$
	and, since 
	$$\langle n(g^s(z)), \gamma_b(g^s(z)) \rangle > 0,$$ 
	its last column is non tangent to $\partial \OO$.
	It follows that $ J^{s}(z,0,b)$ is also invertible, and we can therefore apply the inverse mapping theorem to
	$G^s$ at $(z_0,0,b_0)$ to obtain the existence of a neighborhood $V^{s,b_0}$ of $(s,b_0)$ and $C^1$ mappings $V^{s,b_0} \ni (x,b) \mapsto p^{\gamma_b}(x)\in \partial \OO$ and $V^{s,b_0} \ni (x,b)\mapsto d^{\gamma_b}(x)$ such that \eqref{eq:def_proj-dist} holds for every $(x,b)\in V^{s,b_0}$. The compactness of $\partial \OO \times B \subset \cup_{(s,b_0) \in \partial \OO \times B}V^{s,b_0}$
	enables to consider a finite number of $(s_i,(b_0)_i)$, $1\le i \le k$,
	such that $\partial \OO \times B \subset \cup_{i=1}^k V^{s_i,(b_0)_i}$. Then there exists $\bar R>0$ such that $\{ y \in \mathbb R^N \; | \;   d(y,\partial \OO) < \bar R \} \times B \subset \cup_{i=1}^k V^{s_i,(b_0)_i}$.
	In particular for any $x$ such that $d(x,\partial \OO) < \bar R$ and any $b \in B$, there exist a least a point $p^{\gamma_b}(x)$ and a scalar $d^{\gamma_b}(x)$ such that \eqref{eq:def_proj-dist} holds.
	We claim that there exists $R \in (0,\bar R)$ such that for any $x$ satisfying $d(x,\partial \OO) < R$ and any $b \in B$, $p^{\gamma_b}(x)$ is unique (and as a consequence $d^{\gamma_b}(x)$ is also unique). 
	Assume that this is not the case. Then (considering for example $R =\frac{1}{k}$) one can build a sequence $(x_k, b_k)_{k\in \NN}$ converging (after extraction a subsequence) to some point $(\hat{s},\hat{b}) \in \partial \OO \times B$ 
	and such that for all $k\in \NN$,
	$x_k$ has two distinct projections $p^{\gamma_{b_k}}_i(x_k)$ with associated algebraic distances $d^{\gamma_{b_k}}_i(x_k)$, $i=1,2$.
	At the limit point $\hat{s}$, we consider $G^{\hat{s}}$ which is a local diffeomorphism on a neighborhood of $(\hat{z},0,\hat{b})$ (with $g^{\hat{s}}(\hat{z})=\hat{s}$).
	Since $x_k \to \hat{s} \in \partial \OO$, then $p^{\gamma_{b_k}}_i(x_k) \to \hat{s}$ and $d^{\gamma_{b_k}}_i(x_k) \to 0$, $i=1,2$.
	Let $z_{i,k}$ be such that $g^{\hat{s}}(z_{i,k}) = p^{\gamma_{b_k}}_i(x_k)$ and $\lambda_{i,k} = d^{\gamma_{b_k}}_i(x_k)$, $i=1,2$.
	Then $(z_{i,k},\lambda_{i,k},b_k)_k$, $i=1,2$, are distinct sequences that both converge to $(\hat{z},0,\hat{b})$ and have the same image $G^{\hat{s}}(z_{i,k},\lambda_{i,k},b_k) = (x_k,b_k)$. This contradicts that $G^{\hat{s}}$ is a local diffeomorphism on a neighborhood of $(\hat{z},0,\hat{b})$.	
\end{proof}

For any $\eps \geq 0$ let us define
\begin{align}
D_\eps = \{ x \in \overline \OO \; | \; d(x, \partial \OO) > \eps \},\label{eq:internalset}\\
\partial D_\eps = \{ x \in \overline \OO  \; | \; d(x, \partial \OO)  =  \eps\},\label{partial_o_delta}\\
L_\eps =  \{ x \in \overline \OO \; | \; d(x, \partial \OO)  \le  \eps \}.\label{layer}
\end{align}

Now we focus on the existence of projections of $x\in L_{\eps}$ onto $\partial D_{\eps}$ and the regularity of $L_{\eps} \ni x \mapsto d(x,D_{\eps}) \in \RR$. These results are important in order to show Lemma \ref{lemma:milstein_stab_d} which is the key to obtain the stability of the scheme in Proposition \ref{stability}.

\begin{lemma} \label{lemma:dist1}
The following hold:
\begin{enumerate}
\item[{\rm(i)}]  There exists $\eta>0$ such that  on $L_\eta$, the projection $p_{\partial \OO}$ onto $\partial \OO$  is well-defined and $C^1$.
\item[{\rm(ii)}]  The distance function $L_\eta \ni x \mapsto d(x, \partial \OO)\in \RR$ is $C^3$, and $Dd(\cdot, \partial \OO)(x) = - n(p_{\partial \OO}(x))$.
\end{enumerate}
Let $\delta \in [0,\eta]$. Then the following hold:
\begin{enumerate}
\item[{\rm(iii)}]  $\partial D_\delta$ is of class $C^3$ and, denoting by $n_\delta(x)$ the unit outward normal at $x\in \partial D_\delta$, we have $n_{\delta}(x)=n(p_{\partial \OO}(x))$. 
\item[{\rm(iv)}] For every $x\in L_\delta$, $p=p_{\partial \OO}(x)-\delta n(p_{\partial \OO}(x))$ is a projection of $x$ onto $\partial D_\delta$. 
\item[{\rm(v)}] The function $x \mapsto d(x, \partial D_\delta)$ is of class $C^3$ on $L_\delta$ and $d(x,\partial \OO)+ d(x,\partial D_{\delta})=\delta$ for every $x\in L_{\delta}$. 
\end{enumerate}
\end{lemma}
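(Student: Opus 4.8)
The plan is to reduce the whole lemma to the classical tubular-neighbourhood description of a $C^3$ hypersurface, together with two elementary facts: that $d(\cdot,\partial\OO)$ is $1$-Lipschitz, and that a level set of a $C^3$ function with non-vanishing gradient is a $C^3$ manifold.

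For {\rm(i)}--{\rm(ii)} I would consider the normal map $\Phi(y,t)=y-t\,n(y)$ on $\partial\OO\times\RR$. Since $\partial\OO$ is of class $C^3$, the outward normal field $n$ is $C^2$, so $\Phi$ is $C^2$; computing $D\Phi$ at $t=0$ shows that its columns span the tangent space of $\partial\OO$ together with the transversal direction $n(y)$, hence $D\Phi(y,0)$ is invertible. By the inverse function theorem and the compactness of $\partial\OO$ one gets a uniform $\eta>0$ for which $\Phi$ is a $C^2$ diffeomorphism of $\partial\OO\times(-\eta,\eta)$ onto a neighbourhood of $\partial\OO$; its inverse furnishes the well-defined (hence $C^1$) projection $p_{\partial\OO}$ on $L_\eta$. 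This is the argument of Proposition~\ref{oblique_projection_well_defined} specialised to the direction field $\gamma_b=n$. The sharper claim that $d(\cdot,\partial\OO)$ is $C^3$, one degree more regular than $p_{\partial\OO}$, I would invoke from the classical regularity theory of the distance function to a $C^3$ boundary (see e.g.\ \cite{gobet2001}); the formula $Dd(\cdot,\partial\OO)(x)=-n(p_{\partial\OO}(x))$ then follows either by differentiating $|Dd|^2=1$, or directly from $x=p_{\partial\OO}(x)-d(x,\partial\OO)\,n(p_{\partial\OO}(x))$.

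For {\rm(iii)}, fix $\delta\in[0,\eta]$ and note that $\partial D_\delta=\{x\in\ov\OO\,|\,d(x,\partial\OO)=\delta\}$ is the $\delta$-level set of the $C^3$ function $d(\cdot,\partial\OO)$, whose gradient has unit norm on $L_\eta$ by {\rm(ii)}; the regular value theorem then gives that $\partial D_\delta$ is a $C^3$ hypersurface. Since $D_\delta=\{d(\cdot,\partial\OO)>\delta\}$, its outward normal points in the direction of decreasing $d$, i.e.\ along $-Dd(\cdot,\partial\OO)=n(p_{\partial\OO}(\cdot))$, which identifies $n_\delta(x)=n(p_{\partial\OO}(x))$. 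The core of the remaining work is {\rm(iv)}, from which {\rm(v)} is immediate. Given $x\in L_\delta$ with $r:=d(x,\partial\OO)\le\delta$, I would write $x=p_{\partial\OO}(x)-r\,n(p_{\partial\OO}(x))$ and observe that $p=p_{\partial\OO}(x)-\delta\,n(p_{\partial\OO}(x))=\Phi(p_{\partial\OO}(x),\delta)$ lies in $L_\eta$ with $d(p,\partial\OO)=\delta$, hence $p\in\partial D_\delta$, and $|x-p|=\delta-r$. The key point is the lower bound: since $d(\cdot,\partial\OO)$ is $1$-Lipschitz, for every $w\in\partial D_\delta$ one has $|x-w|\ge|d(w,\partial\OO)-d(x,\partial\OO)|=\delta-r$, so $p$ realises the minimal distance and is a projection of $x$ onto $\partial D_\delta$. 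This same computation yields $d(x,\partial D_\delta)=\delta-r=\delta-d(x,\partial\OO)$, which is the additive identity in {\rm(v)}; and since $d(\cdot,\partial\OO)$ is $C^3$ on $L_\eta\supseteq L_\delta$ by {\rm(ii)}, the function $x\mapsto d(x,\partial D_\delta)=\delta-d(x,\partial\OO)$ is $C^3$ on $L_\delta$ as well.

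The main obstacle is the $C^3$ regularity of the distance function in {\rm(ii)}, which is genuinely one order higher than what the inverse function theorem gives for the projection $p_{\partial\OO}$; rather than reprove it I would cite the classical result. Everything else is soft: the identification of the normal to the parallel surface $\partial D_\delta$ and the projection statement {\rm(iv)} hinge only on the eikonal structure $|Dd|=1$ and the $1$-Lipschitz bound, so no delicate estimate is required there.
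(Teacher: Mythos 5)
Your proposal is correct and follows essentially the same route as the paper: parts {\rm(i)}--{\rm(ii)} are obtained from the classical tubular-neighbourhood/distance-function regularity (the paper cites \cite[Lemma~14.16]{MR1814364}), part {\rm(iii)} is the level-set observation, and parts {\rm(iv)}--{\rm(v)} rest on showing $p\in\partial D_\delta$, the upper bound $|x-p|=\delta-d(x,\partial\OO)$, and the matching lower bound. Your direct use of the $1$-Lipschitz property of $d(\cdot,\partial\OO)$ for the lower bound is just the unwound form of the paper's triangle-inequality contradiction argument, so the two proofs coincide in substance.
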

\begin{proof}{\rm(i)}\&{\rm(ii)} See \cite[Lemma~14.16]{MR1814364}.\smallskip\\
{\rm(iii)} This follows  from {\rm(ii)} and \eqref{partial_o_delta}. \smallskip\\
{\rm(iv)}\&{\rm(v)} Let us first show that $p\in \partial D_{\delta}$. We have $d(p,\partial \OO)\leq |p -p_{\partial \OO}(x)|= \delta$. Thus, $p\in L_{\delta}$ and, by {\rm(i)},  $p_{\partial \OO}(x)=p_{\partial \OO}(p)$, which implies that $d(p,\partial \OO)= \delta$ and hence $p\in \partial D_\delta$. Since 
$$x = p_{\partial \OO}(x) - d(x,\partial \OO) n(p_{\partial \OO}(x)),$$
we obtain $d(x,\partial D_\delta)\leq |p-x|=\delta-d(x,\partial \OO)$. Assume that $d(x, \partial D_\delta) < \delta - d(x, \partial \OO)$. Then there exists $p' \in \partial D_\delta$ such that $| x - p' | < \delta - d(x, \partial \OO)$. This implies that
$$
	\delta=d(p',\partial \OO) \le  | p' - p_{\partial \OO}(x)   | \le  | p' - x  | +  |x - p_{\partial \OO}(x)   | <  \delta,
$$
which is impossible. Thus 	
$$|p-x|=d(x, \partial D_\delta) = \delta - d(x, \partial \OO).$$
The first equality above implies that $p$ is a projection of $x$ onto $\partial D_\delta$. Since $x\in L_{\delta}$ is arbitrary, the second equality above and {\rm(ii)} imply that {\rm(v)} holds. 
\end{proof}
\end{document}